\newcommand{\diagdots}[3][-25]{%
	\rotatebox{#1}{\makebox[0pt]{\makebox[#2]{\xleaders\hbox{$\cdot$\hskip#3}\hfill\kern0pt}}}%
}
\renewcommand{\hat}{\widehat}
\renewcommand{\tilde}{\widetilde}
\newcommand\ri{\ensuremath{\mathrm{i}}}
\newcommand\bcL{\ensuremath{\boldsymbol{\mathcal{L}}}}
\newcommand\bA{\ensuremath{\mathbf{A}}}
\newcommand\bE{\ensuremath{\mathbf{E}}}
\newcommand\bB{\ensuremath{\mathbf{B}}}
\newcommand\bC{\ensuremath{\mathbf{C}}}
\newcommand\bM{\ensuremath{\mathbf{M}}}
\newcommand\bN{\ensuremath{\mathbf{N}}}
\newcommand\bJ{\ensuremath{\mathbf{J}}}
\newcommand\bQ{\ensuremath{\mathbf{Q}}}
\newcommand\bP{\ensuremath{\mathbf{P}}}
\newcommand\bW{\ensuremath{\mathbf{W}}}
\newcommand\bT{\ensuremath{\mathbf{T}}}
\newcommand\bF{\ensuremath{\mathbf{F}}}
\newcommand\bS{\ensuremath{\mathbf{S}}}
\newcommand\bR{\ensuremath{\mathbf{R}}}
\newcommand\bU{\ensuremath{\mathbf{U}}}
\newcommand\bV{\ensuremath{\mathbf{V}}}
\newcommand\bz{\ensuremath{\mathbf{z}}}
\newcommand\by{\ensuremath{\mathbf{y}}}
\newcommand\be{\ensuremath{\mathbf{e}}}
\newcommand\br{\ensuremath{\mathbf{r}}}
\newcommand\bu{\ensuremath{\mathbf{u}}}
\newcommand\bx{\ensuremath{\mathbf{x}}}
\newcommand\bb{\ensuremath{\mathbf{b}}} 
\newcommand\bZ{\ensuremath{\mathbf{Z}}}
\newcommand\bY{\ensuremath{\mathbf{Y}}}
\newcommand\bSigma{\ensuremath{\boldsymbol{\Sigma}}}
\newcommand\bPil{\ensuremath{\boldsymbol{\Pi}_{\mathrm{l}}}}
\newcommand\bPir{\ensuremath{\boldsymbol{\Pi}_{\mathrm{r}}}}
\DeclareMathOperator{\im}{im}
\DeclareMathOperator{\vect}{vec}
\newcounter{mymac@matlab}
\newcommand{\matlab}{MATLAB%
	\ifnum\value{mymac@matlab}<1%
	\textsuperscript{\textregistered}%
	\setcounter{mymac@matlab}{1}%
	\fi%
}
\theoremstyle{plain}
\newtheorem{theorem}{Theorem}[section]
\newtheorem{lemma}[theorem]{Lemma}
\newtheorem{proposition}[theorem]{Proposition}
\theoremstyle{definition}
\newtheorem{definition}[theorem]{Definition}
\newtheorem{assumption}[theorem]{Assumption}
\theoremstyle{remark}
\newtheorem{remark}{Remark}
\begin{document}
  

\title{Model Reduction of Parametric Differential-Algebraic Systems by Balanced Truncation}
  
\author[$\ast$]{J. Przybilla}
\affil[$\ast$]{Max Planck Institute for Dynamics of Complex Technical Systems, Sandtorstra{\ss}e 1, 39106 Magdeburg, Germany.\authorcr
  \email{przybilla@mpi-magdeburg.mpg.de}, \orcid{0000-0002-8703-8735}}
  
\author[$\dagger$]{M. Voigt}
\affil[$\dagger$]{UniDistance Suisse, Schinerstrasse 18, 3900 Brig, Switzerland.\authorcr
  \email{matthias.voigt@fernuni.ch}, \orcid{0000-0001-8491-1861}}
  
\shorttitle{MOR of Parametric DAE Systems by Balanced Truncation}
\shortauthor{J. Przybilla, M. Voigt}
\shortdate{}
  
\keywords{model order reduction, reduced basis method, balanced truncation, error estimation, parameter-dependency, differential-algebraic equations}

\msc{41A20, 65F45, 65F55, 65L80, 70G60, 76D05, 93A15}
  
\abstract{%
We deduce a procedure to apply balanced truncation to parameter-depen\-dent differential-algebraic systems.
For that we solve multiple projected Lyapunov equations for different parameter values to compute the Gramians that are required for the truncation procedure.
As this process would lead to high computational costs if we perform it for a large number of parameters, we combine this approach with the reduced basis method that determines a reduced representation of the Lyapunov equation solutions for the parameters of interest. Residual-based error estimators are then used to evaluate the quality of the approximations.
After introducing the procedure for a general class of differential-algebraic systems we turn our focus to systems with a specific structure, for which the method can be applied particularly efficiently.
We illustrate the efficiency of our approach on several models from fluid dynamics and mechanics.
}

\novelty{We propose a new method that reduces parametric differential-algebraic equations by combining projection methods for differential-algebraic equations and the reduced basis methods for Lyapunov equations. To apply the reduced basis method new error estimators are invented. The performance of our new method is illustrated for several examples.}

\maketitle


\section{Introduction}\label{sec:intro}
In the modeling of various industrial processes one often obtains systems of differential-algebraic equations (DAEs) that are of high dimension.
Typical examples are electrical circuits, thermal and diffusion processes, multibody systems, and specific discretized partial differential equations \cite{Campbell1980, Brenan1995}.
Often, these systems are further dependent on parameters that can describe physical properties such as material constants and which are often subject to optimization.
The resulting parameter-dependent differential-algebraic systems have the form
\begin{subequations}\label{eq:FOM}
\begin{align} 
\frac{\mathrm{d}}{\mathrm{d}t}\bE(\mu){\bz}(t) &= \bA(\mu)\bz(t) + \bB(\mu)\bu(t),\label{eq:FOM_1}\\
\by(t) &= \bC(\mu)\bz(t),\label{eq:FOM_2}
\end{align}
\end{subequations}
where $\bE(\mu),\, \bA(\mu)\in \mathbb{R}^{N, N}$, $\bB(\mu)\in\mathbb{R}^{N, 
m}$, and $\bC(\mu)\in\mathbb{R}^{p,N}$ are dependent on parameters 
$\mu\in\mathcal{D}$, where $\mathcal{D}\subset\mathbb{R}^{d}$.
The input, the state, and the output are denoted by $\bu(t)\in\mathbb{R}^{m}$, 
$\bz(t)\in\mathbb{R}^{N}$ and $\by(t)\in\mathbb{R}^p$ for $t \in [0,\infty)$.
The matrix $\bE(\mu)$ can be a singular. 
However, throughout this paper, we assume \emph{uniform regularity}, i.\,e., the pencil $s\bE(\mu)- \bA(\mu)$, which is a matrix polynomial in the variable $s$, is assumed to be regular for all $\mu \in 
\mathcal{D}$, that is $\det(s\bE(\mu)- \bA(\mu))$ is not the zero polynomial for 
all $\mu \in \mathcal{D}$. Moreover, we assume that the system under consideration is \emph{uniformly asymptotically stable}, i.\,e., the system (or the pencil $s\bE(\mu)-\bA(\mu)$) is asymptotically stable for all $\mu \in \mathcal{D}$, meaning that all finite eigenvalues of the pencil $s\bE(\mu)- \bA(\mu)$ have negative real part for all $\mu \in \mathcal{D}$.
Throughout the derivations provided in this work we assume 
that $\bE(\cdot)$, $\bA(\cdot)$, $\bB(\cdot)$, 
and $\bC(\cdot)$ are \emph{affine in the parameter $\mu$}, i.\,e.,
\begin{align}\label{eq:affinedeco}
\begin{split}
	\bE(\mu)  &= \bE_0 + \sum_{k=1}^{n_{\bE}}\Theta_{k}^{\bE}(\mu)\bE_{k},  
\quad     \bA(\mu)  = \bA_0 + \sum_{k=1}^{n_{\bA}}\Theta_{k}^{\bA}(\mu)\bA_{k}, 
\\
\bB(\mu)  &= \bB_0 + \sum_{k=1}^{n_{\bB}}\Theta_{k}^{\bB}(\mu)\bB_{k},  
\quad     \bC(\mu)  = \bC_0 + \sum_{k=1}^{n_{\bC}}\Theta_{k}^{\bC}(\mu)\bC_{k},
\end{split}
\end{align}
where all 
$\Theta_{k}^{\bE},\,\Theta_{k}^{\bA},\,\Theta_{k}^{\bB},\,\Theta_{k}^{\bC}
:\mathcal{D} \to \mathbb{R}$ are parameter-dependent continuous functions
and 
$\bE_{k}$, $\bA_{k}$, $\bB_k$, $\bC_k$ are parameter-independent matrices. 
For reasons of computational efficiency, we always assume that 
$n_{\bE},\,n_{\bA},\,n_{\bB},\,n_{\bC} \ll N$ and we also assume that the number of inputs and outputs is small compared to the dimension of the states, i.\,e., $m,\,p \ll N$. 
For ease of notation, it is sometimes handy to put the matrices $\bE_0$, $\bA_0$, $\bB_0$, and $\bC_0$ into 
the sums in \eqref{eq:affinedeco} and multiply them with the factors 
$\Theta_{0}^{\bE} = \Theta_{0}^{\bA} = \Theta_{0}^{\bB} = \Theta_{
0}^{\bC} \equiv 1$. The model \eqref{eq:FOM} is also referred to as the 
full-order model (FOM).
Often, one also considers the input/output mapping of the system in the frequency domain. This relation is typically expressed by the \emph{transfer function}
$
\boldsymbol{\mathcal{G}}(\mu, s):=\bC(\mu)(s\bE(\mu)-\bA(\mu))^{-1}\bB(\mu).
$

Because of the high state-space dimension $N$ of \eqref{eq:FOM} it is useful to apply reduction methods to extract the essential information of the system and its solution. More precisely, we want to determine a (uniformly regular and asymptotically stable) reduced-order model (ROM) 
\begin{align}\label{eq:DAE}
\begin{split}
\frac{\mathrm{d}}{\mathrm{d}t}\bE_{\mathrm{R}}(\mu){\bz}_{\mathrm{R}}(t) &= \bA_{\mathrm{R}}(\mu)\bz_{\mathrm{R}}(t) + \bB_{\mathrm{R}}(\mu)\bu(t),\\
\by_{\mathrm{R}}(t) &= \bC_{\mathrm{R}}(\mu)\bz_{\mathrm{R}}(t),
\end{split}
\end{align}
with $\bE_{\mathrm{R}}(\mu),\, \bA_{\mathrm{R}}(\mu)\in \mathbb{R}^{r, r}$, 
$\bB_{\mathrm{R}}(\mu)\in\mathbb{R}^{r, m}$, and 
$\bC_{\mathrm{R}}\in\mathbb{R}^{p,r}$, where $r\ll N$.
The reduced state and output are $\bz_{\mathrm{R}}(t)\in\mathbb{R}^{r}$ and 
$\by_{\mathrm{R}}(t)\in\mathbb{R}^{p}$. The aim is to determine the surrogate 
model in such a way that the output of the ROM well-approximates the output of 
the FOM for all admissible inputs $\bu(\cdot)$ and all parameters $\mu \in 
\mathcal{D}$. In terms of the transfer function this means that the error 
$\left\| \boldsymbol{\mathcal{G}}(\mu,\cdot) - 
\boldsymbol{\mathcal{G}}_{\mathrm{R}}(\mu,\cdot) \right\|$ is sufficiently small 
in an appropriate norm for all $\mu \in \mathcal{D}$, where 
$\boldsymbol{\mathcal{G}}_{\mathrm{R}}(\mu,s)$ denotes the transfer function of 
the ROM.

For parameter-independent problems, i.\,e.,
\begin{equation*}
\bE(\mu) \equiv \bE, \quad \bA(\mu) \equiv \bA, \quad \bB(\mu) \equiv \bB, \quad \bC(\mu) \equiv \bC, \quad \boldsymbol{\mathcal{G}}(\mu,s) \equiv \boldsymbol{\mathcal{G}}(s),
\end{equation*}
there are several classes of model order reduction techniques. 
Examples are singular value based approaches like balanced truncation \cite{morMoo81, morTomP87, morBenOCetal17} and Hankel norm approximations \cite{Glo84}.
Additionally, there are Krylov subspace-based methods such as the iterative 
rational Krylov algorithm (IRKA) \cite{morGugAB08, morGugSW13, morBenOCetal17, 
morFlaBG12} and moment matching as well as data-driven methods such as the 
Loewner framework \cite{morMayA07,morGosPA21b,morBeaGM22}.
Corresponding methods for parameter-dependent systems are introduced in \cite{morBenKTetal16, morBauBF14, Geuss2013}.

In this article we focus on balanced truncation which is one of the most 
popular reduction techniques. This is mainly due to the guaranteed asymptotic 
stability of the ROM, the existence of an error bound, and appealing numerical 
techniques for the involved Lyapunov equations \cite{DruS11, BenS13, 
Schmidt2018, morSonS17}. Within balanced truncation, Lyapunov equations 
corresponding to the original system need to be solved.
The solutions of these equations are called Gramians. They describe the input-to-state and state-to-output behavior and are used to construct projection matrices for the reduction.
The multiplication of the system matrices with these projection matrices then results in a ROM.

All the above-mentioned methods focus on the case $\bE = I_N$ and are, however, not directly applicable in the DAE case. Even if the problem is not parameter-dependent, there are several challenges that one has to face (here we assume for simplicity, that the problem is parameter-independent):
\begin{enumerate}[a)]
    \item Since the matrix $\bE$ is typically singular, the transfer function $\boldsymbol{\mathcal{G}}(s)$ is possibly improper, i.\,e., it may have a polynomial part which is unbounded for growing values of $|s|$. If the reduced transfer function $\boldsymbol{\mathcal{G}}_{\mathrm{R}}(s)$ does not match this polynomial part, then the error transfer function $\boldsymbol{\mathcal{G}}(s) - \boldsymbol{\mathcal{G}}_{\mathrm{R}}(s)$ is not an element of the rational Hardy spaces $\mathcal{RH}_\infty^{p , m}$ or $\mathcal{RH}_2^{p , m}$ (see, e.\,g, \cite{ZhoDG96} for a definition) and the output error cannot be bounded by the input norm. Thus, a model reduction scheme for DAEs must preserve the polynomial part of its transfer function. This is addressed in \cite{morMehS05,morSty04,morGugSW13}.
\item In balanced truncation, one has to solve two large-scale Lyapunov equations. In the DAE case, one has to consider a generalization of these -- so-called \emph{projected} Lyapunov equations. 
Thereby, the Lyapunov equations are projected onto the left and right deflating subspaces corresponding to the finite or infinite eigenvalues of the matrix pencil $s\bE - \bA$.
This involves specific projectors which are hard to form explicitly and that might destroy the sparsity structure of the coefficient matrices. 
However, for DAE systems of special structure, the projectors are of a particular form which can be exploited from the numerical point of view. 
For details, we refer to \cite{Sty08,  morHeiSS08, morSaaV18, morBenSU16}.
\end{enumerate}

Besides the approaches mentioned above, some of the issues in model reduction of DAEs are also addressed in \cite{morAliBSetal14,morBanAS16}. There, the authors introduce an index-aware model order reduction scheme that splits the DAE system into a differential and an algebraic part. However, in this method it is often difficult to maintain sparsity of the system matrices, and hence, it is in general not feasible for many systems with large dimensions. Also data-driven approaches have been recently extended to differential-algebraic systems, see, e.\,g., \cite{morAntGH20b,morGosZA20}. Finally, we would like to mention \cite{morBenS17} which provides an  overview of model order reduction for DAE systems.  


Different approaches for parametric model reduction have been developed as well. Besides others, there exist two main classes of such methods. First, one could determine \emph{global bases} for the projection spaces for the entire parametric model, e.\,g., by concatenating the local bases for different parameter values, see, e.\,g., \cite{morBauBBetal11}. Second, there exist approaches for constructing \emph{local bases} for a specific parameter value by combining local basis information of several prereduced models at different parameter values, e.\,g., by interpolating between different models \cite{Eid2011,Geuss2013}. We also refer to \cite{morBenGW15} for an extensive survey on such methods.

However, to the best of our knowledge, none of the techniques presented in \cite{morBenGW15} has already been extended to the case of DAE systems. Therefore, the motivation of this paper is to develop a first method for this purpose based on balanced trunction. If we aim to reduce a parameter-dependent system by balanced truncation in a naive fashion, we would have to solve the Lyapunov equations for each individual parameter of interest which is computationally prohibitive. 
To avoid these computational costs, for the case $\bE=I_N$, the authors in \cite{morSonGMS21} apply some interpolation techniques to approximate the Gramians for all admissible parameters.
The authors in \cite{morSonS17}, on the other hand, apply the reduced basis method which is a well-established method to reduce parameter-dependent partial differential equations \cite{morHesRS16, morQuaMN16}. Using this method, the authors determine the approximate subspaces in which the solutions of the corresponding Lyapunov equations are contained in. 

In this paper we extend the reduced basis method to the case of DAE systems where the parametric projected Lyapunov equations are only solved for few sampling parameters. 
Then, based on these solutions, a reduced subspace in which the Lyapunov equation solutions for all $\mu \in \mathcal{D}$ approximately live, is constructed. 
The latter steps form the computationally expensive \emph{offline phase}.
After that, using the reduced basis representation, the Lyapunov equations can be solved much more efficiently for all $\mu \in \mathcal{D}$ in the \emph{online phase}.
A crucial question in the offline phase is the choice of the sample parameters. Usually, a grid of test parameters is selected. For these, the error is quantified using a posteriori error estimators. Then new samples are taken at those parameters at which the error estimator estimates the largest error.

In this paper we generalize the reduced basis balanced truncation method of \cite{morSonS17} to differential-algebraic systems with a focus on structured systems from specific application areas. 
The main problems we solve in this paper are the following:
\begin{enumerate}[a)]
  \item We derive error estimators for parameter-dependent projected 
Lyapunov equations. These can be improved if the given system is uniformly 
strictly dissipative. Since this condition is not always satisfied, we discuss 
transformations to achieve this for systems of a particular index 3 structure. 
  \item We discuss in detail how the polynomial part of parameter-dependent 
transfer functions of DAE systems can be extracted by efficiently approximating 
their Markov parameters. 
  \item We apply this approach to several application problems and illustrate its effectiveness. 
\end{enumerate}

Our method also follows the paradigm of decomposing the procedure into an offline and online phase.
In the offline phase we approximate the image spaces of controllability and obervability Gramians that are the solutions of projected Lyapunov equations.
These approximate image spaces are then used in the online phase, to determine approximations of the Gramians, and hence, the reduced order model, efficiently for all required parameters.  
The online phase can be performed for any parameter $\mu \in \mathcal{D}$. Due to the reduced dimension obtained by the offline phase, this step is very fast and can be performed repeatedly for all required parameters. Moreover, in this work, we introduce new error estimators, which are used in the offline phase to assess the quality of the reduction.

This paper is organized as follows.
In Section \ref{sec:examples}, two model problems are introduced that motivate the method presented in this paper.
In Section \ref{sec:prelim}, we review existing concepts and approaches that will be used later in this paper. 
Thereby, in Subsection \ref{ChapterProj}, we recall projection techniques to decouple the differential and algebraic equations in \eqref{eq:FOM}. 
Afterwards, in Subsection \ref{sec:MOR}, we consider model order reduction by balanced truncation for DAE systems. We further address the numerical challenges that arise in computing the solutions of the required Lyapunov equations.
Since solving Lyapunov equations for every requested parameter leads to high 
computational costs, in Section~\ref{reduced basis method} the reduced basis method is 
presented, which was first applied to standard Lyapunov equations in 
\cite{morSonS17}.
We derive two error estimators for our method, one of them is motivated by the 
estimator from \cite{morSonS17}, the other one is an adaption of the estimator 
presented in \cite{Schmidt2018}. Here we also discuss the treatment of the 
algebraic part of the system that may lead to polynomial parts in the transfer 
function.
Finally, in Section \ref{ChapterResults}, we evaluate the method of this paper by applying it to our model problems from Section~\ref{sec:examples}.

\section{Model problems}\label{sec:examples}
\subsection{Problem 1: Stokes-like DAEs}
The first example is the system 
\begin{align}\label{Index2}
\begin{split}
\frac{\mathrm{d}}{\mathrm{d}t}\begin{bmatrix}
E(\mu) & 0 \\
0 & 0
\end{bmatrix}\begin{bmatrix}
x(t)\\
{\lambda}(t)
\end{bmatrix}&=\begin{bmatrix}
A(\mu) & G(\mu) \\
G(\mu)^{\mathrm{T}} & 0
\end{bmatrix}\begin{bmatrix}
x(t) \\
\lambda(t)
\end{bmatrix} + \begin{bmatrix}
B_1(\mu)\\
B_2(\mu)
\end{bmatrix}u(t),\\
y(t) &= \begin{bmatrix}
C_1(\mu) & C_2(\mu)
\end{bmatrix}\begin{bmatrix}
x(t) \\
\lambda(t)
\end{bmatrix},
\end{split}
\end{align}
which arises, e.\,g., if we discretize the incompressible Navier-Stokes equation. The parameter-independent version is presented in \cite{morMehS05,morSty04}.
The system matrices are dependent on a parameter $\mu\in\mathcal{D}$, where $\mathcal{D}\subset\mathbb{R}^d$ is the parameter domain.
For fluid-flow problems, the matrices $E(\mu),\, A(\mu)\in\mathbb{R}^{n,n}$ represent the masses and the discretized Laplace operator. 
Naturally, it holds that $E(\mu) = E(\mu)^{\mathrm{T}}>0$ and $A(\mu)=A(\mu)^{\mathrm{T}}<0$ for all $\mu \in \mathcal{D}$, where $H>0$ ($H\geq 0$) denotes a positive (semi)definite matrix $H$ and correspondingly $H<0$ ($H\leq 0$) denotes a negative (semi)definite matrix.
The discrete gradient is given by $G(\mu)\in\mathbb{R}^{n,q}$ which we assume to be of full column rank for all $\mu \in \mathcal{D}$.
The matrices $B_1(\mu)\in\mathbb{R}^{n,m},\, B_2(\mu)\in\mathbb{R}^{q,m}$ and $C_1(\mu)\in\mathbb{R}^{p,n}, \, C_2(\mu)\in\mathbb{R}^{p,q}$ are the input and the output matrices, respectively.
The state of the system at time $t$ is given by $x(t)\in\mathbb{R}^{n}$ and $\lambda(t)\in\mathbb{R}^{q}$.
The vectors $u(t)\in\mathbb{R}^{m}$ and $y(t)\in\mathbb{R}^{p}$ are the input and output of the system.

In view of \eqref{eq:affinedeco} we assume that the symmetry and definitess of 
$E(\cdot)$ and $A(\cdot)$ is represented in the affine parameter structure as
\begin{equation*}
	E(\mu)  = \sum_{k=0}^{n_{E}}\Theta_{k}^{E}(\mu)E_{k}\quad \text{and} 
\quad A(\mu)  = \sum_{k=0}^{n_{A}}\Theta_{k}^{A}(\mu)A_{k},
\end{equation*}
where $\Theta_{k}^{E}(\cdot)$, $\Theta_{k}^{A}(\cdot)$ are  
\emph{positive} parameter-dependent continuous functions 
with the convention $\Theta_{0}^{E} = \Theta_{0}^{A} \equiv 1$. Furthermore, $E_0 = 
E_0^\mathrm{T} > 0$, $A_0 = A_0^\mathrm{T} < 0$, $E_{k} = E_k^\mathrm{T} \ge 0$, 
$A_{k} = A_k^{\mathrm{T}} \le 0$ for $k \ge 1$ are parameter-independent 
matrices. Moreover, for reasons of computational efficiency, we always assume 
that $n_{E},\, n_{A} \ll n$. 

\subsection{Problem 2: Mechanical systems}\label{sec:2.2}
The second system we consider is of the form 
\begin{align}\label{Index3}
\begin{split}
\frac{\mathrm{d}}{\mathrm{d}t}\begin{bmatrix}
I_{n_x} & 0 & 0 \\
0 & M(\mu) & 0 \\
0 & 0  & 0
\end{bmatrix}\begin{bmatrix}
x_1(t)\\
x_2(t)\\
\lambda(t)
\end{bmatrix} &= \begin{bmatrix}
0 & I_{n_x} & 0\\
-K(\mu) & -D(\mu) & G(\mu)\\
G(\mu)^{\mathrm{T}} & 0 & 0
\end{bmatrix}\begin{bmatrix}
x_1(t)\\
x_2(t)\\
\lambda(t)
\end{bmatrix} + \begin{bmatrix}
0\\
B_{x}(\mu)\\
0
\end{bmatrix}u(t),\\
y(t) &= \begin{bmatrix}
	C_{x}(\mu) & 0&0
\end{bmatrix}\begin{bmatrix}
x_1(t)\\
x_2(t)\\
\lambda(t)
\end{bmatrix},
\end{split}
\end{align}
which results from a linearization of the spring-mass-damper model presented in \cite{morMehS05}.
The mass, damping, and stiffness matrices $M(\mu), \, D(\mu),\, K(\mu)\in\mathbb{R}^{n_x,n_x}$ are assumed to be symmetric and positive definite for all $\mu\in\mathcal{D}$.
The matrices $B_{x}(\mu)\in\mathbb{R}^{n_x,m}$ and $C_{x}(\mu)\in\mathbb{R}^{p,n_x}$ are the input and the output matrices.
The matrix $G(\mu)\in\mathbb{R}^{n_x,q}$ reflects algebraic constraints on the system and is of full column rank.
In this example, the state includes the displacement 
$x_1(t)\in\mathbb{R}^{n_x}$, the velocity $x_2(t)\in\mathbb{R}^{n_x}$, and 
Lagrange multiplier $\lambda(t)\in\mathbb{R}^{q}$.

For convenience, we also define
\begin{align}\label{eq:EAIndex3}
\begin{split}
E(\mu) &:= \begin{bmatrix}
I_{n_x}&0\\
0&M(\mu)
\end{bmatrix},\quad A(\mu):= \begin{bmatrix}
0 & I_{n_x}\\
-K(\mu) & -D(\mu)
\end{bmatrix},\quad  B(\mu):=\begin{bmatrix}
0\\
B_{x}(\mu)
\end{bmatrix},\\
C(\mu) &:=\begin{bmatrix}
C_{x}(\mu) & 0
\end{bmatrix}, \quad n:=2n_x.
\end{split}
\end{align}
Then with this notation, we can write the mechanical system similarly as in \eqref{Index2} with the difference that the off-diagonal blocks of the state matrix are not the transposes of each other.

Similarly to the first model problem, we assume that $M(\cdot)$, $D(\cdot)$, and $K(\cdot)$ can be written as 
\begin{equation*}
	M(\mu)  = \sum_{k=0}^{n_{M}}\Theta_{k}^{M}(\mu)M_{k}, \quad D(\mu)  
= \sum_{k=0}^{n_{D}}\Theta_{k}^{D}(\mu)D_{k},\quad \text{and} \quad  
K(\mu)  = \sum_{k=0}^{n_{K}}\Theta_{k}^{K}(\mu)K_{k},
\end{equation*}
where $\Theta_{k}^{M}(\cdot)$, $\Theta_{k}^{D}(\cdot)$, $\Theta_{k}^{K}(\cdot)$ 
are \emph{positive} parameter-dependent continuous functions 
using the convention $\Theta_{0}^{M} = 
\Theta_{0}^{D} = \Theta_{0}^{K} \equiv 1$. Furthermore, $M_0 = 
M_0^\mathrm{T} > 0$, $D_{0} = D_0^\mathrm{T} > 0$, $K_0 = K_0^\mathrm{T} > 0$ 
and $M_{k} = M_k^\mathrm{T}\geq 0$, $D_{k} = D_k^\mathrm{T}\geq 0$, $K_{k} = K_k^\mathrm{T}\geq 0$ 
for $k \ge 1$ are parameter-independent matrices with $n_{M},\, n_{D}, \, 
n_{K}\ll n_x$.

\section{Preliminaries}\label{sec:prelim}
\subsection{Decoupling of differential and algebraic equations}\label{ChapterProj}
In this section we briefly describe the projection technique for 
decoupling the algebraic constraints in DAEs of the form \eqref{eq:FOM} from 
the remaining differential equations
for a fixed parameter $\mu \in \mathcal{D}$ and under the assumptions of regularity and asymptotic stability. 
Therefore, for simplicity of presentation, we omit $\mu$ in this section. 
The details of this can be found in \cite{morSty04,morSty06,morHeiSS08,morSaaV18}.

As shown in \cite{BerIT12} there exist regular matrices 
$\bW,~\bT\in\mathbb{R}^{N,N}$ that transform the system matrices into 
quasi-Weierstra{\ss} form, that is
\begin{equation}\label{eq:QWF}
\bE = \bW\begin{bmatrix}
I_{N_\mathrm{f}} & 0 \\ 0 & \bN
\end{bmatrix}\bT, \qquad
\bA = \bW\begin{bmatrix}
\bJ & 0 \\ 0 & I_{N_{\infty}}
\end{bmatrix}\bT,
\end{equation}
where $\bJ\in\mathbb{R}^{{N_\mathrm{f}}, {N_\mathrm{f}}}$ and 
$\bN\in\mathbb{R}^{N_{\infty}, N_{\infty}}$ is nilpotent with nilpotency index 
$\nu$ that coincides with the \emph{(Kronecker) index} of the DAE system 
\eqref{eq:FOM}. Here, ${N_\mathrm{f}}$ is the number of the finite eigenvalues 
of the matrix pencil $s\bE-\bA$ and $N_{\infty}$ is  the number of the infinite 
ones.
Accordingly, spectral projectors onto the left and right deflating subspace of 
$s\bE-\bA$ corresponding to the ${N_\mathrm{f}}$ finite eigenvalues are defined 
as 
\[
\bPil := \bW\begin{bmatrix}
I_{N_\mathrm{f}}& 0 \\ 0 & 0
\end{bmatrix}\bW^{-1}, \qquad \bPir := \bT^{-1}\begin{bmatrix}
I_{N_\mathrm{f}}& 0 \\ 0 & 0
\end{bmatrix}\bT.
\]
We define 
\begin{align*}
\bF_{\bJ}(t) &:= \bT^{-1}\begin{bmatrix}
\mathrm{e}^{\bJ t} & 0 \\ 0 & 0
\end{bmatrix}\bW^{-1},  \quad t \in \mathbb{R}\qquad\text{and} \\
\bF_{\bN}(k) &:= \bT^{-1}\begin{bmatrix}
0 & 0 \\ 0 & -\bN^k 
\end{bmatrix}\bW^{-1},\quad   k=0,\,\dots,\,\nu-1
\end{align*}
to derive the states
\[
\bz_{\mathrm{p}}(t):= \int_{0}^{t}\bF_{\bJ}(t-\tau)\bB 
\bu(\tau)\,\mathrm{d}\tau, 
\qquad
\bz_{\mathrm{i}}(t):= \sum_{k = 0}^{\nu-1}\bF_{\bN}(k)\bB \bu^{(k)}(t),
\]
under the assumption that the input $\bu(\cdot)$ is sufficiently smooth {and that the consistency conditions are satisfied, i.\,e., we assume that $\bz_{\mathrm{i}}(0):= \sum_{k = 0}^{\nu-1}\bF_{\bN}(k)\bB \bu^{(k)}(0)$}.
They satisfy $\bz(t) = \bz_{\mathrm{p}}(t) + \bz_{\mathrm{i}}(t)$ such that $\bz(t)$ solves equation \eqref{eq:FOM_1}.
We refer to $\bz_{\mathrm{p}}(t)$ and $\bz_{\mathrm{i}}(t)$ as differential and algebraic states.
These states are associated with the \emph{proper and improper controllability Gramians}
\begin{align}\label{eq:contrGram}
\bP_{\mathrm{p}} = 
\int_{0}^{\infty}\bF_{\bJ}(t)\bB\bB^{\mathrm{T}}\bF_{\bJ}(t)^{\mathrm{T}}\,
\mathrm { d } t , \qquad \bP_{\mathrm{i}} = \sum_{k=0}^{\nu 
-1}\bF_{\bN}(k)\bB\bB^{\mathrm{T}}\bF_{\bN}(k)^{\mathrm{T}},
\end{align}
respectively. They span the reachable subspaces of the differential and algebraic states. 
Analogously, the \emph{proper and improper observability Gramians}
\begin{align*}
\bQ_{\mathrm{p}} = 
\int_{0}^{\infty}\bF_{\bJ}(t)^{\mathrm{T}}\bC^{\mathrm{T}}\bC\bF_{\bJ}(t)\,
\mathrm{d} t , \qquad \bQ_{\mathrm{i}} = \sum_{k=0}^{\nu 
-1}\bF_{\bN}^{\mathrm{T}}(k)\bC^{\mathrm{T}}\bC\bF_{\bN}(k)
\end{align*}
span the corresponding observability spaces.
The proper Gramians are obtained by solving the \emph{projected continuous-time Lyapunov equations} 
\begin{align}
\bE\bP_{\mathrm{p}} \bA^{\mathrm{T}} + \bA\bP_{\mathrm{p}}\bE^{\mathrm{T}} &= - \bPil \bB\bB^{\mathrm{T}} \bPil^{\mathrm{T}}, \qquad \bP_{\mathrm{p}} =\bPir \bP_{\mathrm{p}} \bPir^{\mathrm{T}},\label{eq:propLE_contr}\\
\bE^{\mathrm{T}}\bQ_{\mathrm{p}} \bA + \bA^{\mathrm{T}}\bQ_{\mathrm{p}}\bE &= - 
\bPir^{\mathrm{T}} \bC^{\mathrm{T}}\bC \bPir, \qquad \bQ_{\mathrm{p}} = 
\bPil^{\mathrm{T}} \bQ_{\mathrm{p}} \bPil \label{eq:propLE_observ}
\end{align}
and the improper ones by solving the \emph{discrete-time Lyapunov equations} 
\begin{align}
\bA\bP_{\mathrm{i}}\bA^{\mathrm{T}} - \bE\bP_{\mathrm{i}}\bE^{\mathrm{T}} &= (I_N- \bPil) \bB\bB^{\mathrm{T}} (I_N - \bPil)^{\mathrm{T}}, \qquad 0 = \bPir \bP_{\mathrm{i}} \bPir^{\mathrm{T}},\label{eq:impropLE_contr}\\
\bA^{\mathrm{T}}\bQ_{\mathrm{i}} \bA -\bE^{\mathrm{T}}\bQ_{\mathrm{i}}\bE &= 
(I_N-\bPir)^{\mathrm{T}} \bC^{\mathrm{T}}\bC (I_N-\bPir), \qquad 0 = 
\bPil^{\mathrm{T}} \bQ_{\mathrm{i}} \bPil.\label{eq:impropLE_obs}
\end{align}
In Subsection \ref{ChapterLE} we will describe how these projected Lyapunov equations can be solved.

In practice, the projectors $\bPil$ and $\bPir$ are computationally expensive to 
form and even more expensive to compute with as they are dense matrices of 
large dimension in general.
To determine them, the derivations from \cite{KunM06} can be applied, however
they are not numerically stable since a Jordan canonical form needs to be 
computed.
On the other hand, the methods presented in \cite{Ban14}, \cite{CamMR96}, or 
\cite{Mae96} might be used to generate the differential and algebraic state 
spaces, however, they involve the determination of matrix kernels which 
are based on matrix decompositions. 
Suitable parts of the matrices involved in these decompositions are then used to determine the projection matrices $\bPil$ and $\bPir$.
They are in general not of a sparse structure and, therefore, not applicable if we consider matrices of large dimensions.
Hence, if possible, it is advantageous to determine the projection matrices 
analytically. In many application fields this is possible because the 
matrices $\bE$ and $\bA$ are of special structure that can be utilized for this 
purpose. 
The examples in Section~\ref{sec:examples} have such a numerically advantageous structure. Further examples can be found in \cite{Sty08,Mae96}.

\subsection{Model reduction of differential-algebraic systems}\label{sec:MOR}
The aim of this section is to review methods to reduce the DAE system \eqref{eq:FOM} for a fixed parameter.
We utilize balanced truncation modified for projected systems which is presented in Subsection~\ref{ChapterBT}.
Afterwards, in Subsection~\ref{ChapterLE}, we summarize the ADI and Smith
methods to solve the involved projected Lyapunov equations. Finally, in 
Subsection \ref{ssec:D0D1determ}, we present an alternative approach to 
approximate the algebraic parts of the system (potentially leading to 
polynomial parts in the transfer function) which is inspired by 
\cite{morMosSMV22}.

\subsubsection{Balanced truncation}\label{ChapterBT}
The aim of this subsection is to find a reduced system
\begin{align}\label{Index2red}
\begin{split}
	\frac{\mathrm{d}}{\mathrm{d}t}\bE_{\mathrm{R}}\bz_{\mathrm{R}}(t) &= 
\bA_{\mathrm{R}}\bz_{\mathrm{R}}(t) + \bB_{\mathrm{R}}\bu(t),\\
	\by_{\mathrm{R}}(t) &= \bC_{\mathrm{R}}\bz_{\mathrm{R}}(t),
	\end{split}
\end{align}
with $\bE_{\mathrm{R}},\, \bA_{\mathrm{R}} \in \mathbb{R}^{r,r},\, \bB_{\mathrm{R}}\in\mathbb{R}^{r,m},\, \bC_{\mathrm{R}}\in\mathbb{R}^{p,r}$, $r \ll N$ which approximates the input/output behavior of the original system \eqref{eq:FOM} 
(for a fixed parameter).



Since the different Gramians are positive semidefinite, there exist factorizations
\begin{equation*}
\bP_{\mathrm{p}} = \bR_{\mathrm{p}}\bR_{\mathrm{p}}^{\mathrm{T}}, \quad \bP_{\mathrm{i}} = \bR_{\mathrm{i}}\bR_{\mathrm{i}}^{\mathrm{T}}, \quad  \bQ_{\mathrm{p}} = \bS_{\mathrm{p}}\bS_{\mathrm{p}}^{\mathrm{T}}, \quad  \bQ_{\mathrm{i}} = \bS_{\mathrm{i}}\bS_{\mathrm{i}}^{\mathrm{T}}
\end{equation*}
with factors $\bR_{\mathrm{p}},\,\bR_{\mathrm{i}},\,\bS_{\mathrm{p}},\,\bS_{\mathrm{i}} \in \mathbb{R}^{N,N}$.
We consider the singular value decompositions
\begin{align}
\bS_{\mathrm{p}}^{\mathrm{T}}\bE \bR_{\mathrm{p}} &= 
\bU_{\mathrm{p}}\bSigma_{\mathrm{p}} \bV_{\mathrm{p}}^{\mathrm{T}} = 
\begin{bmatrix}
\bU_{\mathrm{p},1} & \bU_{\mathrm{p},2}
\end{bmatrix}\begin{bmatrix}
\bSigma_{\mathrm{p},1} & 0\\
0 & \bSigma_{\mathrm{p},2}
\end{bmatrix}\begin{bmatrix}
\bV_{\mathrm{p},1}^{\mathrm{T}}\\\bV_{\mathrm{p},2}^{\mathrm{T}}
\end{bmatrix},\nonumber \\ 
\bS_{\mathrm{i}}^{\mathrm{T}}\bA \bR_{\mathrm{i}} &= 
\bU_{\mathrm{i}}\bSigma_{\mathrm{i}}\bV_{\mathrm{i}}^{\mathrm{T}}
=\begin{bmatrix}
\bU_{\mathrm{i},1} & \bU_{\mathrm{i},2}
\end{bmatrix}\begin{bmatrix}
\bSigma_{\mathrm{i},1} & 0\\
0 & 0
\end{bmatrix}\begin{bmatrix}
\bV_{\mathrm{i},1}^{\mathrm{T}}\\\bV_{\mathrm{i},2}^{\mathrm{T}}
\end{bmatrix},\label{eq:SVD_improp}
\end{align}
where the matrix $\bSigma_{\mathrm{p}}=\mathrm{diag}(\sigma_1, \dots, 
\sigma_N)$ is a diagonal matrix with nonincreasing nonnegative entries that are 
the 
proper Hankel singular values and $\bSigma_{\mathrm{i},1}$ is a diagonal matrix 
including the improper nonzero Hankel singular values.
With the matrix $\bSigma_{\mathrm{p},1}$ which contains the $r_{\mathrm{p}}$ 
largest Hankel singular values and $\bSigma_{\mathrm{i},1} \in 
\mathbb{R}^{r_{\mathrm{i}},r_{\mathrm{i}}}$ we construct the left and right 
projection matrices
\begin{equation*}
\bW_{\mathrm{R}}:= \begin{bmatrix}
\bS_{\mathrm{p}}\bU_{\mathrm{p},1}\bSigma_{\mathrm{p},1}^{-\frac{1}{2}} & 
\bS_{\mathrm{i}}\bU_{\mathrm{i},1}\bSigma_{\mathrm{i},1}^{-\frac{1}{2}}
\end{bmatrix}
\quad \text{and}\quad \bT_{\mathrm{R}}:= \begin{bmatrix}
\bR_{\mathrm{p}}\bV_{\mathrm{p},1}\bSigma_{\mathrm{p},1}^{-\frac{1}{2}} & 
\bR_{\mathrm{i}}\bV_{\mathrm{i},1}\bSigma_{\mathrm{i},1}^{-\frac{1}{2}}
\end{bmatrix}
\end{equation*}
that balance the system, reduce the differential subsystem, and generate a 
minimal realization of the algebraic subsystem.

We obtain the reduced system \eqref{Index2red} by setting
\begin{align}\label{eq:redMat}
\begin{split}
\bE_{\mathrm{R}} :=&{} \bW_{\mathrm{R}}^{\mathrm{T}}\bE \bT_{\mathrm{R}} := 
\begin{bmatrix}
I_{r_{\mathrm{p}}}& 0 \\ 0 & \bN_{\mathrm{R}}
\end{bmatrix},\qquad \bA_{\mathrm{R}} := \bW_{\mathrm{R}}^{\mathrm{T}}\bA 
\bT_{\mathrm{R}} := \begin{bmatrix}
\bJ_{\mathrm{R}}& 0 \\ 0 & I_{r_{\mathrm{i}}}
\end{bmatrix},\\ \bB_{\mathrm{R}} :=&{}\begin{bmatrix}
\bB_{\mathrm{R},1}\\ \bB_{\mathrm{R}, 2}
\end{bmatrix} := \bW_{\mathrm{R}}^{\mathrm{T}}\bB,\qquad \bC_{\mathrm{R}} 
:= \begin{bmatrix}
\bC_{\mathrm{R},1}& \bC_{\mathrm{R}, 2}
\end{bmatrix} := \bC \bT_{\mathrm{R}}.
\end{split}
\end{align}
The generated reduced system has decoupled differential and algebraic states.


If $\sigma_{r_{\mathrm{p}}} >\sigma_{r_{\mathrm{p}}+1}$, then the ROM is asymptotically stable and one can estimate 
the output error of the reduced system by
\begin{align}\label{ErrBo}
	{\|\by-\by_{\mathrm{R}}\|}_{L^{2}([0,\infty),\mathbb{R}^p)} \leq 
{\|\boldsymbol{\mathcal{G}}-\boldsymbol{\mathcal{G}}_{\mathrm{R}}\|}_{\infty}{
\|\bu\|}_{L^{2}([0,\infty),\mathbb{R}^m)}
	\leq \left(2 \sum_{j=r_{\mathrm{p}}+1}^{n} \sigma_j \right) 
{\|\bu\|}_{L^{2}([0,\infty),\mathbb{R}^m)},
\end{align}
where 
$\boldsymbol{\mathcal{G}}(s) := \bC\left(s\bE-\bA\right)^{-1}\bB\in\mathcal{RH}_{\infty}^{p, m}$ and $\boldsymbol{\mathcal{G}}_{\mathrm{R}}(s) := \bC_{\mathrm{R}}\left(s\bE_{\mathrm{R}} - \bA_{\mathrm{R}}\right)^{-1}\bB_{\mathrm{R}}\in\mathcal{RH}_{\infty}^{p, m}$
are the \emph{transfer functions} of the original and the reduced system \cite[Theorem 7.9]{morAnt05}. 
The \emph{$\mathcal{H}_{\infty}$-norm} is defined as
${\|\boldsymbol{\mathcal{G}}\|}_{\infty} := \sup_{\omega\in\mathbb{R}}\sigma_{\max}(\boldsymbol{\mathcal{G}}(\mathrm{i} \omega))$ where $\sigma_{\max}(\cdot)$ denotes the maximum singular value of its matrix argument.

\begin{remark}
 In the balancing procedure outlined above, it is important that the number of inputs and outputs is very small compared to the state-space dimension, i.\,e., $m,\,p \ll N$, which we assume throughout this work. In this case, the Gramians will typically have low numerical rank~\cite{SorZ02}, which ensures that a low-rank approximation of the Gramians is possible and numerically efficient and that the FOM can be well-approximated by a ROM of low order. Balanced truncation model reduction is also possible in the case that one of the dimensions $m$ or $p$ is large while the other one is small, see, e.\,g., \cite{BenS13,PulNS21}. However, in this case, the numerical procedure is based on the approximation of an integral by quadrature. Our method cannot be generalized to this more general situation in a straightforward manner, but we believe that it would be an interesting topic for future research.  
\end{remark}

\subsubsection{Solving projected Lyapunov equations}\label{ChapterLE}
The aim of this subsection is to present numerical techniques to solve the projected Lyapunov equations \eqref{eq:propLE_contr} and \eqref{eq:impropLE_contr} in order to approximate the Gramians of system \eqref{eq:FOM}.
For standard systems with $\bE = I_N$, there are several methods to solve the corresponding Lyapunov equations.
If small-scale matrices are considered, the Bartels-Stewart algorithm \cite{BarS72} or Hammarling's method \cite{Ham82b} are used.
These methods however, are inefficient in the case of large matrices. 
In typical applications of practical relevance, the solution of a Lyapunov equation is often of very low numerical rank. So it is desired to compute low-rank factors of these solutions directly.  
State-of-the-art methods are the ADI method \cite{Pen00b,Kue16}, the sign function method \cite{morBenOCetal17} or Krylov subspace methods \cite{SimD09}.
In the literature there are also several extensions to projected Lyapunov equations such as \cite{morHeiSS08,Sty08}.

In this section, we utilize the ADI method to solve the projected continuous-time Lyapunov equation \eqref{eq:propLE_contr} and the generalized Smith method to solve the discrete-time Lyapunov equation \eqref{eq:impropLE_contr}.
Here we follow the ideas from \cite{Sty08}.

We can use the following lemma from \cite{Sty08} to derive an equation that is 
equivalent to the projected continuous-time Lyapunov equation 
\eqref{eq:propLE_contr}.
\begin{lemma}\label{lemma:SteinADI}
Let the matrix pencil $s\bE-\bA$ with $\bE,\,\bA \in\mathbb{R}^{N, N}$ be regular and asymptotically stable. Let further the matrix $\bA$ be nonsingular and $\bB \in\mathbb{R}^{N, m} $. Assume that the left and right spectral projectors onto the finite spectrum of $s\bE - \bA$ are denoted by $\bPil,\,\bPir \in\mathbb{R}^{N, N}$.
If $p \in \mathbb{C}$ with $\operatorname{Re}(p) < 0$ is not an eigenvalue of the pencil $s\bA-\bE$, then the projected discrete-time Lyapunov equation 
\begin{equation}\label{eq:Stein}
\bP_{\mathrm{p}}  = \bS(p) \bR(p) \bP_{\mathrm{p}} \bR(p)^{\mathrm{H}}\bS(p)^{\mathrm{H}} - 2\mathrm{Re}(p)\bS(p)\bPil\bB\bB^{\mathrm{T}}\bPil^{\mathrm{T}}\bS(p)^{\mathrm{H}}, \qquad \bP_{\mathrm{p}} = \bPir \bP_{\mathrm{p}} \bPir^{\mathrm{T}}
\end{equation}
with
\begin{equation}\label{eq:ADI_ST}
\bS(p):= (\bE + p \bA)^{-1}\qquad\text{and}\qquad \bR(p):= \bE - \overline{p} \bA
\end{equation}
is equivalent to the projected continuous-time Lyapunov equation \eqref{eq:propLE_contr}, i.\,e., their solution sets coincide.
\end{lemma}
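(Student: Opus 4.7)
The plan is to establish the equivalence via a Cayley-type transformation that maps the continuous-time projected Lyapunov equation \eqref{eq:propLE_contr} onto its Stein-form counterpart \eqref{eq:Stein}. The scheme reduces to one purely algebraic identity together with pre- and post-multiplication by appropriate invertible matrices.

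First I would verify the identity
\[
(\bE + p\bA)\bY(\bE + p\bA)^{\mathrm{H}} - (\bE - \overline{p}\,\bA)\bY(\bE - \overline{p}\,\bA)^{\mathrm{H}} = 2\mathrm{Re}(p)\bigl(\bE\bY\bA^{\mathrm{T}} + \bA\bY\bE^{\mathrm{T}}\bigr),
\]
which holds for every $\bY \in \mathbb{R}^{N, N}$ (using that $\bE$ and $\bA$ are real, so $\bE^{\mathrm{H}} = \bE^{\mathrm{T}}$ and $\bA^{\mathrm{H}} = \bA^{\mathrm{T}}$). The verification is a direct expansion: the $\bE\bY\bE^{\mathrm{T}}$ and $|p|^{2}\bA\bY\bA^{\mathrm{T}}$ contributions appear identically in both products and cancel, while the cross terms combine to $(\overline{p}+p)(\bE\bY\bA^{\mathrm{T}} + \bA\bY\bE^{\mathrm{T}})$.

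For the forward implication I would assume $\bP_{\mathrm{p}}$ satisfies \eqref{eq:propLE_contr}, apply the identity with $\bY = \bP_{\mathrm{p}}$, and substitute the Lyapunov equation into the right-hand side. This yields
\[
(\bE + p\bA)\bP_{\mathrm{p}}(\bE + p\bA)^{\mathrm{H}} = \bR(p)\bP_{\mathrm{p}}\bR(p)^{\mathrm{H}} - 2\mathrm{Re}(p)\bPil\bB\bB^{\mathrm{T}}\bPil^{\mathrm{T}}.
\]
By the hypothesis on $p$, the matrix $\bE + p\bA$ is invertible, so $\bS(p)$ exists; premultiplying by $\bS(p)$ and postmultiplying by $\bS(p)^{\mathrm{H}}$ delivers \eqref{eq:Stein}. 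The projection constraint $\bP_{\mathrm{p}} = \bPir\bP_{\mathrm{p}}\bPir^{\mathrm{T}}$ is shared by both equations and needs no manipulation.

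The reverse implication is essentially symmetric: starting from \eqref{eq:Stein} and premultiplying by $\bE + p\bA$ and postmultiplying by $(\bE + p\bA)^{\mathrm{H}}$, the identity in the opposite direction gives
\[
2\mathrm{Re}(p)\bigl(\bE\bP_{\mathrm{p}}\bA^{\mathrm{T}} + \bA\bP_{\mathrm{p}}\bE^{\mathrm{T}}\bigr) = -2\mathrm{Re}(p)\bPil\bB\bB^{\mathrm{T}}\bPil^{\mathrm{T}},
\]
from which \eqref{eq:propLE_contr} is recovered by cancelling the factor $2\mathrm{Re}(p)$. The only subtlety worth flagging is that this last cancellation tacitly requires $\mathrm{Re}(p) \neq 0$; indeed, a purely imaginary $p$ satisfies $\bE - \overline{p}\bA = \bE + p\bA$ and degenerates \eqref{eq:Stein} into the trivial identity $\bP_{\mathrm{p}} = \bP_{\mathrm{p}}$. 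This restriction is innocuous in the intended ADI application, where the shifts are drawn from the open left half-plane, but it should appear as an explicit hypothesis. Beyond this the argument is mechanical bookkeeping of the expansion, so I do not anticipate a genuine obstacle.
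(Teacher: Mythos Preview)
Your proof is correct and is the standard Cayley-transform argument for this kind of equivalence. Note that the paper does not actually prove this lemma: it is quoted from \cite{Sty08} without proof, so there is no in-paper argument to compare against. Your flagged caveat about $\mathrm{Re}(p)=0$ is valid---in that case the Stein equation degenerates to a tautology and the reverse implication fails---and this condition is indeed implicit in the ADI context (shifts in $\mathbb{C}^{-}$) even though the lemma as stated omits it.
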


Lemma~\ref{lemma:SteinADI} motivates the ADI iteration
\begin{align}\label{eq:ADIiteration}
\begin{split}
\bP_0 &:= 0, \\
\bP_k &:= \bS(p_k) \bR(p_k) \bP_{k-1} \bR(p_k)^{\mathrm{H}}\bS(p_k)^{\mathrm{H}} - 2\mathrm{Re}(p_k)\bS(p_k)\bPil\bB\bB^{\mathrm{T}}\bPil^{\mathrm{T}}\bS(p_k)^{\mathrm{H}}, \quad k=1,\,2,\,\ldots.
\end{split}
\end{align}
As shown in \cite{Sty08} the following proposition provides the convergence of the iteration \eqref{eq:ADIiteration}.

\begin{proposition}
 Let the matrix pencil $s\bE-\bA$ with $\bE,\,\bA \in\mathbb{R}^{N, N}$ be regular and \emph{asymptotically stable} (i.\,e., all its finite eigenvalues have negative real part) and let $\bB \in\mathbb{R}^{N, m}$. Assume that the left and right spectral projectors onto the finite spectrum of $s\bE - \bA$ are denoted by $\bPil,\,\bPir\in\mathbb{R}^{N, N}$.
Suppose that we have given a sequence of shift parameters ${(p_k)}_{k \ge 0}$ with $\operatorname{Re}(p_k) <0$ for all $k\ge 0$ and with $p_{k+\ell} = p_k$ for some $\ell \ge 1$ and all $k=0,\,1,\,2,\,\ldots$. Then the iteration \eqref{eq:ADIiteration} converges to the solution $\bP_{\mathrm{p}}$ of the projected Lyapunov equation \eqref{eq:propLE_contr}.
\end{proposition}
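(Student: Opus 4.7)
The plan is to analyze the error $\mathbf{X}_k := \bP_{\mathrm{p}} - \bP_k$, where $\bP_{\mathrm{p}}$ is the unique solution of the projected Lyapunov equation~\eqref{eq:propLE_contr} (uniqueness follows from asymptotic stability of the pencil). By Lemma~\ref{lemma:SteinADI}, for every shift $p_k \in \mathbb{C}^-$ the matrix $\bP_{\mathrm{p}}$ also satisfies the projected Stein equation~\eqref{eq:Stein}, and subtracting this identity from the iteration~\eqref{eq:ADIiteration} cancels the inhomogeneous term and leaves the homogeneous recursion
\begin{equation*}
\mathbf{X}_k \;=\; \bM(p_k)\,\mathbf{X}_{k-1}\,\bM(p_k)^{\mathrm{H}}, \qquad \bM(p) \;:=\; \bS(p)\bR(p) \;=\; (\bE + p\bA)^{-1}(\bE - \overline{p}\bA).
\end{equation*}
Starting from $\mathbf{X}_0 = \bP_{\mathrm{p}}$ and unrolling, this gives $\mathbf{X}_k = \mathbf{K}_k \bP_{\mathrm{p}} \mathbf{K}_k^{\mathrm{H}}$ with $\mathbf{K}_k := \bM(p_k)\cdots \bM(p_1)$. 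Using the constraint $\bP_{\mathrm{p}} = \bPir\bP_{\mathrm{p}}\bPir^{\mathrm{T}}$ built into~\eqref{eq:propLE_contr}, I rewrite this as $\mathbf{X}_k = (\mathbf{K}_k\bPir)\bP_{\mathrm{p}}(\mathbf{K}_k\bPir)^{\mathrm{H}}$, so it suffices to show that $\mathbf{K}_k\bPir \to 0$ in norm.

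Next I would pass (only for the analysis; no computation with the transformation is intended) to the quasi-Weierstra{\ss} form~\eqref{eq:QWF}. A direct calculation yields
\begin{equation*}
\bM(p) \;=\; \bT^{-1}\begin{bmatrix} (I_{N_{\mathrm{f}}} + p\bJ)^{-1}(I_{N_{\mathrm{f}}} - \overline{p}\bJ) & 0 \\ 0 & (\bN + pI_{N_\infty})^{-1}(\bN - \overline{p}I_{N_\infty})\end{bmatrix}\bT,
\end{equation*}
and the inverses exist for every $p \in \mathbb{C}^-$ because $-1/p \in \mathbb{C}^+$ cannot lie in $\sigma(\bJ) \subset \mathbb{C}^-$, while $\bN + pI_{N_\infty}$ is nonsingular since $\bN$ is nilpotent and $p \neq 0$. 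In particular $\bM(p)$ commutes with $\bPir = \bT^{-1}\diag(I_{N_{\mathrm{f}}}, 0)\bT$, so $\mathbf{K}_k\bPir$ is block-diagonal with upper block $\bM_{\bJ}(p_k)\cdots \bM_{\bJ}(p_1)$ and vanishing lower block, where $\bM_{\bJ}(p) := (I_{N_{\mathrm{f}}} + p\bJ)^{-1}(I_{N_{\mathrm{f}}} - \overline{p}\bJ)$. The projection thus annihilates the infinite part -- on which $\bM(p)$ has only the unit-modulus eigenvalue $-\overline{p}/p$ and no contraction could be expected -- and the problem reduces to the classical unprojected ADI contraction statement for $\bJ$.

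To close the argument I would invoke a standard spectral analysis of the cyclic Cayley product. Since all factors $\bM_{\bJ}(p_j)$ are rational functions of the single matrix $\bJ$ they pairwise commute, and for any $\lambda \in \sigma(\bJ)$ the corresponding eigenvalue of $\bM_{\bJ}(p_j)$ is $(1-\overline{p_j}\lambda)/(1+p_j\lambda)$. A short calculation gives $|1-\overline{p}\lambda|^2 - |1+p\lambda|^2 = -4\,\mathrm{Re}(p)\,\mathrm{Re}(\lambda) < 0$, strictly negative because $\mathrm{Re}(p_j)<0$ by hypothesis and $\mathrm{Re}(\lambda) < 0$ by asymptotic stability. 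Using the periodicity $p_{k+\ell}=p_k$, set $\bM_{\mathrm{c}} := \bM_{\bJ}(p_\ell)\cdots \bM_{\bJ}(p_1)$; each of its eigenvalues, being a product of $\ell$ such Cayley ratios, has modulus strictly less than one, so $\rho(\bM_{\mathrm{c}}) < 1$ and Gelfand's formula delivers $\|\bM_{\mathrm{c}}^n\| \to 0$. Writing any $k = n\ell + s$ with $0 \le s < \ell$ and using that the finitely many remainder factors $\bM_{\bJ}(p_s)\cdots \bM_{\bJ}(p_1)$ are uniformly bounded, I obtain $\|\mathbf{K}_k\bPir\| \to 0$ and hence $\mathbf{X}_k \to 0$. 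The main obstacle I foresee is that $\bJ$ need not be diagonalizable, so the individual Cayley factors are not norm-contractive; this forces the argument to be organized per full period of length $\ell$ and closed via Gelfand's spectral-radius formula on $\bM_{\mathrm{c}}^n$, rather than by a naive telescoping of operator norms along $\mathbf{K}_k$.
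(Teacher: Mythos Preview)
Your argument is correct. The paper does not actually prove this proposition; it merely cites \cite{Sty08} for the result and adds a remark that periodicity of the shifts guarantees a non-Blaschke condition sufficient for convergence \cite{MasOR17}. Your proof is precisely the standard one that underlies such citations: pass to the quasi-Weierstra{\ss} form, observe that the projection $\bPir$ annihilates the nilpotent block so that the error recursion reduces to the ordinary ADI iteration matrix $\bM_{\bJ}(p) = (I+p\bJ)^{-1}(I-\overline{p}\bJ)$ on the finite part, and exploit periodicity together with $\rho(\bM_{\mathrm{c}})<1$ and Gelfand's formula to conclude. All the ingredients you list---the eigenvalue computation $|1-\overline{p}\lambda|^2 - |1+p\lambda|^2 = -4\,\mathrm{Re}(p)\,\mathrm{Re}(\lambda)$, the commutativity of the Cayley factors as rational functions of $\bJ$, and the need to argue per full period because $\bJ$ is not assumed diagonalizable---are the right ones and are assembled correctly.

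The only comparison worth noting is that the paper's remark points to the more general non-Blaschke viewpoint: periodicity is one convenient way to ensure $\sum_k (1-|\phi_k(\lambda)|) = \infty$ for every finite eigenvalue $\lambda$, where $\phi_k$ is the Cayley factor, but the convergence statement extends to non-periodic shift sequences satisfying that condition. Your proof is tailored to the periodic case stated in the proposition and does not need this generality.
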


\begin{remark}
 The periodicity of the shift parameters ensures that they fulfill a \emph{non-Blaschke condition} that is sufficient for the convergence of the ADI iteration \cite{MasOR17}.
\end{remark}

To work with the ADI iteration more efficiently, our aim is to compute low-rank factors of $\bP_{\mathrm{p}}$, i.\,e., we aim to compute a tall and skinny matrix $\bZ$ such that $\bP_{\mathrm{p}}\approx\bZ\bZ^{\mathrm{H}}$.
We can represent the iteration \eqref{eq:ADIiteration} by the low-rank factors of $\bP_{k} = \bZ_k\bZ_k^{\mathrm{H}}$ with
\begin{align*}
\bZ_k &= \begin{bmatrix}
\kappa(p_k) \bS(p_k) \bPil\bB & \bS(p_k)\bR(p_k)\bZ_{k-1}
\end{bmatrix}\\
&=\big[
\kappa(p_k) \bS(p_k) \bPil\bB \quad  
\kappa(p_{k-1})\bS(p_k)\bR(p_k)\bS(p_{k-1})\bPil\bB \\ 
&\hspace*{150pt}\ldots \quad \kappa(p_1)\bS(p_k)\bR(p_k)\cdot\ldots \cdot 
\bS(p_2)\bR(p_2)\bS(p_{1})\bPil\bB
\big],
\end{align*}
where $\kappa(p_k)=\sqrt{-\mathrm{Re}(p_k)}$ and $\bZ_{0}$ is chosen as the empty matrix in $\mathbb{R}^{N , 0}$.
We note that the following properties hold for all $j,\,k = 0,\,1,\,2,\ldots$:
\begin{align}\label{eq:commut}
\begin{split}
&\bS(p_k)\bA \bS(p_j) = \bS(p_j)\bA \bS(p_k), \qquad \bR(p_k)\bA^{-1}\bR(p_j) = \bR(p_j)\bA^{-1}\bR(p_k),\\
&\bS(p_k)\bR(p_j) = \bA^{-1}\bR(p_j)\bS(p_k)\bA.
\end{split}
\end{align}
We further define
\[
\bB_0:=\kappa(p_k) \bS(p_k) \bPil\bB\qquad\text{and} \qquad \bF_j := \frac{\kappa(p_j)}{\kappa(p_{j-1})}\bS(p_j)\bR(p_{j+1}), \quad j=1,\,\ldots,\,k.
\]
Using \eqref{eq:commut}, we obtain 
\begin{align*}
\bZ_k 
&= \begin{bmatrix}
\bB_0 & \bF_{k-1}\bB_0 & \ldots & \bF_{1}\cdot\ldots\cdot \bF_{k-1}\bB_0
\end{bmatrix}.
\end{align*}

It remains to solve the discrete-time Lvapunov equation \eqref{eq:impropLE_contr}.
Under the assumption that $\bA$ is nonsingular,  \eqref{eq:impropLE_contr} is equivalent to the transformed discrete-time Lyapunov equation 
\begin{equation*}
\bP_{\mathrm{i}} - \bA^{-1}\bE \bP_{\mathrm{i}}\bE^{\mathrm{T}}\bA^{-\mathrm{T}} = \bA^{-1}(I_N- \bPil) \bB\bB^{\mathrm{T}} (I_N - \bPil)^{\mathrm{T}}\bA^{-\mathrm{T}}, \qquad 0 = \bPir \bP_{\mathrm{i}} \bPir^{\mathrm{T}}.
\end{equation*}
This equation could be solved with the Smith method \cite{Sty08}. Since $\bA^{-1}(I_N-\bPil) = (I_N-\bPir)\bA^{-1}$ and the matrix $ (I_N-\bPir)\bA^{-1}\bE = \bA^{-1}\bE (I_N-\bPir)$ is nilpotent with the nilpotency index $\nu$ of the matrix $\bN$ of the quasi-Weierstrass form of the pencil $s\bE-\bA$, the iteration stops after $\nu$ steps. This leads to the unique solution 
\[
\bP_{\mathrm{i}} = \sum_{k=0}^{\nu-1} (\bA^{-1}\bE)^k  (I_N- \bPir)\bA^{-1} \bB\bB^{\mathrm{T}}\bA^{-\mathrm{T}} (I_N - \bPir)^{\mathrm{T}}  ((\bA^{-1}\bE)^{\mathrm{T}})^k.
\]
We note that we have to solve multiple linear systems with the matrix $\bA$ to 
obtain the solution $\bP_{\mathrm{i}}$. Therefore, we can utilize the sparse LU 
decomposition to do this efficiently. 
Instead of computing the full matrix $\bP_{\mathrm{i}}$ we can also generate the low-rank factors $\bP_{\mathrm{i}} = \bY\bY^{\mathrm{T}}$ as 
\[
\bY = \begin{bmatrix}
(I- \bPir)\bA^{-1} \bB & 
\bA^{-1}\bE  (I- \bPir)\bA^{-1} \bB &
\ldots &
(\bA^{-1}\bE)^{\nu-1}  (I- \bPir)\bA^{-1} \bB
\end{bmatrix}.
\]

\begin{remark}
In many relevant applications, the solution $\bP_{\mathrm{i}}$ can be explicitly calculated by making use of specific block structures in $s\bE-\bA$. Moreover, the improper Hankel singular values are often zero (namely, when the FOM's transfer function is strictly proper) in which case we do not have to solve the improper Lyapunov equations at all,
see also \cite{morSty06}.
\end{remark}

\subsubsection{Alternative approach to determine the algebraic  
parts}\label{ssec:D0D1determ}
Since the reduced basis method presented later in this paper is applied only to
the differential part of the system, in this section we discuss ways to 
construct reduced representations of the algebraic part. One way is to use the 
reduction provided by the improper Gramians as explained in 
Subsection~\ref{ChapterBT}. This is a suitable choice for parameter-independent 
problems. 
However, it is difficult to build a reduced representation for 
parameter-dependent problems, if the improper part depends on parameters. 
Then the discrete-time Lyapunov equations have to be solved for each parameter value of interest in the online phase which can be too expensive. 
For this reason, in this subsection, we present an alternative approach that is also viable in the parametric setting and allows an efficient evaluation in the online phase (see also Subsection~\ref{ssec:treatmentalg}).   

Our method is inspired by \cite{morMosSMV22} in which the 
polynomial part of the transfer functions of DAE systems of index at most two 
is approximated, 
but the methodology can also be extended to systems of even higher index. 
Consider the transfer function $\boldsymbol{\mathcal{G}}(s)$ of system 
\eqref{eq:FOM} (for a fixed parameter) which can be decomposed into its 
strictly proper part and a polynomial part
\[
\boldsymbol{\mathcal{G}}(s) = \boldsymbol{\mathcal{G}}_{\mathrm{sp}}(s) + 
\boldsymbol{\mathcal{G}}_{\mathrm{poly}}(s),
\]
where the polynomial part is given as
\[
\boldsymbol{\mathcal{G}}_{\mathrm{poly}}(s) = -\sum_{k=0}^{\nu-1} 
 \bC\bT^{-1}\begin{bmatrix} 0 & 0 \\ 0 & \bN^k \end{bmatrix}\bW^{-1}\bB s^k,
\]
with $\bT$, $\bW$, and $\bN$ with index of nilpotency $\nu$ as in 
\eqref{eq:QWF}, see \cite{morSty04} for the details. Hence, for $\bW^{-1}\bB =: 
\begin{bsmallmatrix}
\bB_1 \\ \bB_2
\end{bsmallmatrix}$ and $\bC\bT^{-1} =: \begin{bmatrix}
\bC_1 & \bC_2
\end{bmatrix}$ partioned conforming to  the partioning of \eqref{eq:QWF}, the 
polynomial part can be rewritten as
\begin{align*}
\boldsymbol{\mathcal{G}}_{\mathrm{poly}}(s) = - \sum_{k=0}^{\nu-1} \bC_2 \bN^k 
\bB_2 s^k,
\end{align*}
where $\bM_k = -\bC_2 \bN^k \bB_2$, $k=0,\,\ldots,\,\nu-1$ denote the $k$-th 
\emph{Markov parameters} of the transfer function.
As shown in \cite{morAntGH20b}, the strictly proper part 
$\boldsymbol{\mathcal{G}}_{\mathrm{sp}}(\ri\omega)$ converges to zero 
for $\omega \to \infty$. For this reason we can approximate the Markov 
parameters if we consider sufficiently large values of $\omega\in\mathbb{R}$. 
We exemplify the methodology for a few cases.
\paragraph{The index one case}
In this case we simply have 
\begin{equation*}
 \bM_0 \approx \boldsymbol{\mathcal{G}}(\ri\omega)
\end{equation*}
for a sufficiently large value of $\omega\in\mathbb{R}$ and moreover, $\bM_k = 0$ for $k 
\ge 1$.

\paragraph{The index two case}
In this case, $\bM_k = 0$ for $k \ge 2$ and hence $\boldsymbol{\mathcal{G}}_{\mathrm{poly}}(s) = \bM_0 + \bM_1 s$.
By inserting $\ri\omega\in\ri\mathbb{R}$ we obtain that 
\[
\mathrm{Re}\left(\boldsymbol{\mathcal{G}}_{\mathrm{poly}}(\ri\omega)\right) = \bM_0,\qquad
\mathrm{Im}\left(\boldsymbol{\mathcal{G}}_{\mathrm{poly}}(\ri\omega)\right) = 
\omega\bM_1.
\]
Hence, for sufficiently large values 
$\omega\in\mathbb{R}$ we can approximate
\[
\bM_0 \approx \mathrm{Re}\left(\boldsymbol{\mathcal{G}}(\ri\omega)\right)
\qquad
\text{and}
\qquad
\bM_1 \approx\frac{\mathrm{Im}\left(\boldsymbol{\mathcal{G}}(\ri\omega)\right)}{\omega}.
\]

\paragraph{The index three case}

In the index three case, we approximate the polynomial part of the form 
$
\boldsymbol{\mathcal{G}}_{\mathrm{poly}}(s) = \bM_0 + \bM_1 s + \bM_2 s^2.
$
Inserting $\ri\omega\in\ri\mathbb{R}$ into the polynomial part of the transfer function $\boldsymbol{\mathcal{G}}_{\mathrm{poly}}$
provides a real and an imaginary part 
\[
\mathrm{Re}\left(\boldsymbol{\mathcal{G}}_{\mathrm{poly}}(\ri\omega)\right) = \bM_0 - \omega^2\bM_2,\qquad
\mathrm{Im}\left(\boldsymbol{\mathcal{G}}_{\mathrm{poly}}(\ri\omega)\right) = \omega\bM_1.
\]
Then the Markov parameters are approximated by 
\[
\bM_2 \approx \frac{\mathrm{Re}\left(\boldsymbol{\mathcal{G}}(\ri\omega_1)-\boldsymbol{\mathcal{G}}(\ri\omega_2)\right)}{\omega_2^2 - \omega_1^2},\quad 
\bM_1 \approx \frac{\mathrm{Im}\left(\boldsymbol{\mathcal{G}}(\ri\omega)\right)}{\omega},\quad 
\bM_0 \approx \mathrm{Re}\left(\boldsymbol{\mathcal{G}}(\ri\omega)\right) + 
\omega^2 \bM_2
\]
for sufficiently large $\omega$, $\omega_1$, and $\omega_2$ with 
$\omega_1\neq\omega_2$.

\paragraph{System realization from Markov parameters}
If the polynomial part $\boldsymbol{\mathcal{G}}_{\mathrm{poly}}(s) = 
\sum_{k=0}^{\nu-1} \bM_k s^k$ has been extracted and all Markov parameters 
$\bM_k$, $k=0,\,\ldots,\,\nu-1$ have been determined we can realize it by a 
DAE system as follows. Assume that $\bM_k = \bU_k \bSigma_k \bV_k^{\mathrm{T}}$ 
are compact SVDs of $\bM_k$ with positive definite $\bSigma_k \in 
\mathbb{R}^{r_k,r_k}$ for all $k=1,\,\ldots,\,\nu-1$. Then for 
$k=1,\,\ldots,\,\nu-1$ we define the matrices
\[
\widehat{\bB}_k := \begin{bmatrix} 0 \\ \vdots \\ 0 \\ \bV_k^{\mathrm{T}} 
 \end{bmatrix} \in \mathbb{R}^{k\cdot r_k,m}, \quad \widehat{\bC}_k := 
\begin{bmatrix} -\bU_k & 0 & \cdots & 
0\end{bmatrix} \in \mathbb{R}^{p,k\cdot r_k}
\]
and
\[
\widehat{\bE}_k := \begin{bmatrix}
0 & \bSigma_k^{\frac{1}{k}} & & \\
& \ddots & \ddots & \\ 
& & \ddots & \bSigma_k^{\frac{1}{k}}  \\
& & & 0
\end{bmatrix} \in \mathbb{R}^{k\cdot r_k,k\cdot r_k},
\quad\text{so that}\quad
\widehat{\bE}_k^k = 
\begin{bmatrix}
0 & \cdots &  0 & \bSigma_{k} \\
& \ddots &  & 0 \\ 
& & \ddots &\vdots  \\
& & & 0
\end{bmatrix}.
\]
This yields 
\begin{align*}
 \widehat{\bC}_k \left(s\widehat{\bE}_k-I_{k\cdot r_k}\right)^{-1} \widehat{\bB}_k &= 
-\sum_{j=0}^{k} \widehat{\bC}_k \widehat{\bE}_k^{j} \widehat{\bB}_k s^j 
=-\widehat{\bC}_k \widehat{\bE}_k^{k} \widehat{\bB}_k s^k
 = \bM_ks^k.
\end{align*}
Now we obtain an overall realization of the algebraic part of the 
system as 
\begin{align*}
 \frac{\mathrm{d}}{\mathrm{d}t} \begin{bmatrix} \hat{\bE}_1 &  &  \\  & 
\ddots & \\ & & \hat{\bE}_{\nu-1} \end{bmatrix} \begin{bmatrix} \bz_1(t) \\ 
\vdots \\ \bz_{\nu-1}(t) \end{bmatrix} &= \begin{bmatrix} 
\bz_1(t) \\ \vdots \\ \bz_{\nu-1}(t) \end{bmatrix} + \begin{bmatrix} 
\hat{\bB}_1 \\ \vdots \\ \hat{\bB}_k \end{bmatrix} \bu(t), \\
\widehat{\by}(t) &= \begin{bmatrix} \widehat{\bC}_1 & \ldots & \hat{\bC}_k \end{bmatrix} 
\begin{bmatrix} \bz_1(t) \\ \vdots \\ \bz_{\nu-1}(t) \end{bmatrix} + \bM_0 
\bu(t).
\end{align*}


\section{Reduced basis method}\label{reduced basis method}
In this section, we return to the problem of reducing parameter-dependent 
systems. 
Since the solution of the Lyapunov equations is the most expensive part of 
balanced truncation, we aim to limit the number of Lyapunov equation solves.  
To achieve this, the reduced basis method presented by Son and Stykel in 
\cite{morSonS17} can be applied. Besides the regularity and asymptotic stability of the matrix pencil 
$s\bE(\mu)-\bA(\mu)$ for all $\mu \in \mathcal{D}$ we impose the following 
assumption on the problem \eqref{eq:FOM} for the rest of the paper. 
\begin{assumption}\label{assumpt}
We assume that $\mathcal{D} \subset \mathbb{R}^d$ is a bounded and connected 
domain. We assume further that the matrix-valued functions $\bE(\cdot)$, 
$\bA(\cdot)$, $\bB(\cdot)$, $\bC(\cdot)$ in \eqref{eq:FOM} as well as the left 
and right spectral projector functions $\bPil(\cdot)$ and $\bPir(\cdot)$ on the 
deflating subspace associated with the finite eigenvalues of  
$(s\bE-\bA)(\cdot)$ depend \emph{continuously} on the parameter $\mu$.
\end{assumption}
Assumption~\ref{assumpt} ensures that the solutions of the parameter-dependent projected Lyapunov equations also depend continuously on $\mu \in \mathcal{D}$. 
Then, also the ROM depends continuously on $\mu$ for a fixed reduced order.

In practice, many examples are continuously parameter-dependent, e.\,g., in electrical circuits, the resistances can be continuously dependent on the temperature. Moreover, inductances and capacitances may be assumed continuous if one wishes to tune them within a parameter optimization.  In other examples, for instance, mechanical systems, the parameters may represent (continuously varying) material constants. On the other hand, there are also examples where the dependence on the parameters is not continuous. This is, for example, the case for electrical circuits in which the network topology changes by switching. These discontinuities are often known since, in practice, the user has insight into the considered problem. In this case, the parameter domain could be decomposed into several subdomains, on which the reduction can be performed separately.

In the following we will focus on the reduction of the differential part of the 
DAE system \eqref{eq:FOM}. 
The main idea consists of finding a reduced representation of the solutions of the Lyapunov equations \eqref{eq:propLE_contr} of the form
\begin{equation}\label{RBMb}
\bP_{\mathrm{p}}(\mu) \approx \bZ(\mu) \bZ(\mu)^{\mathrm{H}}\quad\text{with}\quad \bZ(\mu) = \bV(\mu) \tilde{\bZ}(\mu)\quad \forall\,\mu \in \mathcal{D},
\end{equation}
where $\bV(\mu) \in \mathbb{R}^{N,r_{\bV}}$ has 
orthonormal columns with $\bPir(\mu) \bV(\mu) = \bV(\mu)$ and where $\bPir(\mu)$ 
denotes the projector onto the right deflating subspace of $s\bE(\mu) - \bA(\mu)$ for 
the parameter value $\mu$. 
Under the assumption that $\bV(\mu)^{\mathrm{T}}\bE(\mu)\bV(\mu)$ is invertible and the pencil $s\bV(\mu)^{\mathrm{T}}\bE(\mu)\bV(\mu) - \bV(\mu)^{\mathrm{T}}\bA(\mu)\bV(\mu)$ is regular and asymptotically stable (This condition can be guaranteed a priori by the strict dissipativity assumptions in Subsection~\ref{subsec:strictdissip}.), the low-rank factor $\tilde{\bZ}(\mu)$ solves the reduced Lyapunov equation  
\begin{multline}\label{RBMa}
\bV(\mu)^{\mathrm{T}}\bE(\mu)\bV(\mu)\tilde{\bZ}(\mu)\tilde{\bZ}(\mu)^{\mathrm{H}}\bV(\mu)^{\mathrm{T}}\bA(\mu)^{\mathrm{T}}\bV(\mu) \\ +\bV(\mu)^{\mathrm{T}}\bA(\mu)\bV(\mu)\tilde{\bZ}(\mu)\tilde{\bZ}(\mu)^{\mathrm{H}}\bV(\mu)^{\mathrm{T}}\bE(\mu)^{\mathrm{T}}\bV(\mu) = -\bV(\mu)^{\mathrm{T}}\bPil(\mu)\bB(\mu)\bB(\mu)^{\mathrm{T}}\bPil(\mu)^{\mathrm{T}}\bV(\mu).
\end{multline}
The equation $\bPir(\mu) \bV(\mu) = \bV(\mu)$ replaces the condition 
\begin{equation*}
 \bPir(\mu)\big(\bV(\mu)\tilde{\bZ}(\mu)\big)\big(\bV(\mu)\tilde{\bZ}(\mu)\big)^{\mathrm{H}}\bPir(\mu)^{\mathrm{T}} = \big(\bV(\mu)\tilde{\bZ}(\mu)\big)\big(\bV(\mu)\tilde{\bZ}(\mu)\big)^{\mathrm{H}}.
\end{equation*} 

In practice, we aim to determine a matrix $\bV_{\mathrm{glob}} \in 
\mathbb{R}^{N,r_{\mathrm{glob}}}$ with $N \gg r_{\mathrm{glob}}$ that contains 
the information of $\bV(\mu)$ for all $\mu \in \mathcal{D}$ globally. 
When we have determined such a $\bV_{\mathrm{glob}}$, we set
\begin{equation} \label{eq:boldV}
 \bV(\mu) = \operatorname{orth}\big(\bPir(\mu)\bV_{\mathrm{glob}}\big),
\end{equation}
where $\operatorname{orth}(\bPir(\mu)\bV_{\mathrm{glob}})$ denotes the orthonormalized columns of $\bPir(\mu)\bV_{\mathrm{glob}}$.
If the matrix $\bV_{\mathrm{glob}}$ has been found, then the ROM for a particular parameter $\mu \in \mathcal{D}$ can be determined very efficiently in the so-called \emph{online phase}, where one simply solves \eqref{RBMa} with \eqref{RBMb} and \eqref{eq:boldV} to determine the two Gramians and then projects the system as in Subsection~\ref{ChapterBT}.

The computation of the \emph{reduced basis} $\bV_{\mathrm{glob}}$ is done beforehand in the \emph{offline phase}. 
There, we solve the full-order Lyapunov equation only for those parameters, where their solutions are currently approximated worst to enrich the current reduced basis. A posteriori error estimators are employed on a test parameter set to find these points efficiently. 

We determine $\bV_{\mathrm{glob}}$ by considering a test parameter set 
$\mathcal{D}_{\mathrm{Test}}\subset \mathcal{D}$.
We begin by computing a low-rank factor $\bZ(\mu_{1})$ in $\mu_{1}\in\mathcal{D}_{\mathrm{Test}}$ such that $\bP_{\mathrm{p}}(\mu_{1})=\bZ(\mu_{1})\bZ(\mu_{1})^{\mathrm{H}}$ solves the projected Lyapunov equation
\begin{multline} \label{eq:projL}
\bE(\mu)\bP_{\mathrm{p}}(\mu)\bA(\mu)^{\mathrm{T}} + \bA(\mu)\bP_{\mathrm{p}}(\mu)\bE(\mu)^{\mathrm{T}} = - \bPil(\mu)\bB(\mu)\bB(\mu)^{\mathrm{T}}\bPil(\mu)^{\mathrm{T}},\\ \bPir(\mu) \bP_{\mathrm{p}}(\mu)\bPir(\mu)^{\mathrm{T}} = \bP_{\mathrm{p}}(\mu)
\end{multline}
for $\mu = \mu_1$.

The first reduced basis is then given by $\bV_{\mathrm{glob}} := \operatorname{orth}(\bZ(\mu_{1}))$. Next we determine the test parameter $\mu_{2}\in\mathcal{D}_{\mathrm{Test}}$ for which the solution $\bP_{\mathrm{p}}(\mu_{2})$ of the Lyapunov equation \eqref{eq:projL} is approximated worst by using \eqref{RBMb}, \eqref{RBMa}, and \eqref{eq:boldV}.
For that, one of the error estimators $\Delta_{\bV}(\mu)$ presented in Subsection~\ref{ErrEst} is utilized.
For this parameter $\mu_{2}$, we solve the projected Lyapunov equation \eqref{eq:projL} with $\mu = \mu_2$ to obtain the low-rank factor $\bZ(\mu_{2})$. Then we define the new reduced basis by setting $\bV_{\mathrm{glob}} := \operatorname{orth}([\bV_{\mathrm{glob}},\,  \bZ(\mu_{2})])$.
This procedure is repeated until the error estimator $\Delta_{\bV}$ is smaller than a prescribed tolerance for every test parameter in $\mathcal{D}_{\mathrm{Test}}$.
This method results in Algorithm~\ref{AlgoRBMOff}.
\begin{algorithm}[bt]
	\caption{Reduced basis method for projected Lyapunov equations (offline phase)}
	\label{AlgoRBMOff}
		\begin{algorithmic}[1] 
		\Require Dynamical system \eqref{eq:FOM}, test parameter set $\mathcal{D}_{\mathrm{Test}}$, tolerance $\mathrm{Tol}$.
	      \Ensure Reduced basis matrix $\bV_{\mathrm{glob}}$
			\State Choose any $\mu_{1}\in\mathcal{D}_{\mathrm{Test}}$.
			\State Solve the projected Lyapunov equation \eqref{eq:projL} for $\mu = \mu_{1}$ to obtain $\bZ(\mu_{1})$.\label{StepSol1}
			\State Set $\mathcal{M}:=\{\mu_{1}\}$.
			\State Set $\bV_{\mathrm{glob}} := \operatorname{orth}(\bZ(\mu_{1}))$.\label{StepONB1}
			\State Set $\widehat{\mu} := \operatorname{arg\,max}_{\mu\in\mathcal{D}_{\mathrm{Test}}\setminus\mathcal{M}}\Delta_{\bV}(\mu)$.\label{Sr5}
			\State Set $\Delta_{\bV}^{\max}:=\Delta_{\bV}(\widehat{\mu})$.
			\While{$\Delta_{\bV}^{\max}>\mathrm{Tol}$}
			\State Solve the projected Lyapunov equation \eqref{eq:projL} for $\mu = \widehat{\mu}$ to obtain $\bZ(\widehat{\mu})$.\label{StepSol2}
			\State Set $\mathcal{M} := \mathcal{M}\cup\{\widehat{\mu}\}$.
			\State Set $\bV_{\mathrm{glob}} := \operatorname{orth}([\bV_{\mathrm{glob}},\,\bZ(\widehat{\mu})])$.\label{StepONB2}
			\State Set $\widehat{\mu}:= \operatorname{arg\,max}_{\mu\in\mathcal{D}_{\mathrm{Test}}\setminus\mathcal{M}}\Delta_{\bV}(\mu)$.\label{S11}
			\State Set $\Delta_{\bV}^{\max}:=\Delta_{\bV}(\widehat{\mu})$.
			\EndWhile
		\end{algorithmic}
\end{algorithm}

\begin{remark}
 Under the assumption that the projected Lyapunov equations \eqref{eq:propLE_contr},\,\eqref{eq:propLE_observ}  are solved \emph{exactly} for all parameters $\mu \in \mathcal{D}$ using the reduced Lyapunov equations of the form \eqref{RBMa}, the $\mathcal{H}_\infty$ error bound \eqref{ErrBo} is also valid in the entire parameter domain. Unfortunately, this bound is no longer valid if numerical approximation errors affect the numerically computed Gramians which is already the case in the parameter-independent case. A general and in-depth analysis of the effect of such errors on the Hankel singular values is still an open problem. On the other hand, the numerically computed Hankel singular values can still be used to obtain an \emph{approximate} error bound.
\end{remark}

\subsection{Error estimation}\label{ErrEst}

We want to estimate the error $\mathfrak{E}(\mu):= \bP_{\mathrm{p}}(\mu)-\bZ(\mu)\bZ(\mu)^{\mathrm{H}}$ with the help of the residual
\begin{equation*}
\mathfrak{R}(\mu):=\bA(\mu)\bZ(\mu)\bZ(\mu)^{\mathrm{H}}\bE(\mu)^{\mathrm{T}}+\bE(\mu)\bZ(\mu)\bZ(\mu)^{\mathrm{H}}\bA(\mu)^{\mathrm{T}}+\bPil(\mu)\bB(\mu)\bB(\mu)^{\mathrm{T}}\bPil(\mu)^{\mathrm{T}}.
\end{equation*}
\begin{lemma}\label{lemma:errEq}
Let $\mu\in\mathcal{D}$ be given. Let the matrix pencil $s\bE(\mu)-\bA(\mu)$ with $\bE(\mu),\,\bA(\mu) \in\mathbb{R}^{N, N}$ be regular and asymptotically stable. Assume that the left and right spectral projectors onto the finite spectrum of $s\bE(\mu) - \bA(\mu)$ are denoted by $\bPil(\mu),\,\bPir(\mu) \in\mathbb{R}^{N, N}$.
Let the controllability Gramian $\bP_{\mathrm{p}}(\mu) \in\mathbb{R}^{N, N}$ be defined as in \eqref{eq:contrGram} and let it be approximated as described in equation \eqref{RBMb}.
Then the error $\mathfrak{E}(\mu):= \bP_{\mathrm{p}}(\mu)-\bZ(\mu)\bZ(\mu)^{\mathrm{H}}$ solves the projected error equation
\begin{align}\label{eq:ErrEq}
\begin{split}
\bA(\mu)\mathfrak{E}(\mu)\bE(\mu)^{\mathrm{T}}+\bE(\mu)\mathfrak{E}(\mu)\bA(\mu)^{\mathrm{T}}&= - \mathfrak{R}(\mu) \\ 
                          &= - \bPil(\mu)\mathfrak{R}(\mu)\bPil(\mu)^{\mathrm{T}},\qquad \mathfrak{E}(\mu) = \bPir(\mu) \mathfrak{E}(\mu) \bPir(\mu)^{\mathrm{T}}.
\end{split}                          
\end{align}
\end{lemma}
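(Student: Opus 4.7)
The plan is to substitute the definition of $\mathfrak{E}(\mu)$ into the left-hand side of \eqref{eq:ErrEq} and reduce everything to the residual $\mathfrak{R}(\mu)$. I would work at a fixed $\mu \in \mathcal{D}$ and drop the argument throughout. The two ingredients I need are: (i) the exact Gramian $\bP_{\mathrm{p}}$ satisfies the projected continuous-time Lyapunov equation \eqref{eq:projL}, i.\,e.,
\begin{equation*}
 \bA \bP_{\mathrm{p}} \bE^{\mathrm{T}} + \bE \bP_{\mathrm{p}} \bA^{\mathrm{T}} = -\bPil \bB \bB^{\mathrm{T}} \bPil^{\mathrm{T}}, \qquad \bPir \bP_{\mathrm{p}} \bPir^{\mathrm{T}} = \bP_{\mathrm{p}};
\end{equation*}
and (ii) the constraint $\bPir \bV = \bV$ imposed in \eqref{RBMb}, which via $\bZ = \bV \tilde{\bZ}$ yields $\bPir \bZ = \bZ$.

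Next, I would plug $\mathfrak{E} = \bP_{\mathrm{p}} - \bZ \bZ^{\mathrm{H}}$ into the expression $\bA \mathfrak{E} \bE^{\mathrm{T}} + \bE \mathfrak{E} \bA^{\mathrm{T}}$, split the two terms, apply the projected Lyapunov equation above to the $\bP_{\mathrm{p}}$-part, and recognize the remaining terms together with $\bPil \bB \bB^{\mathrm{T}} \bPil^{\mathrm{T}}$ as exactly $-\mathfrak{R}$. This already gives $\bA \mathfrak{E} \bE^{\mathrm{T}} + \bE \mathfrak{E} \bA^{\mathrm{T}} = -\mathfrak{R}$, so the remaining task is to identify $\mathfrak{R}$ with $\bPil \mathfrak{R} \bPil^{\mathrm{T}}$. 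For this I would invoke the commutation relations $\bA \bPir = \bPil \bA$ and $\bE \bPir = \bPil \bE$, which follow directly from the quasi-Weierstra{\ss} form \eqref{eq:QWF} together with the definitions of $\bPil$ and $\bPir$. Combined with $\bPir \bZ = \bZ$, these give $\bA \bZ = \bPil \bA \bZ$ and $\bE \bZ = \bPil \bE \bZ$, so that each summand of $\mathfrak{R}$ is sandwiched between $\bPil$ on the left and $\bPil^{\mathrm{T}}$ on the right; idempotence of $\bPil$ then yields $\mathfrak{R} = \bPil \mathfrak{R} \bPil^{\mathrm{T}}$.

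Finally, the projection identity $\mathfrak{E} = \bPir \mathfrak{E} \bPir^{\mathrm{T}}$ follows immediately by applying $\bPir(\cdot)\bPir^{\mathrm{T}}$ to the definition of $\mathfrak{E}$: the term $\bPir \bP_{\mathrm{p}} \bPir^{\mathrm{T}}$ reduces to $\bP_{\mathrm{p}}$ by the second part of the projected Lyapunov equation, while $\bPir \bZ \bZ^{\mathrm{H}} \bPir^{\mathrm{T}} = \bZ \bZ^{\mathrm{H}}$ by $\bPir \bZ = \bZ$. I do not anticipate any serious obstacle here; the only delicate step is the derivation of $\bA \bPir = \bPil \bA$ and $\bE \bPir = \bPil \bE$ from the quasi-Weierstra{\ss} form, but this is a one-line computation using the block-diagonal structure in \eqref{eq:QWF}. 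Everything else is linear substitution.
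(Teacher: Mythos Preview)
Your proposal is correct and follows essentially the same approach as the paper: substitute $\mathfrak{E} = \bP_{\mathrm{p}} - \bZ\bZ^{\mathrm{H}}$, use the projected Lyapunov equation for the $\bP_{\mathrm{p}}$-part, and invoke the commutation relations $\bPil\bA = \bA\bPir$, $\bPil\bE = \bE\bPir$ together with $\bPir\bZ = \bZ$ to insert the projectors. The only cosmetic difference is that the paper inserts $\bPil(\cdot)\bPil^{\mathrm{T}}$ directly into the expanded left-hand side rather than first writing it as $-\mathfrak{R}$ and then showing $\mathfrak{R} = \bPil\mathfrak{R}\bPil^{\mathrm{T}}$, but the computation is identical.
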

\begin{proof}
The condition 
\[
\mathfrak{E}(\mu) = \bPir(\mu) \mathfrak{E}(\mu) \bPir(\mu)^{\mathrm{T}}
\]
follows from $\bP_{\mathrm{p}}(\mu) = \bPir(\mu) \bP_{\mathrm{p}}(\mu) \bPir(\mu)^{\mathrm{T}}$ and $\bZ(\mu) = \bPir(\mu) \bZ(\mu)$.
For the remaining equation we consider its left-hand side
\begin{align*}
&\bA(\mu)\mathfrak{E}(\mu)\bE(\mu)^{\mathrm{T}}+\bE(\mu)\mathfrak{E}(\mu)\bA(\mu)^{\mathrm{T}}\\
&= \bA(\mu)\bP_{\mathrm{p}}(\mu)\bE(\mu)^{\mathrm{T}}+\bE(\mu)\bP_{\mathrm{p}}(\mu)\bA(\mu)^{\mathrm{T}} - \bA(\mu)\bZ(\mu)\bZ(\mu)^{\mathrm{H}}\bE(\mu)^{\mathrm{T}} - \bE(\mu)\bZ(\mu)\bZ(\mu)^{\mathrm{H}}\bA(\mu)^{\mathrm{T}}\\
&= -\bPil(\mu)\bB(\mu)\bB(\mu)^{\mathrm{T}}\bPil(\mu)^{\mathrm{T}} - \bA(\mu)\bZ(\mu)\bZ(\mu)^{\mathrm{H}}\bE(\mu)^{\mathrm{T}} - \bE(\mu)\bZ(\mu)\bZ(\mu)^{\mathrm{H}}\bA(\mu)^{\mathrm{T}}.
\end{align*}
With $\bPil(\mu) \bA(\mu) = \bA(\mu)\bPir(\mu)$, $\bPil(\mu) \bE(\mu) = \bE(\mu)\bPir(\mu)$ and $\bZ(\mu) = \bPir(\mu)\bZ(\mu)$ we obtain 
\begin{align*}
&\bA(\mu)\mathfrak{E}(\mu)\bE(\mu)^{\mathrm{T}}+\bE(\mu)\mathfrak{E}(\mu)\bA(\mu)^{\mathrm{T}}\\
&= -\bPil(\mu)\bB(\mu)\bB(\mu)^{\mathrm{T}}\bPil(\mu)^{\mathrm{T}} - \bPil(\mu)\bA(\mu)\bZ(\mu)\bZ(\mu)^{\mathrm{H}}\bE(\mu)^{\mathrm{T}}\bPil(\mu)^{\mathrm{T}} \\
&\hspace*{240pt}- \bPil(\mu)\bE(\mu)\bZ(\mu)\bZ(\mu)^{\mathrm{H}}\bA(\mu)^{\mathrm{T}}\bPil(\mu)^{\mathrm{T}}
\end{align*}
which proves the statement.
\end{proof}
Similarly to \cite{morSonS17}, we consider the linear system
\begin{equation*}
	\bcL(\mu)\bx(\mu) = \bb(\mu), \quad \bx(\mu)=\bPir^{\otimes}(\mu)\bx(\mu)
\end{equation*}
with
\begin{align}\label{eq:defL}
\begin{split}
\bcL(\mu) &:=-\bA(\mu)\otimes \bE(\mu)-\bE(\mu)\otimes \bA(\mu),  \\
\bx(\mu) &:= \mathrm{vec}\left(\bP_{\mathrm{p}}(\mu)\right),\quad \bb(\mu) :=\mathrm{vec}\left(\bPil(\mu)\bB\bB^{\mathrm{T}}\bPil(\mu)^{\mathrm{T}}\right),\quad \bPir^{\otimes}(\mu) := \bPir(\mu)\otimes \bPir(\mu),
\end{split}
\end{align}
where the operator $\otimes$ denotes the \emph{Kronecker product} and 
$\mathrm{vec}$ the \emph{vectorization operator} that stacks the columns of 
the matrix on top of one another. This linear system is equivalent to the 
projected Lyapunov equation \eqref{eq:projL}. 
We note that $\bcL(\mu)$ is a singular matrix and that the unique solvability is enforced by the additional restriction given by the projection with $\bPir^{\otimes}(\mu)$.

Using the linear system representation and the abbreviations $\be(\mu):= \vect(\mathfrak{E}(\mu))$, $\br(\mu) := \vect(\mathfrak{R}(\mu))$, 
we can rewrite the error equation \eqref{eq:ErrEq} as
\begin{equation*}
	\bcL(\mu)\be(\mu) = \br(\mu), \quad \be(\mu)=\bPir^{\otimes}(\mu)\be(\mu). 
\end{equation*}
This equation is equivalent to
\begin{equation} \label{eq:defPil}
	\bPil^{\otimes}(\mu)\bcL(\mu)\bPir^{\otimes}(\mu)\be(\mu) = \bPil^{\otimes}(\mu)\br(\mu) = \br(\mu), \quad \text{where} \quad \bPil^{\otimes}(\mu) := \bPil(\mu)\otimes \bPil(\mu).
\end{equation}
Due to the unique solvability of the latter, we obtain
\begin{equation*}
	\be(\mu) = \left( \bPil^{\otimes}(\mu)\bcL(\mu)\bPir^{\otimes}(\mu) \right)^\dagger \br(\mu), 
\end{equation*}
where $(\cdot)^\dagger$ denotes the Moore-Penrose inverse. Then we can estimate
\begin{align}\label{eq:err1_part1}
\begin{split}
 \left\|\mathfrak{E}(\mu)\right\|_{\mathrm{F}} &= \left\|\be(\mu)\right\|_{2} \\
 &\le \big\|\left(\bPil^{\otimes}(\mu)\bcL(\mu)\bPir^{\otimes}(\mu) \right)^\dagger\big\|_2 \left\|\br(\mu)\right\|_2 \\
 &=\frac{1}{\sigma_{\min}^{+}\left(\bPil^{\otimes}(\mu)\bcL(\mu)\bPir^{\otimes}(\mu) \right)} \left\|\mathfrak{R}(\mu)\right\|_{\mathrm{F}} \le \frac{1}{\alpha(\mu)} \left\|\mathfrak{R}(\mu)\right\|_{\mathrm{F}} =: \Delta_\bV^{(1)}(\mu).
\end{split}
\end{align}

Here, $\sigma_{\min}^{+}(\cdot)$ denotes the smallest positive singular value of its matrix argument and $\alpha(\mu)$ is an easily computable lower bound of $\sigma_{\min}^{+}\left(\bPil^{\otimes}(\mu)\bcL(\mu)\bPir^{\otimes}(\mu)\right)$.
The bound $\Delta_\bV^{(1)}(\mu)$ is the first possible error estimator that provides a rather conservative bound in practice.

To improve the estimator \eqref{eq:err1_part1}, we consider the error bound presented in \cite{Schmidt2018}.
Assume that $\widehat{\mathfrak{E}}(\mu)\approx \mathfrak{E}(\mu)$ is an error 
estimate that is not necessarily an upper bound. 
An error bound $\Delta_{\bV}^{(2)}(\mu)$ is derived by
\begin{align}\label{ErrEstF}
\begin{split}
	{\big\|\mathfrak{E}(\mu)\big\|}_{\mathrm{F}} &= {\big\|\mathfrak{E}(\mu)+\widehat{\mathfrak{E}}(\mu)-\widehat{\mathfrak{E}}(\mu)\big\|}_{\mathrm{F}} 
	\leq {\big\|\widehat{\mathfrak{E}}(\mu)\big\|}_{\mathrm{F}}+{\big\|\mathfrak{E}(\mu)-\widehat{\mathfrak{E}}(\mu)\big\|}_{\mathrm{F}}\\ &\leq \big\|\widehat{\mathfrak{E}}(\mu)\big\|_{\mathrm{F}} +\frac{1}{\alpha(\mu)} {\big\|\widehat{\mathfrak{R}}(\mu)\big\|}_{\mathrm{F}}:=\Delta_{\bV}^{(2)}(\mu),
\end{split}
\end{align}
where the residual $\widehat{\mathfrak{R}}(\mu)$ is defined as 
\begin{multline*}
\widehat{\mathfrak{R}}(\mu):= \bA(\mu)\left(\bZ(\mu)\bZ(\mu)^{\mathrm{H}}+\widehat{\mathfrak{E}}(\mu)\right)\bE(\mu)^{\mathrm{T}}\\\ +\bE(\mu)\left(\bZ(\mu)\bZ(\mu)^{\mathrm{H}}+\widehat{\mathfrak{E}}(\mu)\right)\bA(\mu)^{\mathrm{T}}+\bPil(\mu)\bB(\mu)\bB(\mu)^{\mathrm{T}}\bPil(\mu)^{\mathrm{T}}.
\end{multline*}

It remains to obtain an error estimate $\widehat{\mathfrak{E}}(\mu)$ for all $\mu \in \mathcal{D}_{\mathrm{Test}}$. To do so, we solve the error equation \eqref{eq:ErrEq} approximately by modifying the projected ADI iteration from Section~\ref{ChapterLE} and the reduced basis method in Algorithm~\ref{AlgoRBMOff}.

The right-hand side of the error equation \eqref{eq:ErrEq} can be written as the product $\bB_{\mathrm{l}}(\mu)\bB_{\mathrm{r}}(\mu)^{\mathrm{H}}$, where
\begin{align*}
\bB_{\mathrm{l}}(\mu) &:= \bPil(\mu)\begin{bmatrix}
\bA(\mu)\bZ(\mu)&
\bE(\mu)\bZ(\mu)&
\bB(\mu)
\end{bmatrix},\\
\bB_{\mathrm{r}}(\mu)&:= \bPil(\mu)\begin{bmatrix}
\bE(\mu)\bZ(\mu)&
\bA(\mu)\bZ(\mu)&
\bB(\mu)
\end{bmatrix}.
\end{align*}

For simplicity of notation we consider the modified ADI iteration for the parameter-independent case such that $\mathfrak{E}(\mu) \equiv  \mathfrak{E}$, $\bB_{\mathrm{l}}(\mu)\equiv \bB_{\mathrm{l}}$, $\bB_{\mathrm{r}}(\mu)\equiv \bB_{\mathrm{r}}$, and $\bPil(\mu) \equiv \bPil$.
Analogously to the derivation in Section~\ref{ChapterLE} (and under the same 
conditions), 
it can be shown that the iteration 
\begin{align*}
\mathfrak{E}_k =\bZ_{\mathrm{l}, k}(\bZ_{\mathrm{r},k})^{\mathrm{H}}&= \bS(p_k) \bR(p_k)\mathfrak{E}_{k-1}\bR(p_k)^{\mathrm{H}}\bS(p_k)^{\mathrm{H}}  -2\mathrm{Re}(p_k)\bS(p_k)\bB_{\mathrm{l}}\bB_{\mathrm{r}}^{\mathrm{T}}\bS(p_k)^{\mathrm{H}}\\
&= \bS(p_k) \bR(p_k)\bZ_{\mathrm{l}, k-1}(\bZ_{\mathrm{r},k-1})^{\mathrm{H}}\bR(p_k)^{\mathrm{H}} \bS(p_k)^{\mathrm{H}} -2\mathrm{Re}(p_k)\bS(p_k)\bB_{\mathrm{l}}\bB_{\mathrm{r}}^{\mathrm{T}}\bS(p_k)^{\mathrm{H}}
\end{align*}
converges to the solution $\mathfrak{E}$ of \eqref{eq:ErrEq}, where $\bS(p_k)$ and $\bR(p_k)$ are defined as in \eqref{eq:ADI_ST} for a shift parameter $p_k\in\mathbb{C}$ with $\operatorname{Re}(p_k) < 0$.
The factors $\bZ_{\mathrm{l},k}$ and $\bZ_{\mathrm{r},k}$ can be written 
as
\begin{align*}
	\bZ_{\mathrm{l},k} &= \begin{bmatrix}
	\bB_{\mathrm{l}, 0} & \bF_{k-1}\bB_{\mathrm{l}, 0} & \ldots & 
\bF_{1}\cdot\ldots\cdot \bF_{k-1}\bB_{\mathrm{l}, 0}
	\end{bmatrix},\\
	\bZ_{\mathrm{r}, k} &= \begin{bmatrix}
	\bB_{\mathrm{r}, 0} & \bF_{k-1}\bB_{\mathrm{r}, 0} & \ldots & 
\bF_{1}\cdot\ldots \cdot\bF_{k-1}\bB_{\mathrm{r}, 0}
	\end{bmatrix},
\end{align*}
where 
\[
\bB_{\mathrm{l}, 0} := \kappa(p_k)\bS(p_k)\bPil\bB_{\mathrm{l}},\quad
\bB_{\mathrm{r}, 0} := \kappa(p_k)\bS(p_k)\bPil\bB_{\mathrm{r}}\quad \text{ and }\quad \bF_j := \frac{\kappa(p_j)}{\kappa(p_{j-1})}\bS(p_j)\bR(p_{j+1})
\]
for $j=1, \dots, k$ and with $\kappa(p_j) = \sqrt{-2\mathrm{Re}(p_j)}$.


We include the modified projected ADI iteration into the reduced basis method presented in Algorithm \ref{AlgoRBMOff} by solving the error equation \eqref{eq:ErrEq} in Step \ref{StepSol1} and \ref{StepSol2} to obtain the the two factors $\bZ_{\mathrm{l}}(\mu),\; \bZ_{\mathrm{r}}(\mu)$ with $\bZ_{\mathrm{l}}(\mu)\bZ_{\mathrm{r}}(\mu)^{\mathrm{H}}\approx \mathfrak{E}(\mu)$.
The orthonormal basis computation in Step \ref{StepONB1} and \ref{StepONB2} is replaced by $\bV_{\mathrm{glob}} := \operatorname{orth}([\bV_{\mathrm{glob}}, \, \bZ_{\mathrm{l}}(\mu)])$.
Note that here, the columns of $\bZ_{\mathrm{l}}(\mu)$ and $\bZ_{\mathrm{r}}(\mu)$ span the same subspace. As error estimator we use $\Delta_\bV^{(1)}$.
After we have determined the orthonormal basis, we solve equation \eqref{eq:ErrEq} on the corresponding subspace to get an approximate error $\widehat{\mathfrak{E}}(\mu)$, that we use for the error estimator $\Delta_\bV^{(2)}(\mu)$.

\begin{remark}
If the factor $\bZ(\mu)\in\mathbb{R}^{N, n_{\mathbf{Z}}}$ is already of large dimension $n_{\bZ}$, the right-hand side of the error equation \eqref{eq:ErrEq} described by $\bB_{\mathrm{l}}(\mu)$ and $\bB_{\mathrm{r}}(\mu)$ is of even larger rank $2n_{\bZ}+m$, and hence, the solution $\mathfrak{E}(\mu)$ cannot in general be well-represented by low-rank factors. This means that the error space basis might be large which leads to a time consuming error estimation.
\end{remark}

\subsection{Strictly dissipative systems}\label{subsec:strictdissip}
For the model examples from Section \ref{sec:examples} we can make use of their structure to derive the bound $\alpha(\mu)$ in a computationally advantageous way. 
Therefore, we will impose an additional assumption on the matrix pencils 
$sE(\mu)-A(\mu)$ in \eqref{Index2} and \eqref{eq:EAIndex3} that is formed from 
submatrices of $\bE(\mu)$ and $\bA(\mu)$.

\begin{definition}\label{def:StrDef}
	The set $\{sE(\mu) - A(\mu) \in \mathbb{R}[s]^{n,n} \; | \; \mu \in \mathcal{D} \}$ of matrix pencils is called \emph{uniformly strictly dissipative}, if 
	\begin{equation*}
	E(\mu)=E(\mu)^{\mathrm{T}}>0 \quad \text{ and } \quad A^S(\mu) :=\frac{1}{2}\big({A(\mu)+A(\mu)^{\mathrm{T}}}\big)<0 \quad \forall\,\mu \in \mathcal{D}.
	\end{equation*}
\end{definition}
We will consider this condition for our motivating examples from Section~\ref{sec:examples} and determine a corresponding lower bound $\alpha(\mu)$. 

\subsubsection{Stokes-like systems}
\label{ssec:dissip}

We consider the Stokes-like system \eqref{Index2}. Here, the pencil set $\{sE(\mu) - A(\mu) \; | \; \mu \in \mathcal{D} \}$ is naturally uniformly strictly dissipative.

As shown in \cite{morSty06} the projection matrices are given as 
\begin{equation}\label{eq:defbigPi}
\bPil(\mu)  = \bPir(\mu) ^{\mathrm{T}} = \begin{bmatrix}
\Pi(\mu)  & -\Pi(\mu)  A(\mu)  E(\mu) ^{-1} G(\mu)  (G(\mu)^{\mathrm{T}}E(\mu) ^{-1}G(\mu) )^{-1} \\
0 & 0 
\end{bmatrix}
\end{equation}
where 
\begin{equation}\label{eq:defPi}
\Pi(\mu) = I_n - G(\mu)  (G(\mu)^{\mathrm{T}}E(\mu) ^{-1}G(\mu) )^{-1}G(\mu)^{\mathrm{T}}E(\mu) ^{-1}.
\end{equation}

Even though for the Stokes-like system in \eqref{Index2} we have $A^S(\mu) = A(\mu) < 0$ for all $\mu \in \mathcal{D}$, in this section we will already treat the more general case of a uniformly strictly dissipative set of matrix pencils $sE(\mu) - A(\mu)$ with \emph{nonsymmetric} $A(\mu)$ as this case will arise later in Subsubsection~\ref{sssec:sdIndex3}. 

We continue finding a value for $\alpha(\mu)$. 

\begin{theorem}\label{thm:estimalpha}
Let $\mu\in\mathcal{D}$ be given and consider the Stokes-like system \eqref{Index2}. Let $A^{S}(\mu)$, $\bcL(\mu)$, $\bPir^\otimes(\mu)$, $\bPil^\otimes(\mu)$ be defined as in Definition~\ref{def:StrDef}, \eqref{eq:defL}, \eqref{eq:defPil} with $\bPil(\mu) = \bPir(\mu)^{\mathrm{T}}$ as in \eqref{eq:defbigPi}, respectively. Then the bound
\[
\sigma_{\min}^+\left(\bPil^{\otimes}(\mu)\bcL(\mu)\bPir^{\otimes}(\mu)\right) \geq 
2\lambda_{\min}(E(\mu))\lambda_{\min}\big(-A^S(\mu)\big)
\]
is satisfied, where $\lambda_{\min}(\cdot)$ denotes the minimum eigenvalue of its (symmetric) matrix arguments.
\end{theorem}
\begin{proof}

Let $\Pi(\mu)$ be as in \eqref{eq:defPi}. Making use of the condition $\Pi(\mu)G(\mu) = 0$, we can deduce
\begin{align*}
 \bPil^\otimes(\mu) \bcL(\mu) \bPir^\otimes(\mu) &= -\left(\bPil(\mu) \begin{bmatrix} A(\mu) & G(\mu) \\ G(\mu)^{\mathrm{T}} & 0 \end{bmatrix} \bPir(\mu)\right) \otimes \left(\bPil(\mu) \begin{bmatrix} E(\mu) & 0 \\ 0 & 0 \end{bmatrix} \bPir(\mu)\right) \\ 
 &\hspace*{50pt}  -\left(\bPil(\mu) \begin{bmatrix} E(\mu) & 0 \\ 0 & 0 \end{bmatrix} \bPir(\mu)\right) \otimes \left(\bPil(\mu) \begin{bmatrix} A(\mu) & G(\mu) \\ G(\mu)^{\mathrm{T}} & 0 \end{bmatrix} \bPir(\mu)\right) \\
 &= -\begin{bmatrix} \Pi(\mu) A(\mu) \Pi(\mu)^{\mathrm{T}} & 0 \\ 0 & 0 \end{bmatrix} \otimes \begin{bmatrix} \Pi(\mu) E(\mu) \Pi(\mu)^{\mathrm{T}} & 0 \\ 0 & 0 \end{bmatrix} \\ 
 &\hspace*{50pt} - \begin{bmatrix} \Pi(\mu) E(\mu) \Pi(\mu)^{\mathrm{T}} & 0 \\ 0 & 0 \end{bmatrix} \otimes \begin{bmatrix} \Pi(\mu) A(\mu) \Pi(\mu)^{\mathrm{T}} & 0 \\ 0 & 0 \end{bmatrix} 
\end{align*}

Define $\Pi^\otimes(\mu) := \Pi(\mu) \otimes \Pi(\mu)$ and the matrix 
\[
L(\mu):=-A(\mu)\otimes E(\mu)-E(\mu)\otimes A(\mu).
\]
Denoting the smallest positive eigenvalue of a symmetric matrix by $\lambda_{\min}^+(\cdot)$, Corollary~3.1.5 from \cite{HorJ91} and the eigenvalue properties of the Kronecker product, given in \cite{HorJ91, LanT85} (see also \cite{morSonS17}) as well as a Rayleigh quotient argument lead to the estimate
\begin{align*}
\sigma_{\min}^+\left(\bPil^{\otimes}(\mu)\bcL(\mu)\bPir^{\otimes}(\mu)\right)
&=\sigma_{\min}^+\left(\Pi^{\otimes}(\mu)L(\mu)\Pi^{\otimes}(\mu)^{\mathrm{T}}\right) \\
&\geq \lambda_{\min}^+\left(\frac{1}{2}\Pi^{\otimes}(\mu)\left(L(\mu)+L(\mu)^{\mathrm{T}}\right)\Pi^{\otimes}(\mu)^\mathrm{T}\right)\\
&= \min_{v \in \im\Pi^{\otimes}(\mu)^{\mathrm{T}},\,{\|v\|}_2=1}\frac{1}{2}v^{\mathrm{T}}\Pi^{\otimes}(\mu)\left(L(\mu)+L(\mu)^{\mathrm{T}}\right)\Pi^{\otimes}(\mu)^\mathrm{T}v \\
&= \min_{v \in \im\Pi^{\otimes}(\mu)^\mathrm{T},\,{\|v\|}_2=1}\frac{1}{2}v^{\mathrm{T}}\left(L(\mu)+L(\mu)^{\mathrm{T}}\right)v \\
&\ge \min_{v \in \mathbb{R}^{n^2},\,{\|v\|}_2=1}\frac{1}{2}v^{\mathrm{T}}\left(L(\mu)+L(\mu)^{\mathrm{T}}\right)v \\
&= \lambda_{\min}\left(\frac{1}{2}\left(L(\mu)+L(\mu)^{\mathrm{T}}\right)\right)
 = 2\lambda_{\min}(E(\mu))\lambda_{\min}\big(-A^S(\mu)\big).
\end{align*}
\end{proof}
Since computing the eigenvalues of $E(\mu)$ and $A(\mu)$ for all requested parameters $\mu$ can be expensive, there are several ways to find cheap lower bounds of $2\lambda_{\min}(E(\mu))\lambda_{\min}\big(-A^S(\mu)\big)$, some of which are derived in \cite{morSonS17}. Based on these bounds, we can, e.\,g., estimate
\begin{align}
2\lambda_{\min}(E(\mu))\lambda_{\min}\big(-A^S(\mu)\big) 
&\geq 
2\min_{k=0,\ldots,n_{E}}\frac{\Theta_{k}^{E}(\mu)}{\Theta_{k}^{E}(\bar{\mu})}
\lambda_{\min}(E(\bar{\mu}))\min_{k=0,\ldots,n_{A}}\frac{\Theta_{k}^{A}(\mu)}{
\Theta_{k}^{A}(\bar{\mu})}\lambda_{\min}\big(-A^S(\bar{\mu})\big) \label{eq:mintheta}\\ 
&=:\alpha(\mu), \notag
\end{align}
where $\bar{\mu}$ is an arbitrary fixed parameter in $\mathcal{D}$ and where we 
make use of the convention that $\Theta_{0}^{E} = \Theta_{0}^{A} \equiv 1$.
Here we compute the eigenvalues of $E(\bar{\mu})$ and $A^S(\bar{\mu})$ only once to find a lower bound $\alpha(\mu)$ of $\sigma_{\min}^+\left(\bPil^{\otimes}(\mu)\bcL(\mu)\bPir^{\otimes}(\mu)\right)$ for every parameter $\mu\in\mathcal{D}$.

\subsubsection{Mechanical systems}\label{sssec:sdIndex3}
Next we turn to the mechanical system from Subsection~\ref{sec:2.2}. There, the corresponding matrix pencil set 
\begin{equation*}
\left\{ s\begin{bmatrix}
I_{n_x} & 0 \\ 0 & M(\mu) 
\end{bmatrix}- \begin{bmatrix}
0 & I_{n_x}\\ -K(\mu) & -D(\mu)
\end{bmatrix} \in\mathbb{R}[s]^{2n_x , 2n_x}\; \bigg| \; \mu \in \mathcal{D}
\right\}
\end{equation*}
is \emph{not} uniformly strictly dissipative. However, it can be made uniformly strictly dissipative by appropriate generalized state-space transformations as we will show now.  

Here we apply a transformation presented in \cite{Eid2011, PanWL12}.
We observe that by adding productive zeros and for an auxiliary function $\gamma:\mathcal{D} \to (0,\infty)$ yet to be chosen, the system \eqref{Index3} is equivalent to the system 
\begin{align*}
\frac{\mathrm{d}}{\mathrm{d}t}K(\mu)x(t) + \gamma(\mu)\frac{\mathrm{d}^2}{\mathrm{d}t^2}M(\mu)x(t) 
&= -\gamma(\mu)K(\mu)x(t)+ \frac{\mathrm{d}}{\mathrm{d}t}K(\mu)x(t)\\
&\hspace{0pt} - \gamma(\mu)\frac{\mathrm{d}}{\mathrm{d}t}D(\mu)x(t) + \gamma(\mu)G(\mu)\lambda(t) + \gamma(\mu)B_xu(t), \\
\gamma(\mu)\frac{\mathrm{d}}{\mathrm{d}t}M(\mu)x(t) + \frac{\mathrm{d}^2}{\mathrm{d}t^2}M(\mu)x(t) 
&= -K(\mu)x(t) + \gamma(\mu)\frac{\mathrm{d}}{\mathrm{d}t}M(\mu)x(t)\\
& \hspace{50pt} - \frac{\mathrm{d}}{\mathrm{d}t}D(\mu)x(t)  + G(\mu)\lambda(t) + B_x(\mu)u(t),\\
0 &= \gamma(\mu)G(\mu)^{\mathrm{T}}x(t),\\
0 &= \frac{\mathrm{d}}{\mathrm{d}t}G(\mu)^{\mathrm{T}}x(t),
\end{align*}
where $x(t)$ is equal to $x_1(t)$ and $\frac{\mathrm{d}}{\mathrm{d}t}x(t)$ equal to $x_2(t)$ from system \eqref{Index3}.
The last equation is a direct consequence from the equation above and poses no extra restrictions.
These equations can be written as first-order system. By defining the matrices
\begin{align}\label{eq:sdMatrices}
E(\mu)&:=\begin{bmatrix}
K(\mu) & \gamma(\mu)M(\mu)\\
\gamma(\mu)M(\mu) & M(\mu)
\end{bmatrix},\quad A(\mu) := \begin{bmatrix}
-\gamma(\mu)K(\mu) & K(\mu)-\gamma(\mu)D(\mu)\\
-K(\mu) & -D(\mu)+\gamma(\mu)M(\mu)
\end{bmatrix},\\ B(\mu)&:=\begin{bmatrix}
\gamma(\mu)B_{x}(\mu)\\
B_{x}(\mu)
\end{bmatrix},\nonumber
\end{align}
and replacing $G(\mu)$ by $\begin{bsmallmatrix}
\gamma(\mu)G(\mu) & 0 \\ 0 & G(\mu)\end{bsmallmatrix}$, we generate a system in the form \eqref{Index2} such that we can apply the methods from Subsection~\ref{ssec:dissip} for this kind of system in the following, if $\gamma(\mu)$ is chosen properly.
This is possible, because even if $A(\mu)$ is unsymmetric, the projector $\Pi(\mu)$ in \eqref{eq:defPi} will be the same in both left and right spectral projectors $\bPil(\mu)$ and $\bPir(\mu)$ and hence, the estimate in Theorem~\ref{thm:estimalpha} is still valid. Only an estimation of the form \eqref{eq:mintheta} is not possible, since the matrices $E(\mu)$ and $A^S(\mu)$ cannot be written as affine decompositions in which all summands are positive or negative semidefinite, respectively. So in our numerical experiments we make use of Theorem~\ref{thm:estimalpha} directly.

\begin{theorem}
Consider the mechanical system \eqref{Index3}. Then the matrix pencil set $\{sE(\mu)-A(\mu)\; | \; \mu \in \mathcal{D}\}$ with $E(\mu)$ and $A(\mu)$ defined in \eqref{eq:sdMatrices} is uniformly strictly dissipative, if $0 < \gamma(\mu) < \gamma^*(\mu)$ with
\[	\gamma^*(\mu) = \frac{\lambda_{\min}(D(\mu))}{\lambda_{\max}(M(\mu))+\frac{1}{4}\lambda_{\max}(D(\mu))^{2}\lambda_{\max}(K(\mu)^{-1})}
\]
is satisfied for all $\mu \in \mathcal{D}$.
\end{theorem}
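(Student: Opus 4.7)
The plan is to verify the two defining conditions of uniform strict dissipativity from Definition~\ref{def:StrDef} applied to the block matrices $E(\mu)$ and $A(\mu)$ defined in~\eqref{eq:sdMatrices}. Symmetry of $E(\mu)$ is immediate from the symmetry of $K(\mu)$ and $M(\mu)$ and the off-diagonal pattern, so the task reduces to proving $E(\mu)>0$ and $A^S(\mu)<0$ for every $\mu\in\mathcal{D}$. In both cases the plan is to apply a Schur complement argument and then to dominate the resulting quadratic expression by the extreme eigenvalues of $K(\mu)$, $D(\mu)$, and $M(\mu)$; the stated bound on $\gamma(\mu)$ is the sufficient condition that emerges.

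For the dissipativity of $A(\mu)$, I would first compute
\[
A^S(\mu) = \tfrac{1}{2}\bigl(A(\mu)+A(\mu)^{\mathrm{T}}\bigr) = \begin{bmatrix} -\gamma(\mu) K(\mu) & -\tfrac{\gamma(\mu)}{2} D(\mu) \\ -\tfrac{\gamma(\mu)}{2} D(\mu) & -D(\mu)+\gamma(\mu) M(\mu) \end{bmatrix},
\]
where the off-diagonal blocks $K(\mu)-\gamma(\mu) D(\mu)$ and $-K(\mu)$ average to $-\tfrac{\gamma(\mu)}{2}D(\mu)$ after using the symmetry of $K(\mu)$ and $D(\mu)$. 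Since $\gamma(\mu)K(\mu)>0$, the matrix $-A^S(\mu)$ is positive definite if and only if the Schur complement
\[
D(\mu)-\gamma(\mu)M(\mu)-\tfrac{\gamma(\mu)}{4}D(\mu)K(\mu)^{-1}D(\mu)
\]
is positive definite. Using $M(\mu)\le\lambda_{\max}(M(\mu))I$ and the standard estimate $\lambda_{\max}(D(\mu)K(\mu)^{-1}D(\mu))\le\lambda_{\max}(D(\mu))^{2}\lambda_{\max}(K(\mu)^{-1})$, a sufficient condition is
\[
\lambda_{\min}(D(\mu)) \;\ge\; \gamma(\mu)\Bigl(\lambda_{\max}(M(\mu))+\tfrac{1}{4}\lambda_{\max}(D(\mu))^{2}\lambda_{\max}(K(\mu)^{-1})\Bigr),
\]
which is exactly the bound in the theorem.

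For $E(\mu)>0$, since $M(\mu)>0$, the Schur complement with respect to the $(2,2)$-block gives the equivalent condition $K(\mu)-\gamma(\mu)^{2}M(\mu)>0$, i.e.\ $\gamma(\mu)^{2}\lambda_{\max}(M(\mu))<\lambda_{\min}(K(\mu))$. I would verify that the bound on $\gamma(\mu)$ already implies this by the following AM-GM step: setting $\beta=\lambda_{\max}(M(\mu))$ and $a=\tfrac{1}{4}\lambda_{\max}(D(\mu))^{2}\lambda_{\max}(K(\mu)^{-1})$, one has $(\beta+a)^{2}\ge 4\beta a=\beta\lambda_{\max}(D(\mu))^{2}/\lambda_{\min}(K(\mu))$, and hence
\[
\gamma(\mu)^{2}\le\frac{\lambda_{\min}(D(\mu))^{2}}{(\beta+a)^{2}}\le\frac{\lambda_{\min}(D(\mu))^{2}\lambda_{\min}(K(\mu))}{\lambda_{\max}(M(\mu))\lambda_{\max}(D(\mu))^{2}}\le\frac{\lambda_{\min}(K(\mu))}{\lambda_{\max}(M(\mu))},
\]
since $\lambda_{\min}(D(\mu))\le\lambda_{\max}(D(\mu))$. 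This yields $E(\mu)>0$.

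The main subtlety I anticipate is the passage from the non-strict inequality in the hypothesis on $\gamma(\mu)$ to the strict definiteness required by Definition~\ref{def:StrDef}: the chain of bounds above collapses to an equality only in the degenerate scenario where both $D(\mu)$ is a scalar multiple of the identity and $\lambda_{\max}(M(\mu))$ exactly equals $\tfrac{1}{4}\lambda_{\max}(D(\mu))^{2}\lambda_{\max}(K(\mu)^{-1})$, and the same remark applies to the $A^S(\mu)$ computation. Away from this exceptional case the implications are strict; in the borderline case one interprets the bound as strict, which is the natural reading consistent with the definition. With these ingredients assembled, the claim follows from Definition~\ref{def:StrDef} applied pointwise in $\mu\in\mathcal{D}$.
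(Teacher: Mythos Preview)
Your proof is correct and follows essentially the same route as the paper: both reduce the question to the Schur complement condition $D(\mu)-\gamma(\mu)\bigl(M(\mu)+\tfrac{1}{4}D(\mu)K(\mu)^{-1}D(\mu)\bigr)>0$ and then bound the maximal eigenvalue of the bracketed term via Weyl's inequality and the submultiplicativity estimate $\lambda_{\max}(DK^{-1}D)\le\lambda_{\max}(D)^{2}\lambda_{\max}(K^{-1})$. The only real difference is that the paper imports this Schur complement condition (together with the positivity of $E(\mu)$) as a black box from \cite{Eid2011,PanWL12}, whereas you compute $A^{S}(\mu)$ and both Schur complements explicitly and supply the AM--GM step showing that the hypothesis on $\gamma(\mu)$ already forces $K(\mu)-\gamma(\mu)^{2}M(\mu)>0$; in that sense your argument is more self-contained. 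Your remark on the strict-versus-non-strict inequality is also apt and applies equally to the paper's formulation.
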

\begin{proof}
As shown in \cite{PanWL12}, the pencil set $\{sE(\mu)-A(\mu)\; | \; \mu \in \mathcal{D}\}$ is uniformly strictly dissipative, if we choose $\gamma(\mu) > 0$ such that
\begin{equation*}
 \gamma(\mu) < \lambda_{\min}\left( D(\mu) \left( M(\mu) + \frac{1}{4} D(\mu)K(\mu)^{-1}D(\mu) \right)^{-1} \right) \quad \forall\,\mu \in \mathcal{D}.
\end{equation*}
Using the estimate $\lambda_{\min}(XY) \ge \lambda_{\min}(X)\lambda_{\min}(Y)$ for symmetric and positive definite matrices $X$ and $Y$ of conforming dimension \cite[Theorem~4]{WanZ92}, we arrive at the choice for $\gamma(\mu)$ given by
\begin{equation}\label{gamma}
	\gamma(\mu)\lambda_{\max}\left(M(\mu)+\frac{1}{4}\; D(\mu)K(\mu)^{-1}D(\mu)\right) < \lambda_{\min}(D(\mu))\quad \forall\,\mu \in \mathcal{D}.
\end{equation}
Because of the symmetry and positive definiteness of $D(\mu)K(\mu)^{-1}D(\mu)$ and the submultiplicativity of the norm we obtain
\begin{align*}
\lambda_{\max}\left(D(\mu)K(\mu)^{-1}D(\mu)\right)
&= \left\| D(\mu) K(\mu)^{-1} D(\mu) \right\|_2 \\ &\le {\|D(\mu)\|}_2^2 \cdot \big\| K(\mu)^{-1} \big\|_2 = 
 \lambda_{\max}(D(\mu))^2 \cdot \lambda_{\max}(K(\mu)^{-1}).
\end{align*}

By Weyl's lemma \cite{HorJ91,LanT85}, we also have
\[\lambda_{\max}\left(M(\mu)+\frac{1}{4}D(\mu)K(\mu)^{-1}D(\mu)\right)\leq \lambda_{\max}(M(\mu)) + \frac{1}{4}\lambda_{\max}\big(D(\mu)K(\mu)^{-1}D(\mu)\big).\]

Therefore, condition \eqref{gamma} is satisfied, if we choose $\gamma(\mu)$ such that
\begin{equation*}
	\gamma(\mu)\left(\lambda_{\max}(M(\mu))+\frac{1}{4}\; \lambda_{\max}(D(\mu))^{2}\lambda_{\max}\big(K(\mu)^{-1}\big)\right) < \lambda_{\min}(D(\mu)) \quad \forall\,\mu \in \mathcal{D}.
\end{equation*}
This can be achieved by the choice
\begin{equation}\label{eq:fin_gamma}
	\gamma(\mu)<\frac{\lambda_{\min}(D(\mu))}{\lambda_{\max}(M(\mu))+\frac{1}{4}\lambda_{\max}(D(\mu))^{2}\lambda_{\max}(K(\mu)^{-1})}.
\end{equation}
\end{proof}

To avoid computing the eigenvalues of $M(\mu),\, D(\mu),\, K(\mu)$ for every parameter $\mu$, we further use the estimates
\begin{align*}
	\lambda_{\max}(D(\mu))&\leq 
\lambda_{\max}(D_0) + \sum_{k=1}^{n_{D}}\Theta_{k}^{D}(\mu)\lambda_{\max}(D_{k}
) = \sum_{k=0}^{n_{D}}\Theta_{k}^{D}(\mu)\lambda_{\max}(D_{k}
), \\
\lambda_{\min}(D(\mu))&\geq 
\lambda_{\min}(D_0) + 
\sum_{k=1}^{n_{D}}\Theta_{k}^{D}(\mu)\lambda_{\min}(D_{k}) = 
\sum_{k=0}^{n_{D}}\Theta_{k}^{D}(\mu)\lambda_{\min}(D_{k}),
\end{align*}
that are a consequence of Weyl's lemma. Similar estimates are also valid for 
$M(\mu)$ and $K(\mu)$.
Then we can further estimate $\gamma(\mu)$ as
\begin{equation}\label{eq:gamma_star}
\gamma(\mu)<\gamma^*(\mu)=\frac{\sum_{k=0}^{n_{D}}\Theta_{k}^{D}(\mu)\lambda_{\min}(D_{k})}
{ 
\sum_{k=0}^{n_{M}}\Theta_{k}^{M}(\mu)\lambda_{\max}(M_{k})+\frac{1}{4}
\left(\sum_ { k=0 
}^{n_{D}}\Theta_{k}^{D}(\mu)\lambda_{\max}(D_{k})\right)^{2}/\left(\sum_{k=0}^{
n_ { K } } \Theta_{k}^{K}(\mu)\lambda_{\min}(K_{k})\right)},
\end{equation}
while ensuring that we can choose $\gamma(\mu) > 0$ for all $\mu \in \mathcal{D}$.

The benefit of the above estimate is that the extremal eigenvalues of the matrices $M_k$, $D_k$, and $K_k$ have to be computed only once in the beginning and otherwise, its evaluation is cheap, since $\Theta_k^M(\cdot)$, $\Theta_k^D(\cdot)$, $\Theta_k^K(\cdot)$ are scalar-valued functions.

\subsection{Treatment of the algebraic parts}\label{ssec:treatmentalg}
The goal of this subsection is to discuss the reduction of the algebraic part 
of the system which is potentially also parameter-dependent. To do so, we first 
precompute specific matrices in the offline phase which will later be used to 
efficiently construct the Markov parameters in the online phase for a 
particular parameter configuration and then determine a reduced representation 
of the algebraic subsystem using the construction from the last paragraph in 
Subsection~\ref{ssec:D0D1determ}. In order for this construction to be 
efficient, we assume that the parameter influence has an additional 
\emph{low-rank} structure, i.\,e., with regard to the affine decomposition 
\eqref{eq:affinedeco}, we assume that $\bE_k$, $\bA_k$, $\bB_k$, and $\bC_k$ 
are low-rank matrices for $k \ge 1$, i.\,e., they admit low-rank factorizations
\begin{align}\label{eq:lowrank}
\begin{split}
 \bE_k &= \bU_{\bE,k} \bV_{\bE,k}^{\mathrm{T}}, \quad k=1,\,\ldots,\,n_{\bE}, \\
 \bA_k &= \bU_{\bA,k} \bV_{\bA,k}^{\mathrm{T}}, \quad k=1,\,\ldots,\,n_{\bA}.
 \end{split}
\end{align}
To determine the Markov parameters, for a fixed set of sample points $\omega_i 
\in \mathbb{R}$, $i=1,\,\ldots,\,n_{\mathrm{S}}$, we aim to evaluate 
$\boldsymbol{\mathcal{G}}(\mu,\ri\omega_i)$ efficiently for all needed parameters $\mu\in\mathcal{D}$. In other words, we aim for an efficient procedure to evaluate the 
transfer function $\boldsymbol{\mathcal{G}}(\mu,\ri\omega)$ for fixed value $\omega \in 
\mathbb{R}$ and varying $\mu$.
Due to \eqref{eq:lowrank} and since $n_{\bE}$ and $n_{\bA}$ are assumed to be 
very small, also the matrix 
$\ri\omega \sum_{k=1}^{n_{\bE}}\Theta_{k}^{\bE}(\mu)\bE_{k}-  
\sum_{k=1}^{n_{\bA}}\Theta_{k}^{\bA}(\mu)\bA_{k}$ is of  low rank and we can 
factorize it as
\begin{multline*}
 \ri\omega \sum_{k=1}^{n_{\bE}}\Theta_{k}^{\bE}(\mu)\bE_{k}-  
\sum_{k=1}^{n_{\bA}}\Theta_{k}^{\bA}(\mu)\bA_{k} \\ = \begin{bmatrix} 
\bU_{\bE,1}^\mathrm{T} \\ \vdots \\ \bU_{\bE,n_{\bE}}^\mathrm{T} 
\\ \bU_{\bA,1}^\mathrm{T} \\ \vdots \\ \bU_{\bA,n_{\bA}}^\mathrm{T} 
\end{bmatrix}^\mathrm{T} \begin{bmatrix} \ri\omega 
\Theta_{1}^{\bE}(\mu) &&&&& \\ & \ddots &&&& \\
&&\ri\omega\Theta_{n_{\bE}}^{\bE}(\mu)&&& \\ &&&
-\Theta_{1}^{\bA}(\mu) && \\ &&&&\ddots& \\ 
&&&&&-\Theta_{n_{\bA}}^{\bA}(\mu) \end{bmatrix} 
\begin{bmatrix}\bV_{\bE,1}^\mathrm{T} \\ \vdots \\ \bV_{\bE,n_{\bE}}^\mathrm{T} 
\\ \bV_{\bA,1}^\mathrm{T} \\ \vdots \\ \bV_{\bA,n_{\bA}}^\mathrm{T} 
\end{bmatrix} =: \bU \boldsymbol{\Omega}(\mu,\ri\omega) \bV^\mathrm{T}.
\end{multline*}
It holds that
\begin{align*}
 \boldsymbol{\mathcal{G}}(\mu,\ri\omega) = 
\sum_{i=0}^{n_\bC} \sum_{j=0}^{n_\bB} 
\Theta_{i}^{\bC}(\mu)\Theta_{j}^{\bB}(\mu) \bC_i\left( \ri\omega\bE_0 - 
\bA_0 + \bU \boldsymbol{\Omega}(\mu,\ri\omega) \bV^\mathrm{T}\right)^{-1} 
\bB_j.
\end{align*}
With the help of the Sherman-Morrison-Woodbury identity we obtain 
\begin{multline*}
 \bC_i\left( \ri\omega\bE_0 - \bA_0 
+ \bU \boldsymbol{\Omega}(\mu,\ri\omega) \bV^\mathrm{T}\right)^{-1} \bB_j = 
\bC_i \left(\ri\omega\bE_0 - \bA_0 \right)^{-1} \bB_j \\ - \bC_i \left(\ri\omega\bE_0 - 
\bA_0 \right)^{-1} \bU \left(
\boldsymbol{\Omega}(\mu,\ri\omega)^{-1}+\bV^{\mathrm{T}} \left(\ri\omega\bE_0 - \bA_0 
\right)^{-1} \bU \right)^{-1} \bV^{\mathrm{T}}\left(\ri\omega\bE_0 - \bA_0 
\right)^{-1} \bB_j. 
\end{multline*}
The matrices $\bC_i\left(\ri\omega\bE_0 - \bA_0 \right)^{-1} \bB_j$, 
$\bC_i \left( \ri\omega\bE_0 - \bA_0 \right)^{-1} \bU$, $\bV^{\mathrm{T}} 
\left(\ri\omega\bE_0 - \bA_0 \right)^{-1} \bU$, and \linebreak
$\bV^{\mathrm{T}}\left( \ri\omega\bE_0 - \bA_0 \right)^{-1} \bB_j$ are 
all of small dimension and they can be precomputed in the offline phase. In the 
online phase, the small matrix 
$\boldsymbol{\Omega}(\mu,\ri\omega)^{-1}+\bV^{\mathrm{T}} \left(\ri\omega\bE_0 - \bA_0 
\right)^{-1} \bU$ can be formed (under the assumption that all necessary 
matrices are invertible) and used for small and dense system solves in order to 
calculate $\boldsymbol{\mathcal{G}}(\mu,\ri\omega)$ with only little effort.

\section{Numerical results}\label{ChapterResults}


In this section, we present the numerical results for the two differential-algebraic systems from Section~\ref{sec:examples}.
The computations have been done on a computer with 2 Intel Xeon Silver 4110 CPUs running at 2.1 GHz and equipped with 192 GB total main memory. 
The experiments use \matlab R2021a (9.3.0.713579) and examples and methods from M-M.E.S.S.-2.2. \cite{SaaKB22-mmess-2.2}.

\subsection{Problem 1: Stokes equation}
We consider a system that describes the creeping flow in capillaries or porous media.
It has the following structure
\begin{align}\label{Stokes}
\begin{split}
\frac{\mathrm{d}}{\mathrm{d}t}v(\zeta,t) &= \mu\Delta v(\zeta,t) - \nabla p(\zeta,t) +f(\zeta,t),\\
0 &= \mathrm{div}(v(\zeta,t)),
\end{split}
\end{align}
with appropriate initial and boundary conditions. The position in the domain $\Omega\subset\mathbb{R}^{\ell}$ is described by $\zeta\in\Omega$ and $t\geq0$ is the time.
For simplicity, we use a classical solution concept and assume that the external force $f: \Omega \times [0,\infty) \to \mathbb{R}^\ell$ is continuous and that 
the velocities $v: \Omega \times [0,\infty) \to \mathbb{R}^\ell$ and pressures $p: \Omega \times [0,\infty) \to \mathbb{R}^\ell$ satisfy the necessary smoothness conditions.
The parameter $\mu\in\mathcal{D}$ represents the dynamic viscosity.
We discretize system \eqref{Stokes} by finite differences as shown in \cite{morMehS05, morSty06} and add an output equation.
Then, we obtain a discretized system of the form \eqref{Index2}, which is of index 2.
The matrix $A(\mu) = \mu \cdot A\in \mathbb{R}^{n, n}$ depends on the viscosity parameter $\mu$.

\begin{figure}[bt]
	\centering
	\includegraphics[scale=0.35]{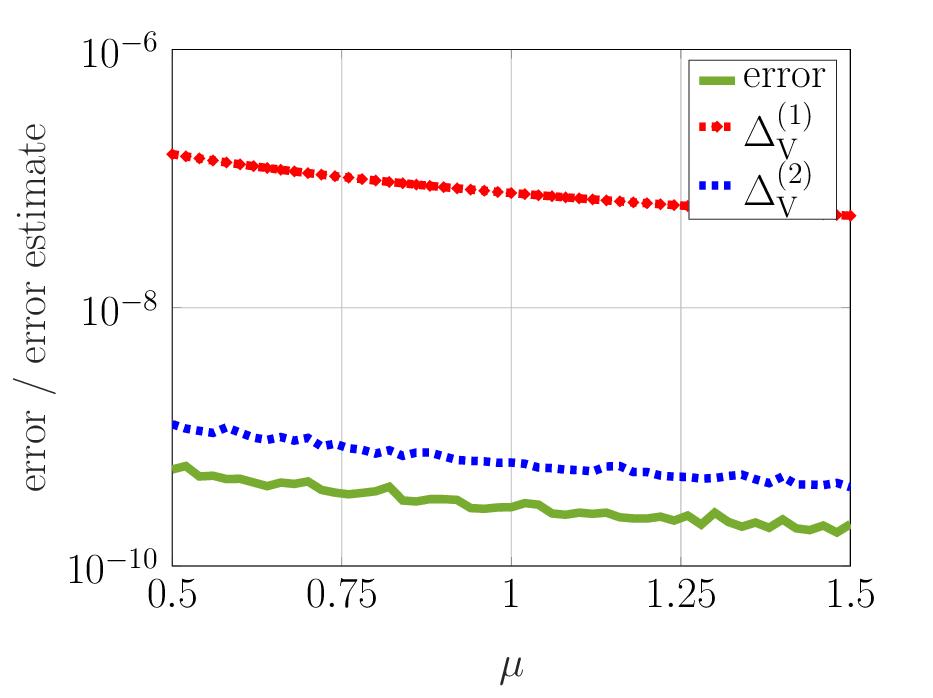}\label{fig:RBM2_2}
	\caption{Error and error estimates of the approximated controllability Gramians for the Stokes system~\eqref{Index2} after the first iteration of the reduced basis method.}
	\label{BBM2pic}
\end{figure}

\begin{figure}[bt]
	\centering
	\subfloat[Sigma plots of the original, reduced, and error transfer functions 
	for the test parameter $\mu=1.28$.]{%
		\includegraphics[scale=0.35]{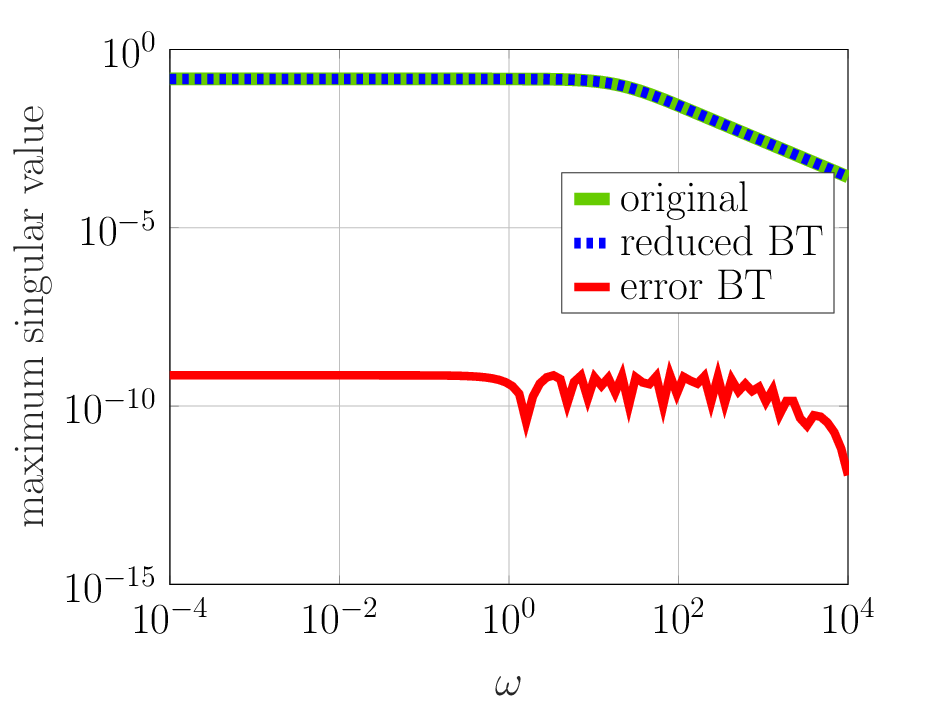}\label{BT2_50pic}}
	\hspace{20pt}
	\subfloat[Sigma plots of the original, reduced, and error transfer functions for the parameter $\mu=0.65$.]{%
		\includegraphics[scale=0.35]{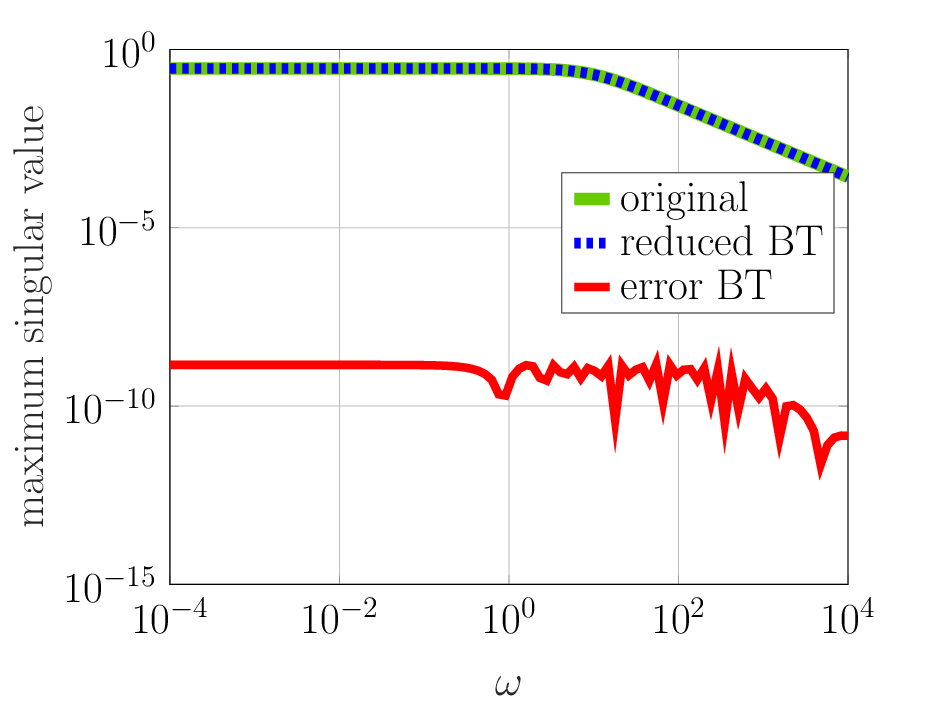}\label{BT2pic}}
	\caption{Results for the reduction of the Stokes system \eqref{Index2}.}
\end{figure}

Our constant example matrices $E=I_n,\, A,\, G,\, B_1,\, C_1$ are created by the M-M.E.S.S. function $\mathtt{stokes\_FVM}$ and we
choose the dimensions $n=3280,\, q=1681$, $m,\, p =1$, and the parameter set $\mathcal{D}=\left[\frac{1}{2},\,\frac{3}{2}\right]$.
The test parameter set $\mathcal{D}_{\mathrm{Test}}$ consists of ten equidistant points within $\mathcal{D}$, i.\,e., 
 $\mathcal{D}_{\mathrm{Test}} = \left\{  \frac{1}{2} + \frac{1}{9}\cdot k \; \big| \; k = 0,\,\ldots,\,9\right\}$.
The matrices $B_2$ and $C_2$ are zero matrices.
The reduced basis method from Section~\ref{reduced basis method} produces bases of dimensions $24$ for both projected continuous-time Lyapunov equations.
This basis generation is done in the offline phase, which takes $2.7\cdot 10^3$ seconds for this example.
For the first iteration of the reduced basis method corresponding to the proper controllability Lyapunov equation, we evaluate the errors and their estimates from equation \eqref{eq:err1_part1} and \eqref{ErrEstF}.
They are presented in Figure~\ref{BBM2pic}, where the error is evaluated by 
\begin{equation*}
\textrm{error} \approx {\big\| \bP_{\mathrm{p}}^{\mathrm{acc}}(\mu) - \bZ(\mu)\bZ(\mu)^{\mathrm{H}} \big\|}_{\mathrm{F}},
\end{equation*}
where $\bP_{\mathrm{p}}^{\mathrm{acc}}(\mu)$ denotes an accurate approximation of the exact solution $\bP_{\mathrm{p}}(\mu)$. For evaluating the error, the Lyapunov equations are also solved with the residual tolerance $10^{-15}$ to obtain an accurate estimate of the exact solution.
After the first iteration step, we obtain an error that is already smaller than the tolerance $\mathrm{tol}=10^{-4}$.
Additionally, we see that the error estimator $\Delta_{\bV}^{(2)}$ provides an almost sharp bound of the actual error while $\Delta_\bV^{(1)}$ leads to more conservative error bounds.

After we have determined the bases, we approximate the proper controllability and observability Gramians to apply balanced truncation in the online phase for two different parameters $\mu=1.28$ and $\mu=0.65$ where the first parameter is the eighth entry from the test parameter set $\mathcal{D}_{\mathrm{Test}}$.
The reduced systems for both parameters are of dimension $r=10$ and consist only of differential states. 
This online phases last for $3.7\cdot 10^{-2}$ and $1.9\cdot 10^{-2}$ seconds, respectively, for this example. 
Figures~\ref{BT2_50pic} and~\ref{BT2pic} depict the transfer functions of the full and the reduced systems and the corresponding errors. We observe that the error is smaller than $ 10^{-8}$ for all $\omega$ in $[10^{-4},\, 10^4]$.
Additionally, in Figures~\ref{out2_50pic} and~\ref{out2pic}, we show the output 
$y(t)$ of the full system and the output  $y_{\mathrm{R}}(t)$ of the reduced system for an input $u(t)=-2 - \sin(t)$, $t\in[0,\, 10]$, and the initial condition is given as $\bz(0)=0$.
We also depict the error ${\|y(t) - y_{\mathrm{R}}(t)\|}_2$ which is 
smaller than $10^{-8}$ in the entire time range.
Hence, we can observe that the system behavior is well-approximated by the reduced system.

\begin{figure}[bt]
	\centering
	\subfloat[Output plots of the original and reduced system and the 
	corresponding error for the test parameter $\mu=1.28$.]{%
		\includegraphics[scale=0.35]{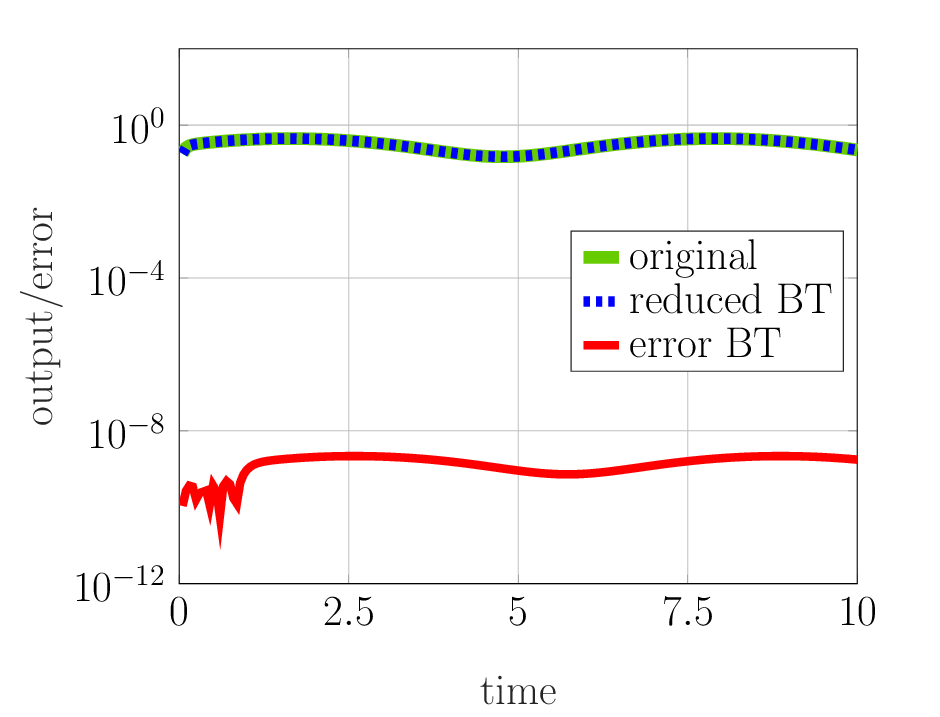}\label{out2_50pic}}
	\hspace{20pt}
	\subfloat[Output plots of the original and reduced system and the corresponding error for the parameter $\mu=0.65$.]{%
		\includegraphics[scale=0.35]{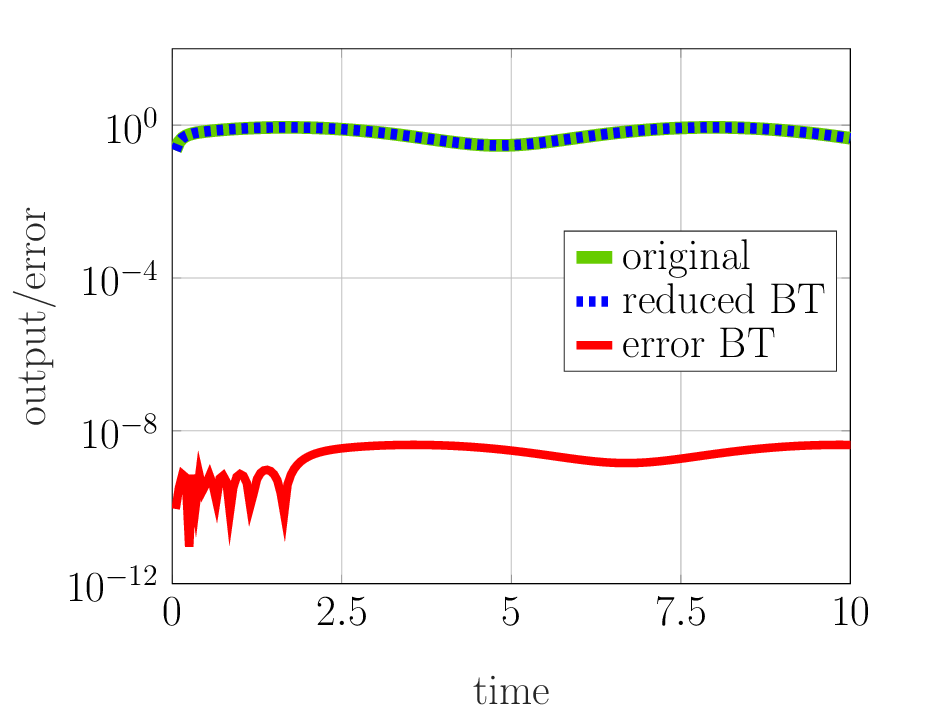}\label{out2pic}}
	\caption{Output and output error of the original and reduced Stokes system \eqref{Index2}}
\end{figure}

Now, to demonstrate that the methods presented in this paper are also 
suitable for systems with an improper transfer function, we modify the system in such a way that the matrices $B_2$ and $C_2$ are chosen to be $
B_2 = \begin{bmatrix}
0 &  \dots & 0 & 1
\end{bmatrix}^{\mathrm{T}}$ and  $C_2 = \begin{bmatrix}
1 &  0 & \dots & 0 & 1
\end{bmatrix}.
$ 
That way, we obtain more complex restrictions for $G^{\mathrm{T}}x(t)$ and also evaluate the pressure described by $\lambda(t)$ to generate an output $y(t)$.
We additionally assume that the external force represented by the input function $B_1(\mu) u(t)$ varies its magnitude depending on the viscosity of the system so that the input matrix $B_1(\mu) = (1+\mu)\cdot B_1$ is parameter-dependent, and hence, the polynomial part of the transfer function is parameter-dependent as well.

Again, in the offline phase, we execute the reduced basis method to generate bases that approximate the proper controllability and observability spaces (i.\,e., the image space of the corresponding proper Gramians) of this system.
This phase takes $9.3\cdot 10^{3}$ seconds for this example.
In Figure~\ref{fig:RBM2im_1} and~\ref{fig:RBM2im_2}, the errors and error estimations in the two stages of the reduced basis method corresponding to the proper controllability space are shown. For evaluating the error, the Lyapunov equations are also solved with the residual tolerance $10^{-15}$ to obtain an accurate estimate of the exact solution.
We can observe that after the first iteration step, we obtain an error that is significantly larger than the tolerance $10^{-4}$ as shown in Figure~\ref{fig:RBM2im_1}.
Additionally, we see that the error estimator $\Delta_{\bV}^{(2)}$ provides an almost sharp bound of the actual error while $\Delta_\bV^{(1)}$ leads to more conservative error bounds.
The second iteration of the reduced basis method and the corresponding errors are depicted in Figure~\ref{fig:RBM2im_2}.
In this case, the error as well as the estimators are smaller than the tolerance $\mathrm{tol}=10^{-4}$. 
The final basis projection space dimensions for the projected continuous-time Lyapunov equations are $48$ and $54$.

The next step is to use the bases to generate approximations of the system Gramians and to apply balanced truncation, which leads to a reduced system of dimension $25$ where we obtain $23$ differential states and $2$ algebraic ones for both parameters $\mu=1.28$ and $\mu=0.65$.
The online phase lasts for $3.2$ and $3.1$ seconds if we use balanced truncation via the improper Gramians to reduce the improper system components and $2.4$ seconds for both parameters if we extract the polynomial part of the transfer function. However, in this example, the parameter dependency does not have a low-rank structure. Thus, for the determination of the polynomial part, it is necessary to solve one linear system of equations of full order in the online phase for each parameter of interest.
Explicit formulas of the improper Gramians 
are also described in \cite{morSty06}. For a fixed parameter, also an explicit formula 
\begin{align}\label{eq:SAR_index2}
\bS_{\mathrm{i}}^{\mathrm{T}}\bA \bR_{\mathrm{i}}
=
\begin{bmatrix}
B_{11} & C_2(G^{\mathrm{T}}G)^{-1}B_2\\
C_2(G^{\mathrm{T}}G)^{-1}B_2 & 0 
\end{bmatrix}
\end{align}
is provided with 
\[
B_{11}:=C_1G(G^\mathrm{T}G)^{-1}B_2 + C_2(G^\mathrm{T}G)^{-1}G^{\mathrm{T}}B_{12},\qquad
B_{12}:= B_1 - AG(G^{\mathrm{T}}G)^{-1}B_2,
\]
compare with \eqref{eq:SVD_improp}.
This formula replaces the computation of the low-rank factors of the improper Gramians in the online phase.
In our setting, \eqref{eq:SAR_index2} can be evaluated efficiently since all involved matrices can be precomputed and then appropriately scaled with $\mu$ in the parametric setting.

\begin{figure}[bt]
	\centering
	\subfloat[First step of the reduced basis method.]{%
		\includegraphics[scale=0.35]{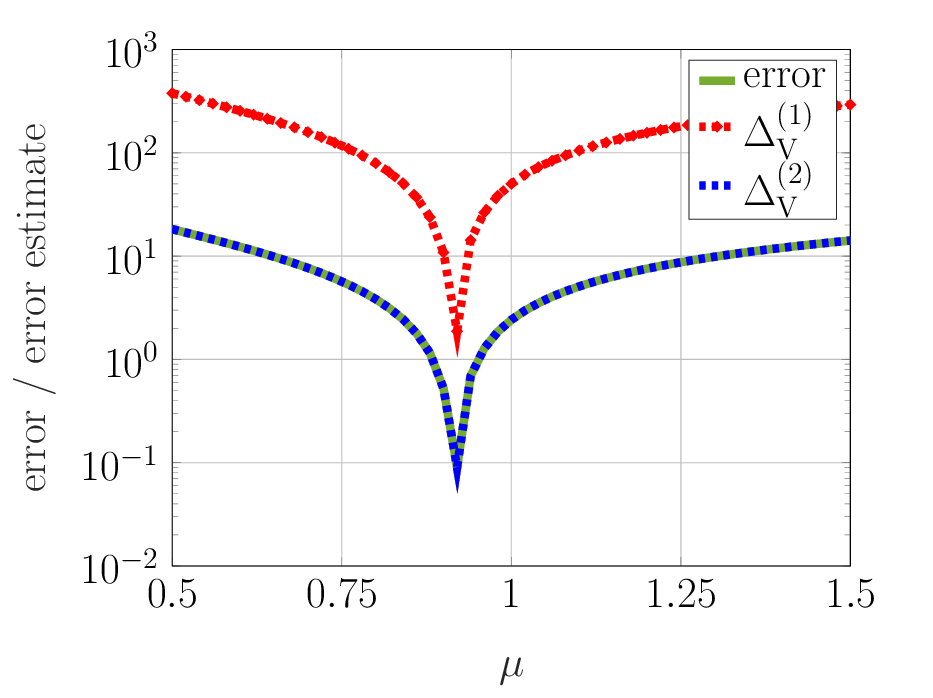}\label{fig:RBM2im_1}}
	\hspace{20pt}
	\subfloat[Second step of the reduced basis method.]{%
		\includegraphics[scale=0.35]{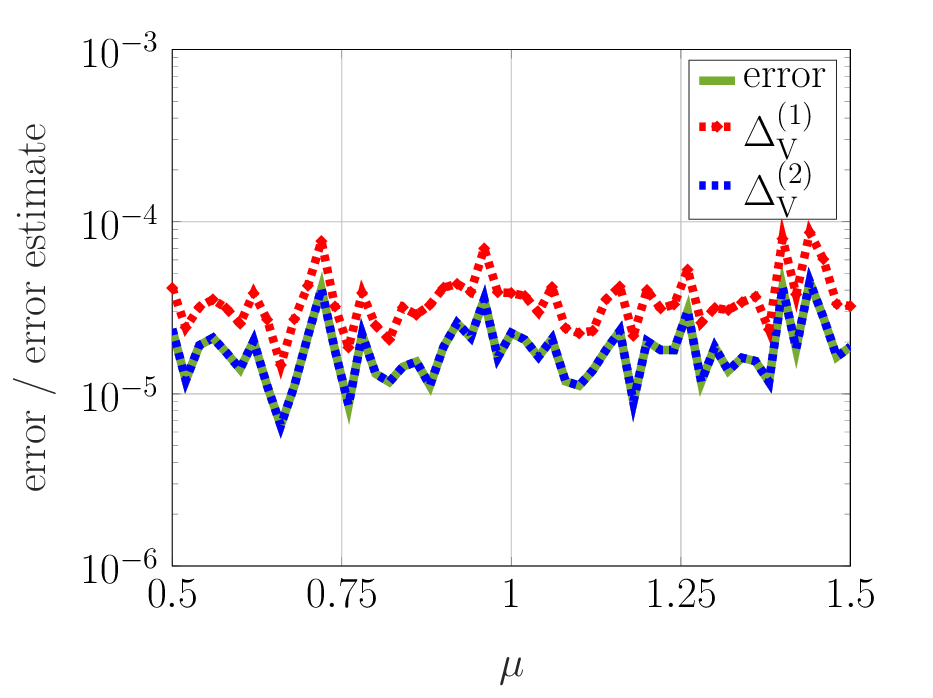}\label{fig:RBM2im_2}}
	\caption{Error and error estimates of the approximated Gramians for the Stokes system~\eqref{Index2} with improper parts. }
	\label{BBMim2pic}
\end{figure}

\begin{figure}[bt]
	\centering
	\subfloat[Sigma plots of the original, reduced, and error transfer functions 
	for the test parameter $\mu=1.28$.]{%
		\includegraphics[scale=0.35]{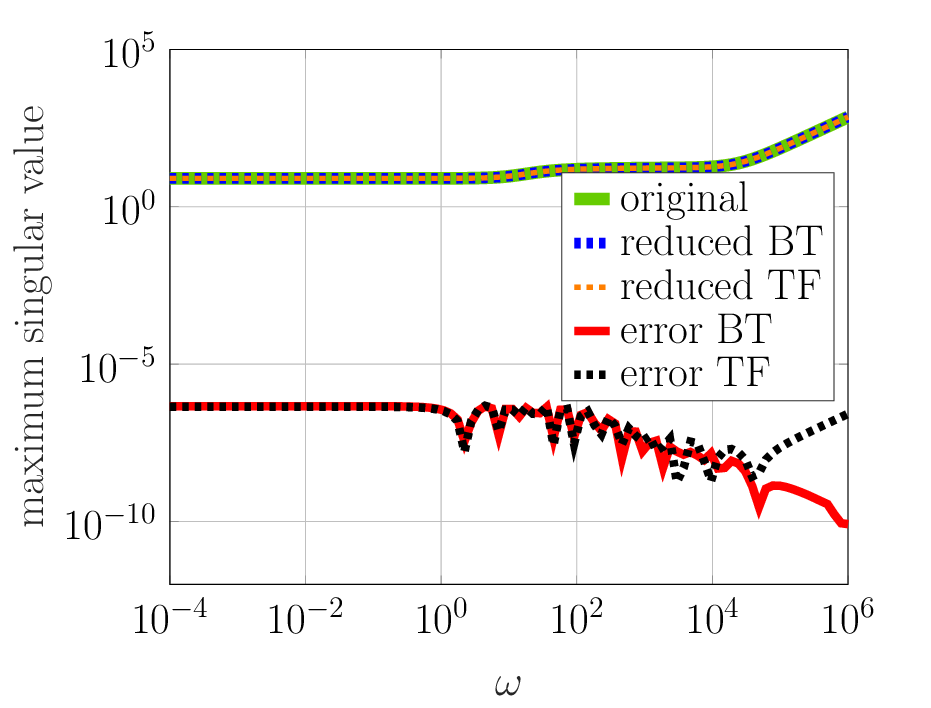}\label{BT2_50impic}}
	\hspace{20pt}
	\subfloat[Sigma plots of the original, reduced, and error transfer functions for the parameter $\mu=0.65$.]{%
		\includegraphics[scale=0.35]{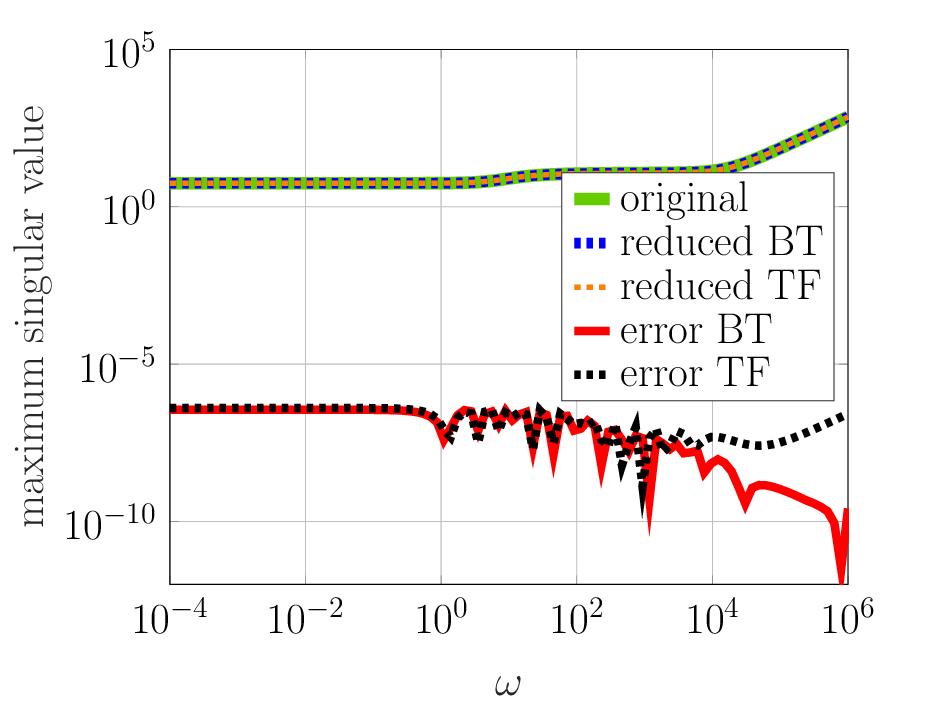}\label{BT2impic}}
	\caption{Results for the reduction of the Stokes system~\eqref{Index2} with improper parts.}
\end{figure}

We demonstrate the quality of the reduced systems by evaluating the error of the transfer functions. 
In the following, we denote the reduced system (error system) with an 
algebraic part generated by balanced truncation as \emph{reduced BT} 
(\emph{error BT}) and the reduced system (error system) with an algebraic 
part  obtained by approximating the polynomial part of the transfer function as 
\emph{reduced TF} (\emph{error TF}).
We show the sigma plot of the original and reduced transfer function as well as the error for two parameters.
The first parameter, depicted in Figure~\ref{BT2_50impic}, is $\mu = 1.28\in\mathcal{D}_{\mathrm{Test}}$ that is an element from the test parameter set and hence we already know that the error estimators $\Delta_{1/2}(\mu)$ are smaller than the tolerance and the proper controllability space is well-approximated.
The second parameter is $\mu = 0.65\in\mathcal{D}$, and the corresponding results are shown in Figure~\ref{BT2impic}. 
The polynomial part of the transfer function is clearly observable since it does not tend to zero for growing values of $\omega$.
We observe that for both parameters, the error is smaller than $10^{-5}$ in the entire frequency band $[10^{-4},\,10^6]$ and that the parameter chosen from the test parameter set does not lead to significantly better approximations than an arbitrarily chosen parameter from the parameter set $\mathcal{D}$.
This allows the conclusion that the basis $\bV_{\mathrm{glob}}$ approximates the proper controllability space well --- for all parameters in $\mathcal{D}$.
Additionally, we can observe that both procedures to approximate the improper part of the system lead to satisfactory results.

Finally, to illustrate that the output $y$ is well approximated by the 
reduced output $y_{\mathrm{R}}$, in Figure \ref{out2_50impic} and  
\ref{out2impic} we evaluate the norms of the output error ${\|y(t) - 
	y_{\mathrm{R}}(t)\|}_2$ for $t \in [0,\,100]$ for the test parameter $\mu=1.28$ and the parameter 
$\mu=0.65$.
We choose the input function $u(t)= 5-5\cdot \mathrm{cos}(t)$ and the initial condition $\bz(0)=\begin{bsmallmatrix}
x(0)\\ \lambda(0)
\end{bsmallmatrix}=0$, in such a way that the consistency conditions are satisfied.
We obtain an output error that is smaller than $10^{-5}$ for all 
$t\in[0,\, 100]$ for both evaluated parameters.
Again, we observe that both approximations of the algebraic part of the system lead to results of similar quality.

\begin{figure}[bt]
	\centering
	\subfloat[Output plots of the original and reduced system and the 
corresponding error for the test parameter $\mu=1.28$.]{%
		\includegraphics[scale=0.35]{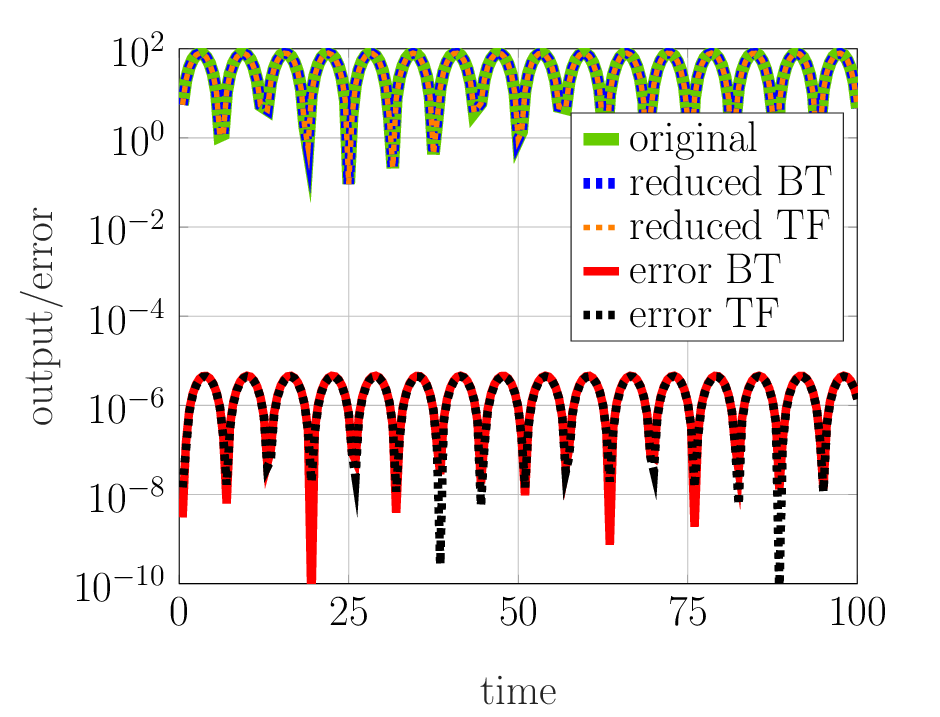}\label{out2_50impic}}
\hspace{20pt}
	\subfloat[Output plots of the original and reduced system and the corresponding error for the parameter $\mu=0.65$.]{%
		\includegraphics[scale=0.35]{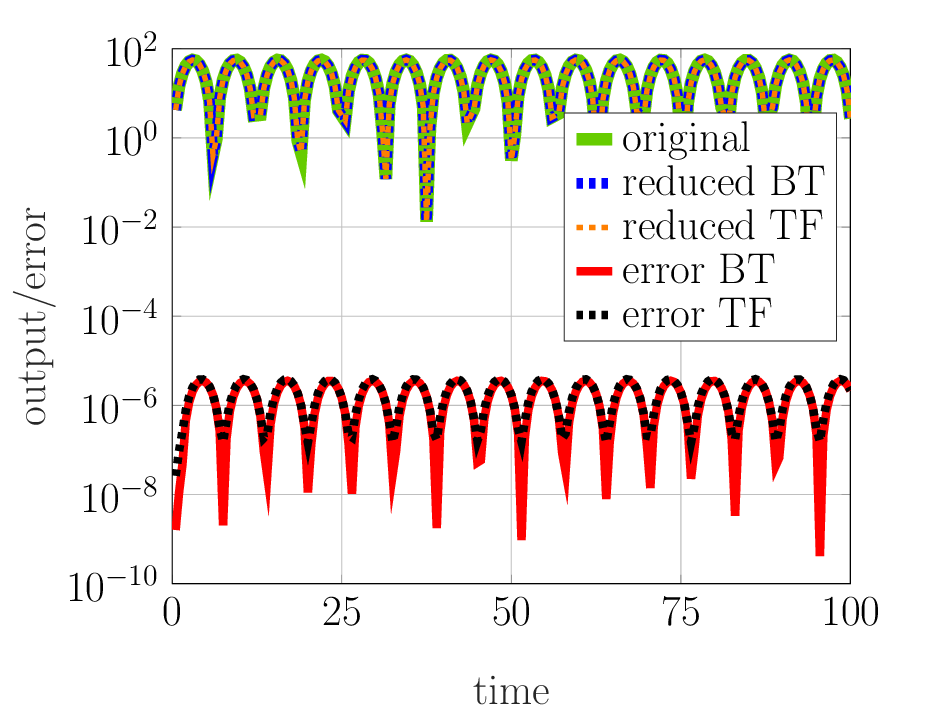}\label{out2impic}}
	\caption{Output and output error of the original and reduced Stokes system~\eqref{Index2} with improper parts}
\end{figure}

\subsection{Problem 2: Mechanical system}

\begin{figure}[bt]
\center
\centering
\scalebox{0.8}{\includegraphics{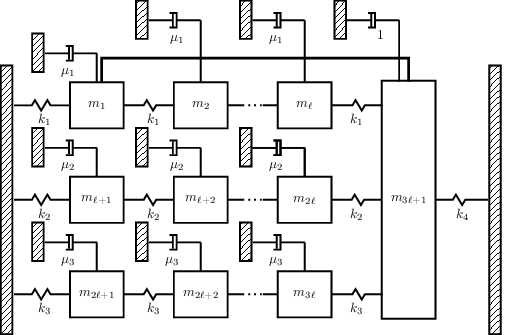}}
\caption{Constrained mass-damper-spring system including three chains of masses that are connected by the mass $m_{3\ell+1}$.}
\label{Index3pic}
\end{figure}

As an example for system \eqref{Index3}, we consider a constrained mass-spring-damper system which is depicted in Figure~\ref{Index3pic} and which is similar to the one considered in \cite{morUgr20}.
The matrices are generated using the function \texttt{triplechain\_MSD} from the M-M.E.S.S.-2.2. toolbox, see \cite{SaaKB22-mmess-2.2} where the mass matrix $M$ is built as
\[
M=\begin{bmatrix}
M_1 &     &     &  \\
& M_2 &     &      \\ 
&     & M_3 &      \\
&     &     &  m_0 \\
\end{bmatrix}, \qquad 
M_i = m_i\cdot I_\ell \in \mathbb{R}^{\ell, \ell},\quad i = 1,\,2,\,3
\]
for $m_1=m_2=m_3=m_0=1$.
The stiffness matrix is built as 
\begin{align*}
K = \begin{bmatrix}
K_{11} & & & \kappa_1 \\
& K_{22} &  & \kappa_2 \\
&  & K_{33} & \kappa_3 \\
\kappa_1^{\mathrm{T}} & \kappa_2^\mathrm{T} & \kappa_3^{\mathrm{T}} & k_1+k_2+k_3+k_0
\end{bmatrix},\quad K_{ii} = k_i \begin{bmatrix}
2  & -1     &        & & \\
-1 & 2      & -1     & &  \\
& \ddots & \ddots & \ddots  &\\
&        & -1     &2        & -1 \\
&        &        &-1 & 2
\end{bmatrix}\in\mathbb{R}^{\ell, \ell},
\end{align*}
with $\kappa_i = \begin{bmatrix}
0&
\dots &
0&
k_i
\end{bmatrix}^{\mathrm{T}}$ and $k_1=k_2=k_3=k_0=1$.
We set $\ell=200$ so that the dimension of $M$ and $K$ is $n=601$ and the matrix $G\in\mathbb{R}^{n,1}$ is a zero matrix except for the entries $G_{1,1} = 1$ and $G_{n,1} = -1$.
Hence, the first and the last mass are additionally connected by a rigid structure, as shown in Figure~\ref{Index3pic}.
Hence, the first-order system is of dimension $2n+2=1204$ after applying the transformation presented in Subsubsection~\ref{sssec:sdIndex3}.
The input matrix $B_x\in\mathbb{R}^{n,1}$ is chosen as a zero matrix with one nonzero entry 
$
(B_x)_{450,1} = 1.
$
The output matrix $C_x\in\mathbb{R}^{1,n}$ is given by $C_x = B_x^{\mathrm{T}}$.

These systems occur in the field of damping optimization, see \cite{morBenTT11a,morTruTP19,morUgr20,TruV09}, where the damping matrix $D\in\mathbb{R}^{n, n}$ consists of an internal damping $D_{\mathrm{int}}$ and some external dampers that need to be optimized.
In our example, we choose Rayleigh damping as a model for the internal damping, i.\,e., $D_{\mathrm{int}} = \eta M + \rho K$, where we set $\eta = 0.01$ and $\rho = 0.02$.
To describe the external damping, we assume that every mass is connected to a grounded damper, where the masses $m_1,\,\ldots,\, m_{\ell}$ are connected by dampers with viscosity $\mu_1$, the masses $m_{\ell+1},\,\ldots,\, m_{2\ell}$ by dampers with viscosity $\mu_2$ and the masses $m_{2\ell+1},\,\ldots,\, m_{3\ell}$ by dampers with viscosity $\mu_3$ as depicted in Figure \ref{Index3pic}.
The mass $m_{3\ell+1}$ is connected to a grounded damper with fixed viscosity $1$.
The external dampers can be described by the matrices 
\[
F_1 = \begin{bmatrix}
I_{\ell} &  \\
& 0_{2\ell+1,2\ell+1} 
\end{bmatrix}, \quad F_2 = \begin{bmatrix}
0_{\ell,\ell} & &  \\
& I_{\ell} &  \\
& & 0_{\ell+1,\ell+1} 
\end{bmatrix}, \quad F_3 = \begin{bmatrix}
0_{2\ell,2\ell} & & \\
& I_{\ell} & \\
& & 0
\end{bmatrix}, \quad F_4 = \begin{bmatrix}
0_{3{\ell},3\ell}& \\
& 1
\end{bmatrix},
\]
where $0_{n_1,n_2}$ denotes the zero matrix in $\mathbb{R}^{n_1,n_2}$. 
Therefore, the overall damping matrix can then be constructed as 
\begin{align*}
D(\mu) &:= D_{\mathrm{int}} + \mu_1F_1 + \mu_2F_2 + \mu_3F_3 + F_4,
\end{align*}
where $\mu = [\mu_1, \mu_2, \mu_3] \in \mathcal{D}$ are the viscosities of the dampers, which are optimized and thus variable in our model.
This model leads to a differential-algebraic system of index 3.
We generate the index-2 surrogate system as shown in Subsubsection \ref{sssec:sdIndex3} using the value $\gamma(\mu)=\frac{1}{2}\gamma^*(\mu)$ from \eqref{eq:gamma_star}.

We consider two parameter settings. In the first one, the system is damped more strongly, and in the second one more weakly. The latter setting is used to illustrate that the slightly damped system is more difficult to approximate by projection spaces of small dimensions, which poses a challenge from the numerical point of view.
For both parameter settings, the reduced models are obtained by truncating all state variables corresponding to proper Hankel singular values with $\sigma_{\mathrm{p},k}/\sigma_{\mathrm{p},1} < 10^{-8}$.

\subsubsection{Setting 1: Stronger external damping}

We consider the parameter set $\mathcal{D}=[0.1, 1]^3$ and choose the test parameter set $\mathcal{D}_{\mathrm{Test}}$ to consist of 50 randomly selected parameter triples from $\mathcal{D}$. 
Therefore, we use the $\mathtt{rand}$ operator from \matlab{} so that the $k$-th test parameter is equal to $\mu= 0.1+0.9\cdot\mathtt{rand}(3,1)$ with fixed seed.

\begin{figure}[bt]
	\centering
	\includegraphics[scale=0.35]{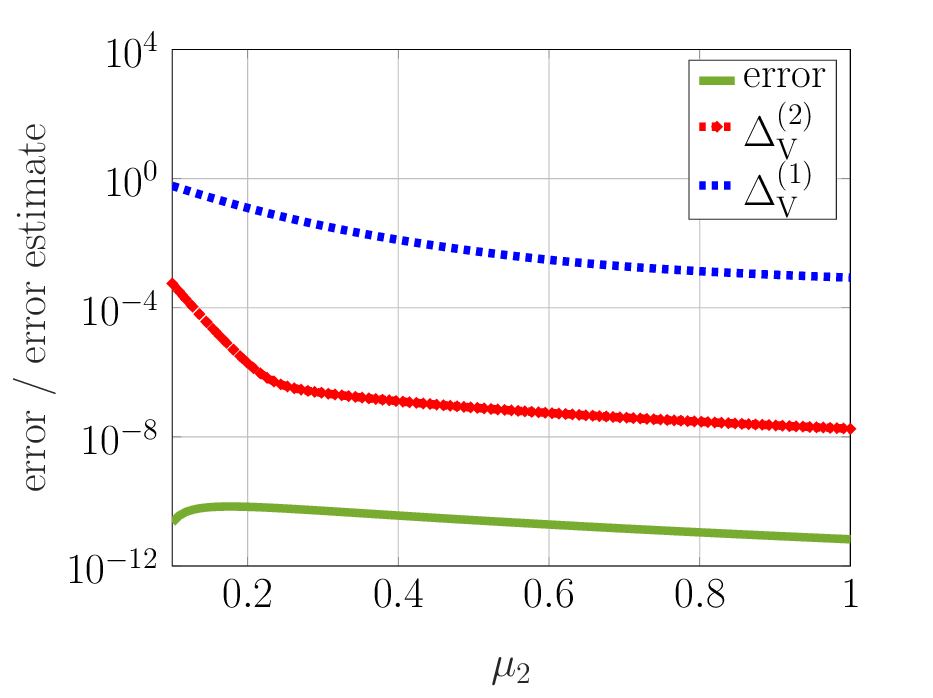}
	\caption{Error and error estimates of the approximated controllability Gramians of the mechanical system~\eqref{Index3} with stronger external damping and $\mu_1= 0.25$, $\mu_3=0.35$ after the first iteration of the reduced basis method.}
	\label{fig:RBM_contr}
\end{figure}

\begin{figure}[bt]
	\centering
	\subfloat[Sigma plots of the original, reduced, and error transfer functions 
	for the test parameter $\mu=\begin{bmatrix}
	0.2622,\, 0.1175,\, 0.5169
	\end{bmatrix}$.]{%
		\includegraphics[scale=0.35]{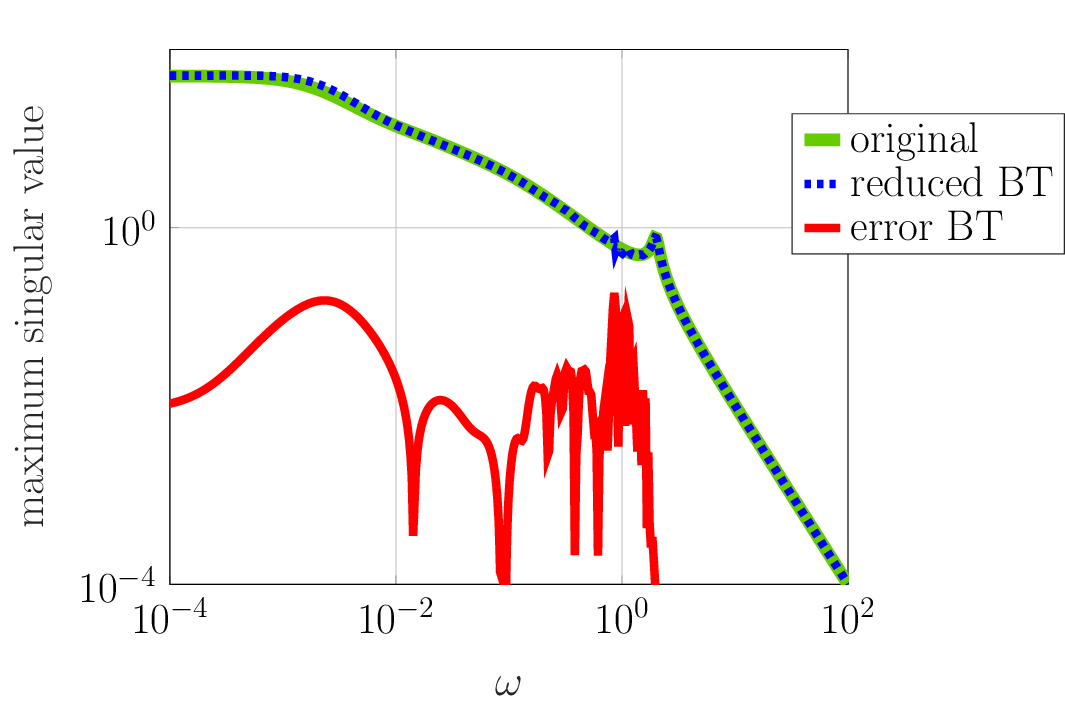}}
	\hspace{20pt}
	\subfloat[Sigma plots of the original, reduced, and error transfer functions 
	for the parameter $\mu=\begin{bmatrix}
	0.15,\, 0.15,\, 0.5
	\end{bmatrix}$.]{%
		\includegraphics[scale=0.35]{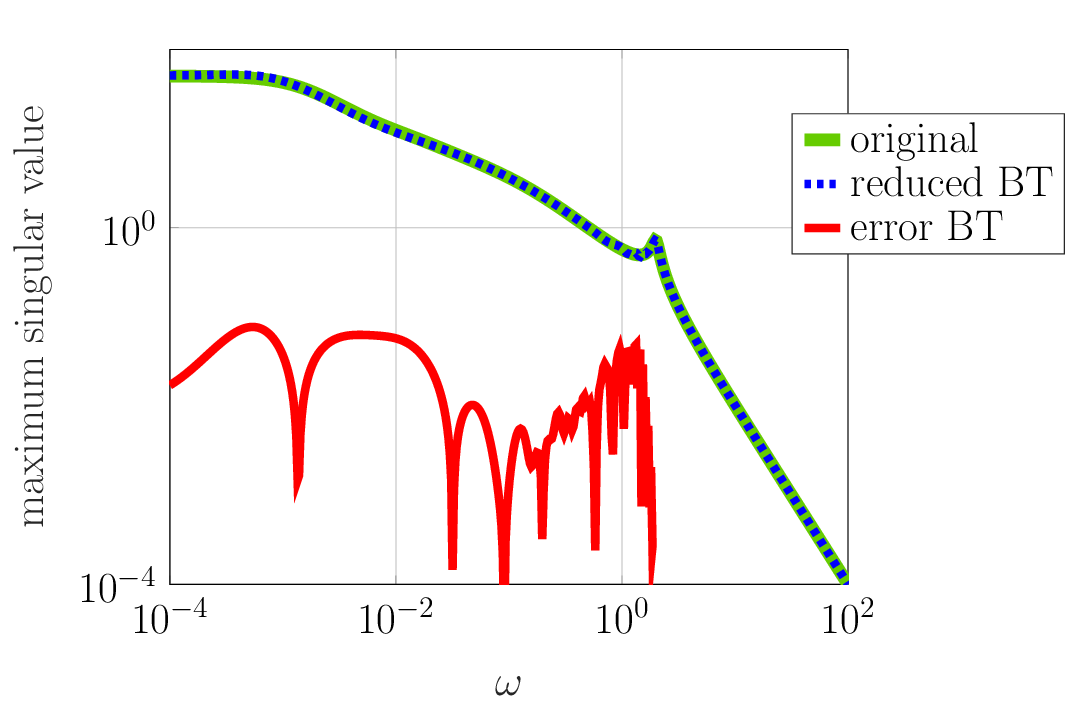}}
	\caption{Results for the reduction of the mechanical system \eqref{Index3} with stronger external damping.}
	\label{BT3pic}
\end{figure}

In the offline phase, the approximation of the controllability and observability spaces takes $1.4\cdot 10^{4}$ seconds for this example.
For the proper controllability space, we evaluate the errors and the error estimates that are depicted in Figure \ref{fig:RBM_contr} for the first iteration of the reduced basis method. For evaluating the error, the Lyapunov equations \eqref{eq:propLE_contr} are also solved by computing the projection matrices $\bPil(\mu)$ and $\bPir(\mu)$ explicitly and solving the corresponding Lyapunov equation.
Therefore, we fix the parameters $\mu_1= 0.25$ and $\mu_3=0.35$ and show the errors for the varying second parameter $\mu_2 \in[0.1,1]$.
After the first iteration and for a basis of dimension $219$, the maximal error estimate $\Delta_{\bV}^{(2)}$ is equal to $9.48\cdot 10^{-5}$.
Hence, the basis approximates the proper controllability space sufficiently well.
Again, we see that the error estimator $\Delta_{\bV}^{(2)}$ approximates the error $\mathfrak{E}(\cdot)$ better than the estimation by $\Delta_\bV^{(1)}$, which massively overestimates the error. 
Hence, we only use the error estimator $\Delta_{\bV}^{(2)}$ to determine the basis quality.

We need two iterations of the reduced basis method to determine the respective observability subspace of dimension of $304$.
Hence, we have to solve Lyapunov equations of dimensions $219$ and $304$ in the online phase to compute the Gramian approximations.
We apply the online phase for the parameters $\mu=\begin{bmatrix}0.2622,\, 0.1175,\, 0.5169
\end{bmatrix}\in\mathcal{D}_{\mathrm{Test}}\subset\mathcal{D}$ (which is rounded to 4 digits here) and $\mu = \begin{bmatrix}0.15,\, 0.15,\, 0.5\end{bmatrix}\in\mathcal{D}$.
We obtain the two respective reduced systems of dimension $75$ and $63$, where all states are differential ones. 
The online phases take $6.4\cdot 10^{-1}$ and $1.3\cdot 10^{-1}$ seconds, respectively, for this example.
\begin{figure}[bt]
	\centering
	\subfloat[Output plots of the original and reduced system and the corresponding error for the test parameter $\mu=\begin{bmatrix}
	0.2622,\, 0.1175,\, 0.5169
	\end{bmatrix}$.]{%
		\includegraphics[scale=0.35]{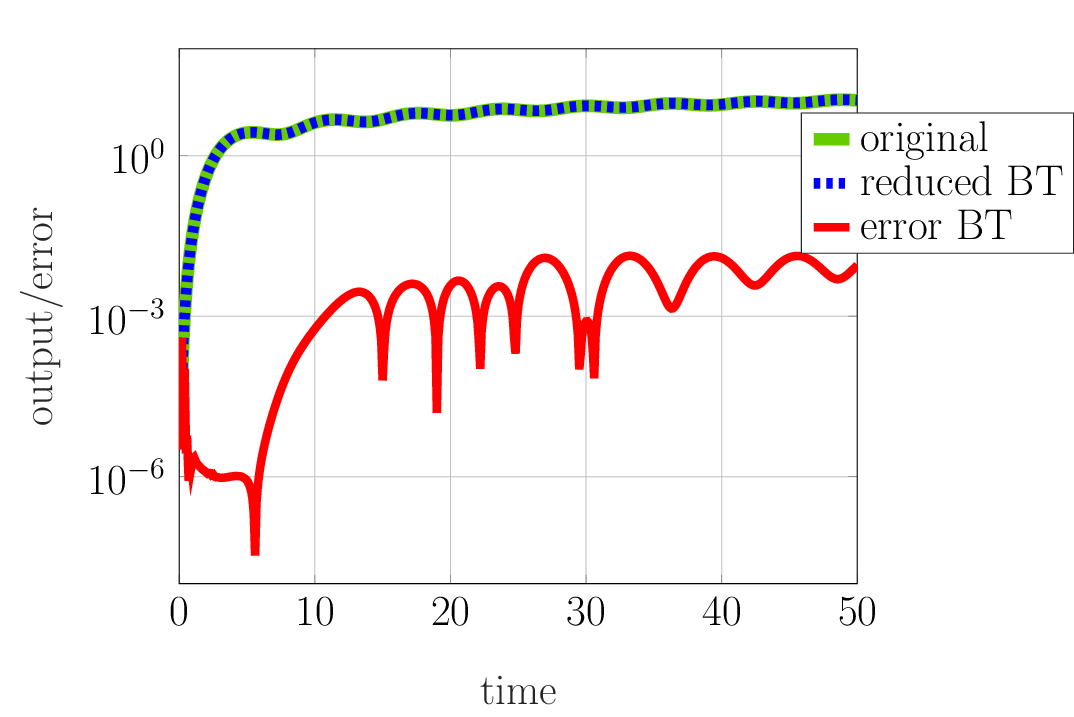}}
	\hspace{20pt}
	\subfloat[Output plots of the original and reduced system and the corresponding error for the parameter $\mu=\begin{bmatrix}
	0.15,\, 0.15,\, 0.5
	\end{bmatrix}$.]{%
		\includegraphics[scale=0.35]{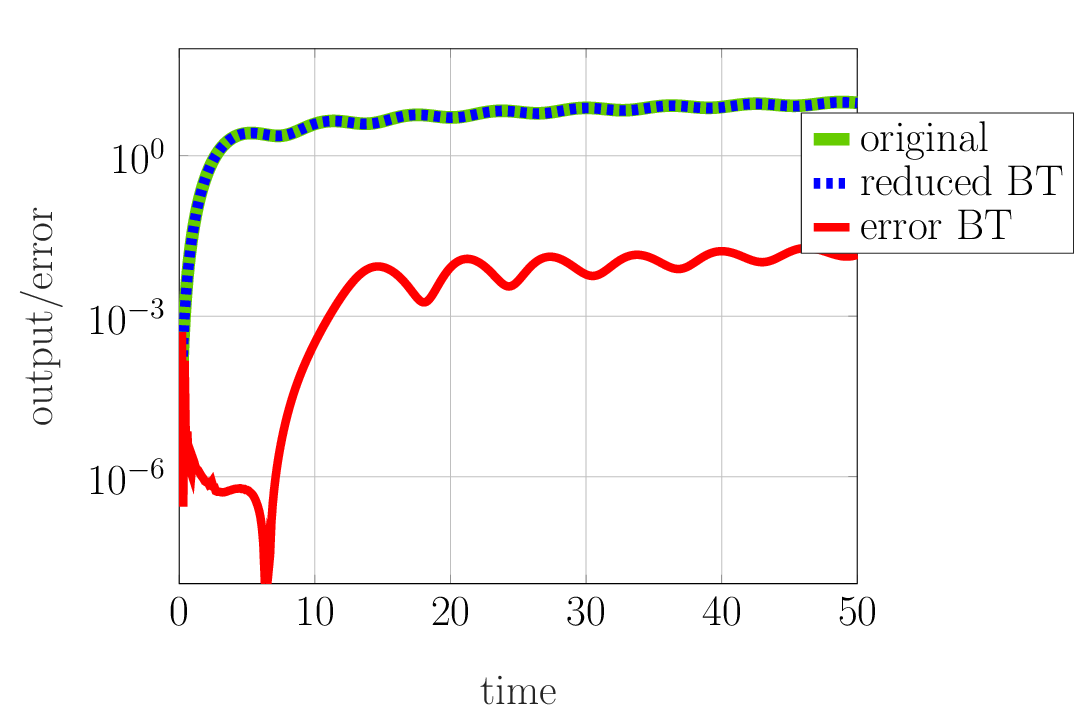}}
	\caption{Output and output error of the original and reduced system \eqref{Index3} with stronger external damping.}
	\label{out3pic}
\end{figure}
In Figure~\ref{BT3pic}, we depict the original, the reduced, and the error transfer function.
Additionally, we evaluate the outputs of the original and the reduced system $y$ and $y_{\mathrm{R}}$. The results are shown in Figure~\ref{out3pic}, where we observe the norm of the output error ${\|y(t) - y_{\mathrm{R}}(t)\|}_2$ for $t \in [0,50]$.
We choose the input function $u(t)= 1-1\cdot \cos(t)$ and the initial condition $\bz(0)=\begin{bsmallmatrix}
x(0)\\ \lambda(0)
\end{bsmallmatrix}=0$, so that the consistency conditions are satisfied. 
We observe that the output error is smaller than $10^{-2}$ for all $t\in[0,\, 50]$.
Note that for damping optimization examples, the proper controllability and observability spaces are often hard to approximate by spaces of small dimensions, see \cite{morTomBG18,morBenKTetal16}. Hence, the tolerances are chosen rather large for this kind of example to achieve satisfying results.

\subsubsection{Setting 2: Weaker external damping}

Now, we consider a parameter set with viscosities of small magnitudes, that is $\mathcal{D}=[0.01, 0.1]^3$.
The test-parameter set $\mathcal{D}_{\mathrm{Test}}$ consists of 50 randomly selected parameters from $\mathcal{D}$.
To generate the test parameters, we again use the $\mathtt{rand}$ operator from \matlab{} so that the $k$-th parameter is equal to $\mu = 0.01+0.09\cdot\mathtt{rand}(3,1)$ with fixed seed.

\begin{figure}[bt]
	\centering
	\includegraphics[scale=0.35]{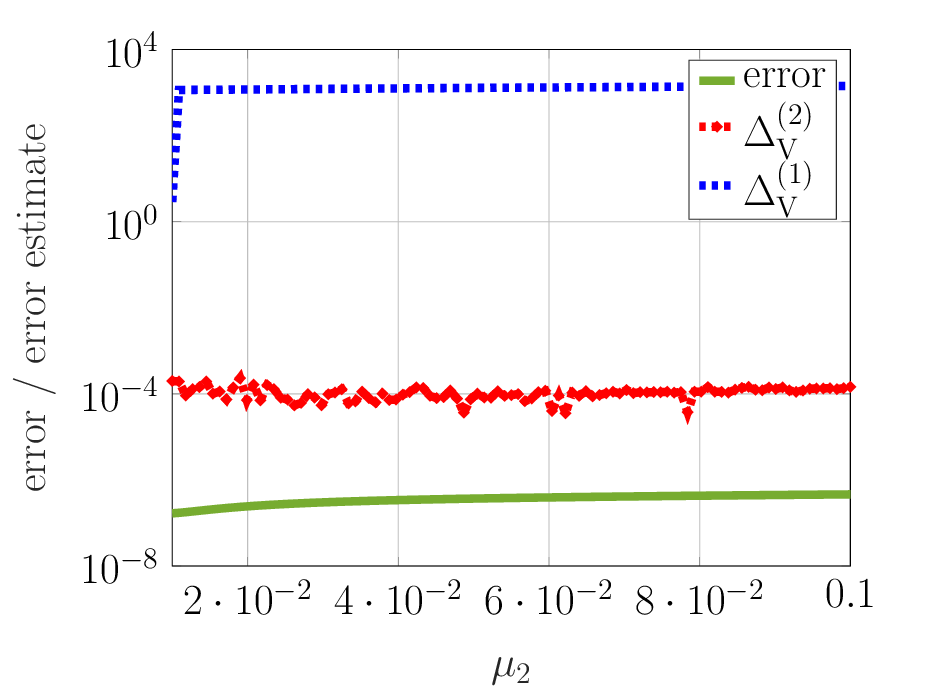}
	\caption{Error and error estimates of the approximated observability Gramians of the mechanical system~\eqref{Index3} with weaker external damping and $\mu_1= 0.025$, $\mu_3=0.035$ after the first iteration of the reduced basis method.}
	\label{fig:RBM_contr_Dsmall}
\end{figure}

\begin{figure}[bt]
	\centering
	\subfloat[Sigma plots of the original, reduced, and error transfer functions 
	for the test parameter $\mu=\begin{bmatrix}
	0.0886,\, 0.0972,\, 0.0882
	\end{bmatrix}$.]{%
		\includegraphics[scale=0.35]{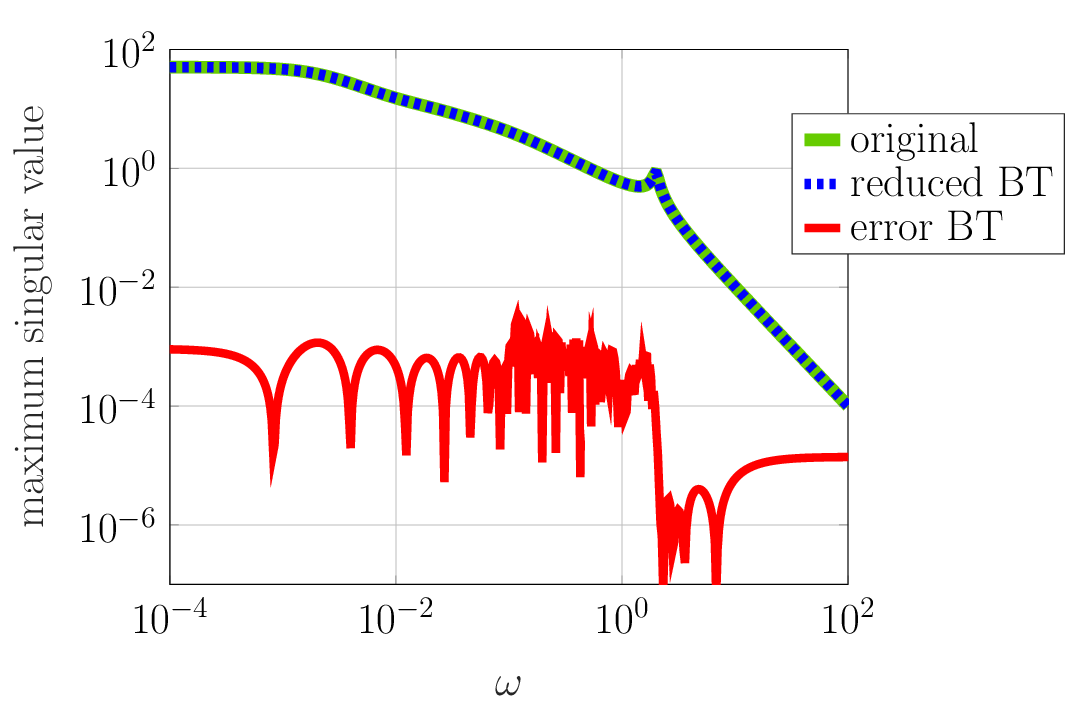}}
	\hspace{20pt}
	\subfloat[Sigma plots of the original, reduced, and error transfer functions 
	for the parameter $\mu=\begin{bmatrix}
	0.01,\, 0.02,\, 0.01
	\end{bmatrix}$. The magnifying glass illustrates that the reduced-order model captures the strong oscillations in the sigma plot very well.]{%
		\includegraphics[scale=0.35]{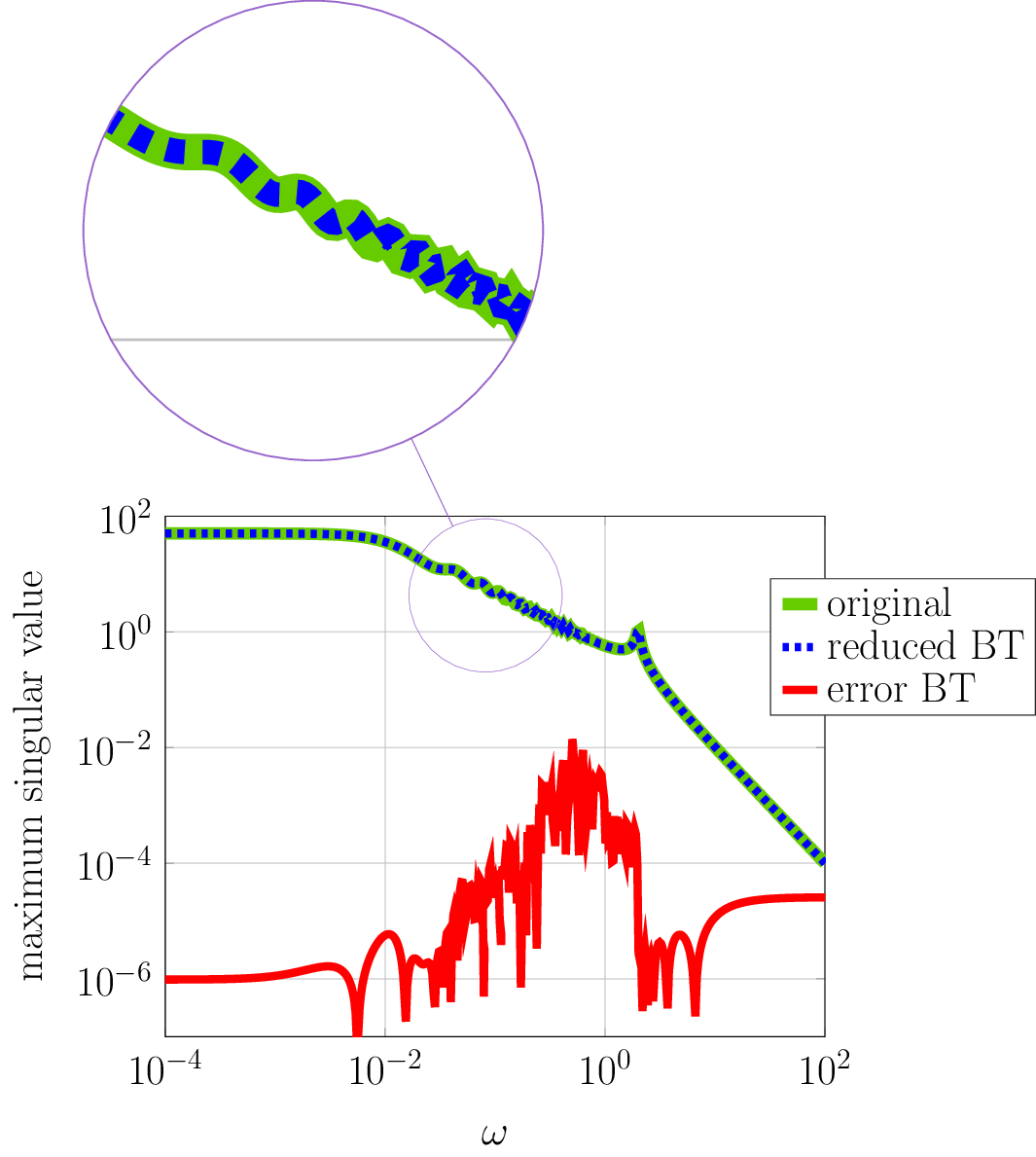}}
	\caption{Results for the reduction of the mechanical system \eqref{Index3} with weaker external damping.}
	\label{BT3pic_Dsmall}
\end{figure}

First, we apply the offline phase of the reduced basis method that takes $6.9\cdot 10^{3}$ seconds.
When computing the proper controllability space, two iterations are needed to generate a subspace of dimension $449$ that leads to error estimates $\Delta_{\bV}^{(2)}(\mu)$ that are at most $5.6 \cdot 10^{-4}$.
For the proper observability space, the errors and the error estimates are illustrated in Figure~\ref{fig:RBM_contr_Dsmall} within the first iteration of the reduced basis method with fixed parameters $\mu_1= 0.025$ and $\mu_3=0.035$ and varying $\mu_2 \in [0.01,0.1]$. For evaluating the error, the Lyapunov equations \eqref{eq:propLE_observ} are also solved by computing the projection matrices $\bPil(\mu)$ and $\bPir(\mu)$ explicitly and solving the corresponding Lyapunov equation.
After the first iteration, we determine a basis of dimension $223$ for which the maximal error estimate $\Delta_{\bV}^{(2)}$ is equal to $5.8\cdot 10^{-4} $, and hence, sufficient accuracy is achieved. 
As for the first parameter setting, the error estimator $\Delta_{\bV}^{(2)}$ approximates the error $\mathfrak{E}(\cdot)$ better than the estimation by $\Delta_\bV^{(1)}$.
Hence, we use the error estimator $\Delta_{\bV}^{(2)}$ within the reduced basis method.
We apply the online phase for the parameters $\mu=\begin{bmatrix}
0.0886,\, 0.0972,\, 0.0882
\end{bmatrix}\in\mathcal{D}_{\mathrm{Test}}\subset\mathcal{D}$ (which is rounded to 4 digits) and $\mu = \begin{bmatrix}0.01,\, 0.02,\, 0.01\end{bmatrix}\in\mathcal{D}$ to derive a reduced-order model of dimension $78$ for the first parameter and the reduced dimension $178$ for the second one. 
The online phases need $2.21$ and $2.04$ seconds, respectively.
\begin{figure}[bt]
	\centering
	\subfloat[Output plots of the original and reduced system and the corresponding error for the test parameter $\mu=\begin{bmatrix}
	0.089,\, 0.097,\, 0.088
	\end{bmatrix}$.]{%
		\includegraphics[scale=0.35]{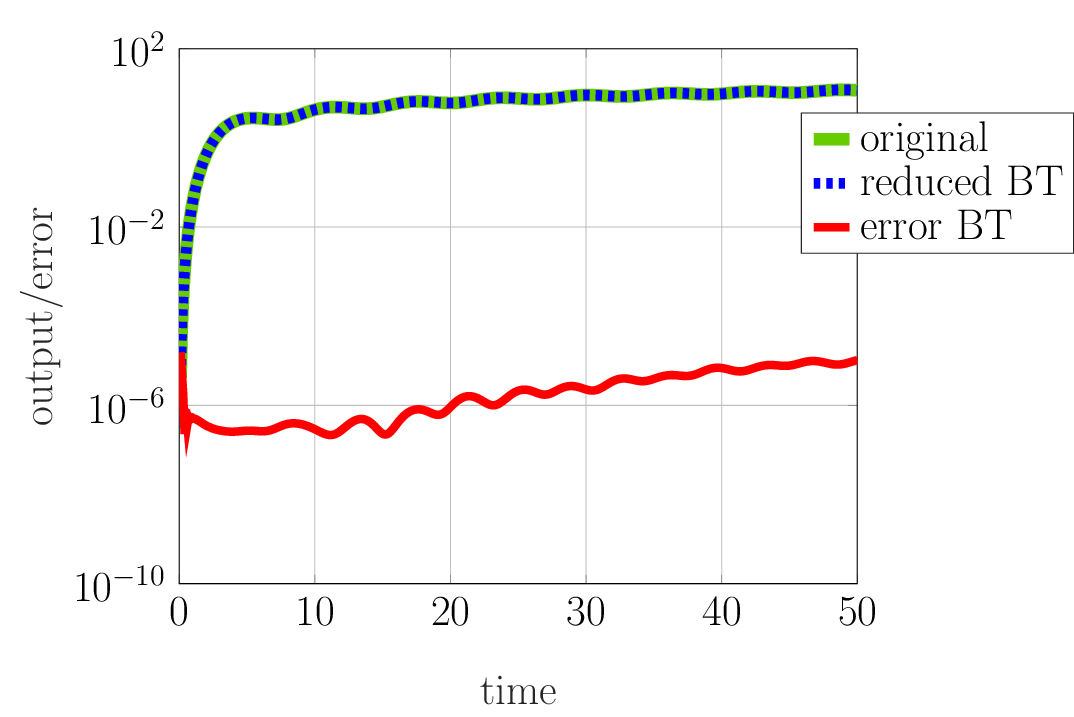}}
	\hspace{20pt}
	\subfloat[Output plots of the original and reduced system and the corresponding error for the parameter $\mu=\begin{bmatrix}0.01,\, 0.02,\, 0.01\end{bmatrix}$.]{%
		\includegraphics[scale=0.35]{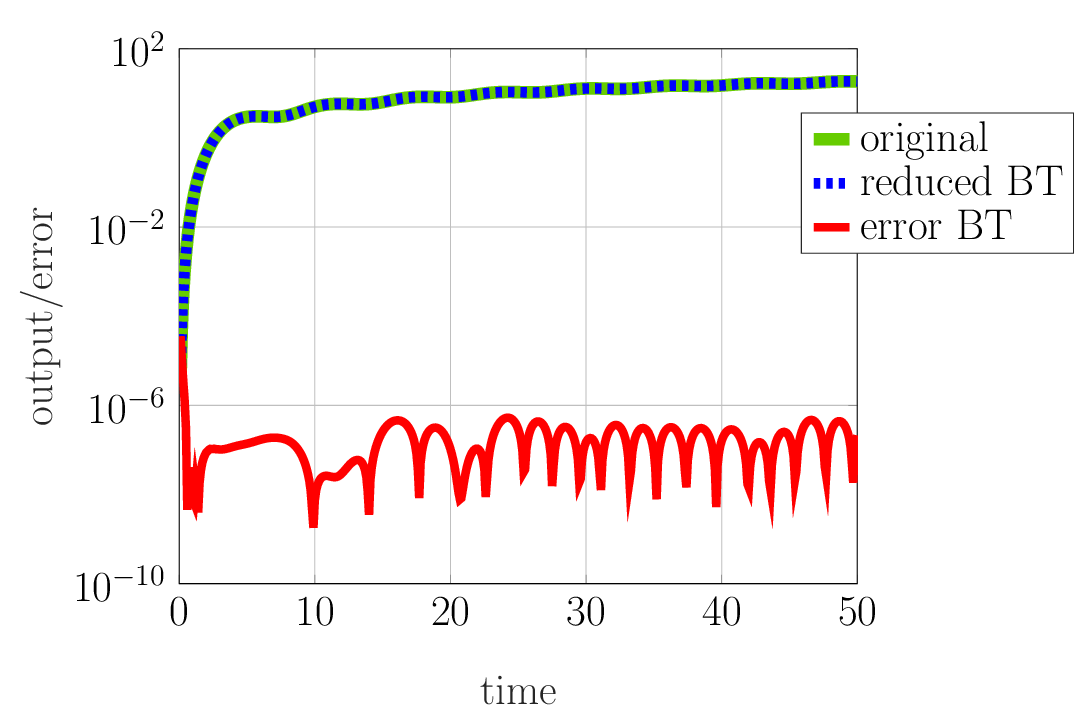}}
	\caption{Output and output error of the original and reduced mechanical system \eqref{Index3} with weaker external damping.}
	\label{out3pic_Dsmall}
\end{figure}
Figure \ref{BT3pic_Dsmall} depicts the original, the reduced, and the error transfer function.
Additionally, we evaluate the norm of the output error ${\|y(t) - y_{\mathrm{R}}(t)\|}_2$ for $t \in [0,50]$ in Figure \ref{out3pic_Dsmall}, where the input function $u(t)= 1-1\cdot \mathrm{cos}(t)$ and the initial condition $\bz(0)=\begin{bsmallmatrix}
x(0)\\ \lambda(0)
\end{bsmallmatrix}=0$ are chosen in such a way that the consistency conditions are satisfied. 
We observe that the output error is smaller than $10^{-4}$ for all $t\in[0,\, 50]$.


\section{Conclusions}
This paper has addressed the model reduction of parameter-dependent differential-algebraic systems by balanced truncation.
To apply the balancing procedure we have utilized specific projectors to eliminate the algebraic constraints. This has enabled us to compute the necessary Gramians efficiently by solving projected Lyapunov equations.

To handle the parameter-dependency, we have applied the reduced basis method, which is split into the \emph{offline phase} and the \emph{online phase}. 
In the \emph{offline phase}, we have computed the basis of the subspace which approximates the solution space of the parametric Lyapunov equations.
To assess the quality of this subspace we have derived two error estimators.
Afterwards, in the \emph{online phase}, we have solved a reduced Lyapunov equation to obtain an approximation of the Gramian for a parameter value of interest efficiently. Therefore, a balanced truncation for this parameter value can also be carried out fast.

This method has been illustrated by numerical examples of index two and three. In particular, we were often able to reduce the associated Lyapunov equations to very small dimensions. We have evaluated our error estimators, where the second one can often estimate the error almost exactly. 

\section*{Code availability} The code and data that has been used to generate the numerical results of this work are freely available under the DOI
10.5281/zenodo.10040792
under the 3-clause BSD license and is authored by Jennifer Przybilla.

\section*{CRediT author statement} \textbf{Jennifer Przybilla:} Methodology, Software, Data Curation, Writing --- Original Draft, Writing --- Review \& Editing, Visualization; \textbf{Matthias Voigt:}  Conceptualization, Writing --- Original Draft, Writing --- Review \& Editing, Supervision.

\bibliographystyle{plainurl}
\bibliography{References,csc,mor,software}

\begin{thebibliography}{10}

\bibitem{morAliBSetal14}
G.~Al{\`i}, N.~Banagaaya, W.~H.~A. Schilders, and C.~Tischendorf.
\newblock Index-aware model order reduction for differential-algebraic
  equations.
\newblock {\em Math. Comput. Model. Dyn. Syst.}, 20(4):345--373, 2014.
\newblock \href {https://doi.org/10.1080/13873954.2013.829501}
  {\path{doi:10.1080/13873954.2013.829501}}.

\bibitem{morAnt05}
A.~C. Antoulas.
\newblock {\em Approximation of Large-Scale Dynamical Systems}, volume~6 of
  {\em Adv. Des. Control}.
\newblock {SIAM} Publications, Philadelphia, PA, 2005.
\newblock \href {https://doi.org/10.1137/1.9780898718713}
  {\path{doi:10.1137/1.9780898718713}}.

\bibitem{morAntGH20b}
A.~C. Antoulas, I.~V. Gosea, and M.~Heinkenschloss.
\newblock Data-driven model reduction for a class of semi-explicit {DAE}s using
  the {L}oewner framework.
\newblock In T.~Reis and A.~Ilchmann, editors, {\em Progress in
  Differential-Algebraic Equations II}, Differ.-Algebr. Equ. Forum, pages
  185--210. Springer, 2020.
\newblock \href {https://doi.org/10.1007/978-3-030-53905-4_7}
  {\path{doi:10.1007/978-3-030-53905-4_7}}.

\bibitem{Ban14}
N.~Banagaaya.
\newblock {\em Index-Aware Model Order Reduction Methods for {DAE}s}.
\newblock Proefschrift, Technische Universiteit Eindhoven, The Netherlands,
  2014.
\newblock \href {https://doi.org/10.6100/IR780937}
  {\path{doi:10.6100/IR780937}}.

\bibitem{morBanAS16}
N.~Banagaaya, G.~Ali, and W.~H.~A. Schilders.
\newblock {\em {I}ndex-aware {M}odel {O}rder {R}eduction {M}ethods}, volume~2
  of {\em Atlantis Stud. Sci. Comput. Electromagn.}
\newblock Atlantis Press, 2016.

\bibitem{BarS72}
R.~H. Bartels and G.~W. Stewart.
\newblock Solution of the matrix equation ${AX}+{XB}={C}$: {A}lgorithm~432.
\newblock {\em Comm. {ACM}}, 15:820--826, 1972.
\newblock \href {https://doi.org/10.1145/361573.361582}
  {\path{doi:10.1145/361573.361582}}.

\bibitem{morBauBBetal11}
U.~Baur, C.~A. Beattie, P.~Benner, and S.~Gugercin.
\newblock Interpolatory projection methods for parameterized model reduction.
\newblock {\em {SIAM} J. Sci. Comput.}, 33(5):2489--2518, 2011.
\newblock \href {https://doi.org/10.1137/090776925}
  {\path{doi:10.1137/090776925}}.

\bibitem{morBauBF14}
U.~Baur, P.~Benner, and L.~Feng.
\newblock Model order reduction for linear and nonlinear systems: A
  system-theoretic perspective.
\newblock {\em Arch. Comput. Methods Eng.}, 21(4):331--358, 2014.
\newblock \href {https://doi.org/10.1007/s11831-014-9111-2}
  {\path{doi:10.1007/s11831-014-9111-2}}.

\bibitem{morBeaGM22}
C.~A. Beattie, S.~Gugercin, and V.~Mehrmann.
\newblock Structure-preserving interpolatory model reduction for
  port-{H}amiltonian differential-algebraic systems.
\newblock In C.~Beattie, P.~Benner, M.~Embree, S.~Gugercin, and S.~Lefteriu,
  editors, {\em Realization and Model Reduction of Dynamical Systems}, pages
  235--254. Springer, Cham, 2022.
\newblock \href {https://doi.org/10.1007/978-3-030-95157-3_13}
  {\path{doi:10.1007/978-3-030-95157-3_13}}.

\bibitem{morBenGW15}
P.~Benner, S.~Gugercin, and K.~Willcox.
\newblock A survey of projection-based model reduction methods for parametric
  dynamical systems.
\newblock {\em {SIAM} Rev.}, 57(4):483--531, 2015.
\newblock \href {https://doi.org/10.1137/130932715}
  {\path{doi:10.1137/130932715}}.

\bibitem{morBenKTetal16}
P.~Benner, P.~K{\"u}rschner, Z.~Tomljanovi{\'c}, and N.~Truhar.
\newblock Semi-active damping optimization of vibrational systems using the
  parametric dominant pole algorithm.
\newblock {\em ZAMM Z. Angew. Math. Mech.}, 96(5):604--619, 2016.
\newblock \href {https://doi.org/10.1002/zamm.201400158}
  {\path{doi:10.1002/zamm.201400158}}.

\bibitem{morBenOCetal17}
P.~Benner, M.~Ohlberger, A.~Cohen, and K.~Willcox, editors.
\newblock {\em Model Reduction and Approximation: Theory and Algorithms}.
\newblock Comput. Sci. Eng. SIAM, Philadelphia, PA, 2017.
\newblock \href {https://doi.org/10.1137/1.9781611974829}
  {\path{doi:10.1137/1.9781611974829}}.

\bibitem{morBenSU16}
P.~Benner, J.~Saak, and M.~M. Uddin.
\newblock Balancing based model reduction for structured index-2 unstable
  descriptor systems with application to flow control.
\newblock {\em Numer. Algebra Control Optim.}, 6(1):1--20, 2016.
\newblock \href {https://doi.org/10.3934/naco.2016.6.1}
  {\path{doi:10.3934/naco.2016.6.1}}.

\bibitem{BenS13}
P.~Benner and A.~Schneider.
\newblock Balanced truncation for descriptor systems with many terminals.
\newblock Max Planck Institute Magdeburg Preprints MPIMD/13-17, 2013.
\newblock Available at
  \url{https://csc.mpi-magdeburg.mpg.de/preprints/2013/MPIMD13-17.pdf}.

\bibitem{morBenS17}
P.~Benner and T.~Stykel.
\newblock Model order reduction for differential-algebraic equations: A survey.
\newblock In A.~Ilchmann and T.~Reis, editors, {\em Surveys in
  Differential-Algebraic Equations IV}, Differ.-Algebr. Equ. Forum, pages
  107--160. Springer, Cham, 2017.
\newblock \href {https://doi.org/10.1007/978-3-319-46618-7_3}
  {\path{doi:10.1007/978-3-319-46618-7_3}}.

\bibitem{morBenTT11a}
P.~Benner, Z.~Tomljanovi{\'c}, and N.~Truhar.
\newblock Dimension reduction for damping optimization in linear vibrating
  systems.
\newblock {\em ZAMM Z. Angew. Math. Mech.}, 91(3):179--191, 2011.
\newblock \href {https://doi.org/10.1002/zamm.201000077}
  {\path{doi:10.1002/zamm.201000077}}.

\bibitem{BerIT12}
T.~Berger, A.~Ilchmann, and S.~Trenn.
\newblock The quasi-{W}eierstra{\ss} form for regular matrix pencils.
\newblock {\em Linear Algebra Appl.}, 436:4052--4069, 2012.
\newblock \href {https://doi.org/10.1016/j.laa.2009.12.036}
  {\path{doi:10.1016/j.laa.2009.12.036}}.

\bibitem{Brenan1995}
K.~E. Brenan, S.~L. Campbell, and L.~R. Petzold.
\newblock {\em Numerical Solution of Initial-Value Problems in
  Differential-Algebraic Equations}.
\newblock SIAM, Philadelphia, PA, 1995.
\newblock \href {https://doi.org/10.1137/1.9781611971224}
  {\path{doi:10.1137/1.9781611971224}}.

\bibitem{Campbell1980}
S.~L. Campbell.
\newblock {\em {S}ingular {S}ystems of {D}ifferential {E}quations}, volume~40
  of {\em Res. Notes Math.}
\newblock Pitman Advanced Publishing Program, London, 1980.
\newblock \href {https://doi.org/10.1002/nme.1620150916}
  {\path{doi:10.1002/nme.1620150916}}.

\bibitem{CamMR96}
S.~L. Campbell, C.~D. Meyer, and N.~J. Rose.
\newblock Applications of the {D}razin inverse to linear systems of
  differential equations with singular constant coefficients.
\newblock {\em Comput. Math. Appl.}, 31(3):411--425, 1976.
\newblock \href {https://doi.org/10.1137/0131035} {\path{doi:10.1137/0131035}}.

\bibitem{DruS11}
V.~Druskin and V.~Simoncini.
\newblock Adaptive rational {K}rylov subspaces for large-scale dynamical
  systems.
\newblock {\em Systems Control Lett.}, 60(8):546--560, 2011.
\newblock \href {https://doi.org/10.1016/j.sysconle.2011.04.013}
  {\path{doi:10.1016/j.sysconle.2011.04.013}}.

\bibitem{Eid2011}
R.~Eid, R.~Casta{\~{n}}{\'{e}}-Selga, H.~Panzer, T.~Wolf, and B.~Lohmann.
\newblock {S}tability-preserving parametric model reduction by matrix
  interpolation.
\newblock {\em Math. Comput. Model. Dyn. Syst.}, 17(4):319--335, 2011.
\newblock \href {https://doi.org/10.1080/13873954.2011.547671}
  {\path{doi:10.1080/13873954.2011.547671}}.

\bibitem{morFlaBG12}
G.~Flagg, C.~Beattie, and S.~Gugercin.
\newblock Convergence of the iterative rational {K}rylov algorithm.
\newblock {\em Systems Control Lett.}, 61(6):688--691, 2012.
\newblock \href {https://doi.org/10.1016/j.sysconle.2012.03.005}
  {\path{doi:10.1016/j.sysconle.2012.03.005}}.

\bibitem{Geuss2013}
M.~Geuss, H.~Panzer, and B.~Lohmann.
\newblock On parametric model order reduction by matrix interpolation.
\newblock In {\em Proceedings of the 12th European Control Conference}, pages
  3433--3438, Strasbourg, France, 2013.
\newblock \href {https://doi.org/10.23919/ECC.2013.6669829}
  {\path{doi:10.23919/ECC.2013.6669829}}.

\bibitem{Glo84}
K.~Glover.
\newblock All optimal {H}ankel-norm approximations of linear multivariable
  systems and their {L}$^\infty$ norms.
\newblock {\em Internat. J. Control}, 39:1115--1193, 1984.
\newblock \href {https://doi.org/10.1080/00207178408933239}
  {\path{doi:10.1080/00207178408933239}}.

\bibitem{morGosPA21b}
I.~V. Gosea, C.~Poussot-Vassal, and A.~C. Antoulas.
\newblock Data-driven modeling and control of large-scale dynamical systems in
  the {L}oewner framework: methodology and applications.
\newblock In E.~Zuazua and E.~Trelat, editors, {\em Numerical Control: Part A},
  Handb. Numer. Anal. Elsevier, 2021.

\bibitem{morGosZA20}
I.~V. Gosea, Q.~Zhang, and A.~C. Antoulas.
\newblock Preserving the {DAE} structure in the {L}oewner model reduction and
  identification framework.
\newblock {\em Adv. Comput. Math.}, 46:3, 2020.
\newblock \href {https://doi.org/10.1007/s10444-020-09752-8}
  {\path{doi:10.1007/s10444-020-09752-8}}.

\bibitem{morGugAB08}
S.~Gugercin, A.~C. Antoulas, and C.~Beattie.
\newblock {$\mathcal{H}_2$} model reduction for large-scale linear dynamical
  systems.
\newblock {\em {SIAM} J. Matrix Anal. Appl.}, 30(2):609--638, 2008.
\newblock \href {https://doi.org/10.1137/060666123}
  {\path{doi:10.1137/060666123}}.

\bibitem{morGugSW13}
S.~Gugercin, T.~Stykel, and S.~Wyatt.
\newblock Model reduction of descriptor systems by interpolatory projection
  methods.
\newblock {\em {SIAM} J. Sci. Comput.}, 35(5):B1010--B1033, 2013.
\newblock \href {https://doi.org/10.1137/130906635}
  {\path{doi:10.1137/130906635}}.

\bibitem{Ham82b}
S.~J. Hammarling.
\newblock Numerical solution of the stable, non-negative definite {L}yapunov
  equation.
\newblock {\em {IMA} J. Numer. Anal.}, 2:303--323, 1982.
\newblock \href {https://doi.org/10.1093/imanum/2.3.303}
  {\path{doi:10.1093/imanum/2.3.303}}.

\bibitem{morHeiSS08}
M.~Heinkenschloss, D.~C. Sorensen, and K.~Sun.
\newblock Balanced truncation model reduction for a class of descriptor systems
  with application to the {O}seen equations.
\newblock {\em {SIAM} J. Sci. Comput.}, 30(2):1038--1063, 2008.
\newblock \href {https://doi.org/10.1137/070681910}
  {\path{doi:10.1137/070681910}}.

\bibitem{morHesRS16}
J.~S. Hesthaven, G.~Rozza, and B.~Stamm.
\newblock {\em Certified {R}educed {B}asis {M}ethods for {P}arametrized
  {P}artial {D}ifferential {E}quations}.
\newblock SpringerBriefs Math. Springer, Cham, 2016.
\newblock \href {https://doi.org/10.1007/978-3-319-22470-1}
  {\path{doi:10.1007/978-3-319-22470-1}}.

\bibitem{HorJ91}
R.~A. Horn and C.~R. Johnson.
\newblock {\em Topics in Matrix Analysis}.
\newblock Cambridge University Press, Cambridge, 1991.
\newblock \href {https://doi.org/10.1017/CBO9780511840371}
  {\path{doi:10.1017/CBO9780511840371}}.

\bibitem{KunM06}
P.~Kunkel and V.~Mehrmann.
\newblock {\em Differential-Algebraic Equations: Analysis and Numerical
  Solution}.
\newblock Textbooks in Mathematics. EMS Publishing House, Z{\"u}rich,
  Switzerland, 2006.
\newblock ISBN: 3037190175.

\bibitem{Kue16}
P.~K{\"u}rschner.
\newblock {\em Efficient Low-Rank Solution of Large-Scale Matrix Equations}.
\newblock {D}issertation, Otto-von-Guericke-Universit{\"a}t, Magdeburg,
  Germany, April 2016.
\newblock URL: \url{http://hdl.handle.net/11858/00-001M-0000-0029-CE18-2}.

\bibitem{LanT85}
P.~Lancaster and M.~Tismenetsky.
\newblock {\em The Theory of Matrices}.
\newblock Academic Press, Orlando, 2nd edition, 1985.
\newblock ISBN: 9780124355606.

\bibitem{Mae96}
R.~M{\"a}rz.
\newblock Canonical projectors for linear differential algebraic equations.
\newblock {\em Comput. Math. Appl.}, 31(4/5):121--135, 1996.
\newblock \href {https://doi.org/10.1016/0898-1221(95)00224-3}
  {\path{doi:10.1016/0898-1221(95)00224-3}}.

\bibitem{MasOR17}
A.~Massoudi, M.~R. Opmeer, and T.~Reis.
\newblock The {ADI} method for bounded real and positive real {L}ur'e
  equations.
\newblock {\em Numer. Math.}, 135(2):431--458, 2017.
\newblock \href {https://doi.org/10.1007/s00211-016-0805-2}
  {\path{doi:10.1007/s00211-016-0805-2}}.

\bibitem{morMayA07}
A.~J. Mayo and A.~C. Antoulas.
\newblock A framework for the solution of the generalized realization problem.
\newblock {\em Linear Algebra Appl.}, 425(2--3):634--662, 2007.
\newblock \href {https://doi.org/10.1016/j.laa.2007.03.008}
  {\path{doi:10.1016/j.laa.2007.03.008}}.

\bibitem{morMehS05}
V.~Mehrmann and T.~Stykel.
\newblock Balanced truncation model reduction for large-scale systems in
  descriptor form.
\newblock In P.~Benner, V.~Mehrmann, and D.~C. Sorensen, editors, {\em
  Dimension Reduction of Large-Scale Systems}, volume~45 of {\em Lect. Notes
  Comput. Sci. Eng.}, pages 83--115. Springer, Berlin/Heidelberg, 2005.
\newblock \href {https://doi.org/10.1007/3-540-27909-1_3}
  {\path{doi:10.1007/3-540-27909-1_3}}.

\bibitem{morMoo81}
B.~C. Moore.
\newblock Principal component analysis in linear systems: controllability,
  observability, and model reduction.
\newblock {\em {IEEE} Trans. Automat. Control}, AC--26(1):17--32, 1981.
\newblock \href {https://doi.org/10.1109/TAC.1981.1102568}
  {\path{doi:10.1109/TAC.1981.1102568}}.

\bibitem{PanWL12}
H.~Panzer, T.~Wolf, and B.~Lohmann.
\newblock A strictly dissipative state space representation of second order
  systems.
\newblock {\em at-Auto\-mati\-sie\-rungs\-tech\-nik}, 60(7):392--397, 2012.
\newblock \href {https://doi.org/10.1524/auto.2012.1015}
  {\path{doi:10.1524/auto.2012.1015}}.

\bibitem{Pen00b}
T.~Penzl.
\newblock A cyclic low rank {S}mith method for large sparse {L}yapunov
  equations.
\newblock {\em {SIAM} J. Sci. Comput.}, 21(4):1401--1418, 2000.
\newblock \href {https://doi.org/10.1137/S1064827598347666}
  {\path{doi:10.1137/S1064827598347666}}.

\bibitem{PulNS21}
R.~Pulch, A.~Narayan, and T.~Stykel.
\newblock Sensitivity analysis of random linear differential--algebraic
  equations using system norms.
\newblock {\em J. Comput. Appl. Math.}, 397:113666, 2021.
\newblock \href {https://doi.org/10.1016/j.cam.2021.113666}
  {\path{doi:10.1016/j.cam.2021.113666}}.

\bibitem{morQuaMN16}
A.~Quarteroni, A.~Manzoni, and F.~Negri.
\newblock {\em Reduced Basis Methods for Partial Differential Equations},
  volume~92 of {\em La Matematica per il 3+2}.
\newblock Springer International Publishing, 2016.
\newblock ISBN: 978-3-319-15430-5.

\bibitem{SaaKB22-mmess-2.2}
J.~Saak, M.~K\"{o}hler, and P.~Benner.
\newblock {M-M.E.S.S.}-2.2 -- {The Matrix Equations Sparse Solvers} library,
  February 2022.
\newblock \href {https://doi.org/10.5281/zenodo.5938237}
  {\path{doi:10.5281/zenodo.5938237}}.

\bibitem{morSaaV18}
J.~Saak and M.~Voigt.
\newblock Model reduction of constrained mechanical systems in {M-M.E.S.S.}
\newblock {\em IFAC-PapersOnLine}, 51(2):661--666, 2018.
\newblock 9th Vienna International Conference on Mathematical Modelling,
  Vienna, 2018.
\newblock \href {https://doi.org/10.1016/j.ifacol.2018.03.112}
  {\path{doi:10.1016/j.ifacol.2018.03.112}}.

\bibitem{Schmidt2018}
A.~Schmidt, D.~Wittwar, and B.~Haasdonk.
\newblock {R}igorous and effective a-posteriori error bounds for nonlinear
  problems – {A}pplication to {RB} methods.
\newblock {\em Adv. Comput. Math.}, 46:32, 2020.
\newblock \href {https://doi.org/10.1007/s10444-020-09741-x}
  {\path{doi:10.1007/s10444-020-09741-x}}.

\bibitem{morMosSMV22}
P.~Schwerdtner, T.~Moser, V.~Mehrmann, and M.~Voigt.
\newblock Optimization-based model order reduction of port-{H}amiltonian
  descriptor systems.
\newblock {\em Systems Control Lett.}, 2023.
\newblock To appear.

\bibitem{SimD09}
V.~Simoncini and V.~Druskin.
\newblock Convergence analysis of projection methods for the numerical solution
  of large {L}yapunov equations.
\newblock {\em {SIAM} J. Numer. Anal.}, 47(2):828--843, 2009.
\newblock \href {https://doi.org/10.1137/070699378}
  {\path{doi:10.1137/070699378}}.

\bibitem{morSonGMS21}
N.~T. Son, P.-Y. Gousenbourger, E~Massart, and T.~Stykel.
\newblock Balanced truncation for parametric linear systems using interpolation
  of {G}ramians: a comparison of algebraic and geometric approaches.
\newblock In P.~Benner, T.~Breiten, H.~Faßbender, M.~Hinze, T.~Stykel, and
  R.~Zimmermann, editors, {\em Model Reduction of Complex Dynamical Systems},
  volume 171 of {\em Internat. Ser. Numer. Math.}, pages 31--51. Birkhäuser,
  Cham, 2021.
\newblock \href {https://doi.org/10.1007/978-3-030-72983-7_2}
  {\path{doi:10.1007/978-3-030-72983-7_2}}.

\bibitem{morSonS17}
N.~T. Son and T.~Stykel.
\newblock Solving parameter-dependent {L}yapunov equations using the reduced
  basis method with application to parametric model order reduction.
\newblock {\em {SIAM} J. Matrix Anal. Appl.}, 38(2):478--504, 2017.
\newblock \href {https://doi.org/10.1137/15M1027097}
  {\path{doi:10.1137/15M1027097}}.

\bibitem{SorZ02}
D.~C. Sorenson and Y.~Zhou.
\newblock Bounds on eigenvalue decay rates and sensitivity of solutions to
  {L}yapunov equations.
\newblock {CAAM} {T}echnical {R}eports TR02-07, Rice University, 2002.
\newblock Available at
  \url{https://repository.rice.edu/items/7d1381b1-adc7-4282-8486-430784cef15c}.

\bibitem{morSty04}
T.~Stykel.
\newblock Gramian-based model reduction for descriptor systems.
\newblock {\em Math. Control Signals Systems}, 16(4):297--319, 2004.
\newblock \href {https://doi.org/10.1007/s00498-004-0141-4}
  {\path{doi:10.1007/s00498-004-0141-4}}.

\bibitem{morSty06}
T.~Stykel.
\newblock Balanced truncation model reduction for semidiscretized {S}tokes
  equation.
\newblock {\em Linear Algebra Appl.}, 415(2--3):262--289, 2006.
\newblock \href {https://doi.org/10.1016/j.laa.2004.01.015}
  {\path{doi:10.1016/j.laa.2004.01.015}}.

\bibitem{Sty08}
T.~Stykel.
\newblock Low-rank iterative methods for projected generalized {L}yapunov
  equations.
\newblock {\em Electron. Trans. Numer. Anal.}, 30:187--202, 2008.
\newblock URL:
  \url{http://etna.mcs.kent.edu/vol.30.2008/pp187-202.dir/pp187-202.pdf}.

\bibitem{morTomP87}
M.~S. Tombs and I.~Postlethwaite.
\newblock Truncated balanced realization of a stable non-minimal state-space
  system.
\newblock {\em Internat. J. Control}, 46(4):1319--1330, 1987.
\newblock \href {https://doi.org/10.1080/00207178708933971}
  {\path{doi:10.1080/00207178708933971}}.

\bibitem{morTomBG18}
Z.~Tomljanovi{\'c}, C.~Beattie, and S.~Gugercin.
\newblock Damping optimization of parameter dependent mechanical systems by
  rational interpolation.
\newblock {\em Adv. Comput. Math.}, 44(6):1797--1820, 2018.
\newblock \href {https://doi.org/10.1007/s10444-018-9605-9}
  {\path{doi:10.1007/s10444-018-9605-9}}.

\bibitem{morTruTP19}
N.~Truhar, Z.~Tomljanovi{\'c}, and M.~Puva\v{c}a.
\newblock Approximation of damped quadratic eigenvalue problem by dimension
  reduction.
\newblock {\em Appl. Math. Comput.}, 347:40--53, 2019.
\newblock \href {https://doi.org/10.1016/j.amc.2018.10.047}
  {\path{doi:10.1016/j.amc.2018.10.047}}.

\bibitem{TruV09}
N.~Truhar and K.~Veseli{\'c}.
\newblock An efficient method for estimating the optimal dampers' viscosity for
  linear vibrating systems using {L}yapunov equation.
\newblock {\em {SIAM} J. Matrix Anal. Appl.}, 31(1):18--39, 2009.
\newblock \href {https://doi.org/10.1137/070683052}
  {\path{doi:10.1137/070683052}}.

\bibitem{morUgr20}
M.~Ugrica.
\newblock {\em {A}pproximation of Quadratic Eigenvalue Problem and Application
  to Damping Optimization}.
\newblock Doctoral thesis, University of Zagreb, Faculty of Science, Zagreb,
  Croatia, 2020.
\newblock \url{https://urn.nsk.hr/urn:nbn:hr:217:106554}.

\bibitem{WanZ92}
B.~Wang and F.~Zhang.
\newblock Some inequalities for the eigenvalues of the product of positive
  semidefinite {H}ermitian matrices.
\newblock {\em Linear Algebra Appl.}, 160:113--118, 1992.
\newblock \href {https://doi.org/10.1016/0024-3795(92)90442-D}
  {\path{doi:10.1016/0024-3795(92)90442-D}}.

\bibitem{ZhoDG96}
K.~Zhou, J.~C. Doyle, and K.~Glover.
\newblock {\em Robust and Optimal Control}.
\newblock Prentice-Hall, Upper Saddle River, NJ, 1996.
\newblock ISBN: 0-13-456567-3.

\end{thebibliography}

\end{document}